\def\@seccntDot{.}
\def\@seccntformat#1{\csname the#1\endcsname\@seccntDot\hskip 0.5em}
\renewcommand\section{\@startsection{section}{1}{\z@}%
{18\p@ \@plus 6\p@ \@minus 3\p@}%
{9\p@ \@plus 6\p@ \@minus 3\p@}%
{\large\bfseries\boldmath}}
\renewcommand\subsection{\@startsection{subsection}{2}{\z@}%
{15\p@ \@plus 6\p@ \@minus 3\p@}%
{6\p@ \@plus 6\p@ \@minus 3\p@}%
{\itshape}}
\renewcommand\subsubsection{\@startsection{subsubsection}{3}{\z@}%
{12\p@ \@plus 6\p@ \@minus 3\p@}%
{\p@}%
{}}
\theoremstyle{plain}
\newtheorem{theorem}{Theorem}[section]
\newtheorem{lemma}{Lemma}[section]
\newtheorem{problem}{Problem}[section]
\newtheorem{corollary}{Corollary}[section]
\newtheorem{proposition}{Proposition}[section]
\theoremstyle{definition}
\newtheorem{remark}{Remark}[section]
\numberwithin{equation}{section}
\DeclareMathOperator{\ex}{ex}
\DeclareMathOperator{\EX}{EX}
\DeclareMathOperator{\spex}{spex}
\DeclareMathOperator{\SPEX}{SPEX}
\title{Hypergraph Extensions of Spectral Tur\'an Theorem}
\author{Lele Liu\footnote{School of Mathematical Sciences, Anhui University, Hefei 230601, P.R. China. 
E-mail: \texttt{liu@ahu.edu.cn}.}~,~~ 
Zhenyu Ni\footnote{Department of Mathematics and Statistics, Hainan University, Haikou 570228, P.R. China. 
E-mail: \texttt{995264@hainanu.edu.cn}}~,~~
Jing Wang\footnote{College of Mathematics and Information Science, Henan Normal University, Xinxiang 453007, P.R. China. 
E-mail: \texttt{wj517062214@163.com}}~,~~
Liying Kang\footnote{Corresponding author. Department of Mathematics, Shanghai University,
Shanghai 200444, P.R. China. E-mail: \texttt{lykang@shu.edu.cn}.}~\footnote{Newtouch Center for Mathematics 
of Shanghai University, Shanghai 200444, P.R. China.}
}
\date{}
\begin{document}
\maketitle

\begin{abstract}
The spectral Tur\'an theorem states that the $k$-partite Tur\'an graph is the 
unique graph attaining the maximum adjacency spectral radius among all graphs 
of order $n$ containing no the complete graph $K_{k+1}$ as a subgraph. This 
result is known to be stronger than the classical Tur\'an theorem. 
In this paper, we consider hypergraph extensions of spectral Tur\'an theorem. 
For $k\geq r\geq 2$, let $H_{k+1}^{(r)}$ be the $r$-uniform hypergraph obtained from $K_{k+1}$ 
by enlarging each edge with a new set of $(r-2)$ vertices. 
Let $F_{k+1}^{(r)}$ be the $r$-uniform hypergraph with edges: $\{1,2,\ldots,r\} =: [r]$ and 
$E_{ij} \cup\{i,j\}$ over all pairs $\{i,j\}\in \binom{[k+1]}{2}\setminus\binom{[r]}{2}$, 
where $E_{ij}$ are pairwise disjoint $(r-2)$-sets disjoint from $[k+1]$. 
Generalizing the Tur\'an theorem to hypergraphs, Pikhurko 
[J. Combin. Theory Ser. B, 103 (2013) 220--225] and Mubayi and Pikhurko
[J. Combin. Theory Ser. B, 97 (2007) 669--678] respectively determined 
the exact Tur\'an number of $H_{k+1}^{(r)}$ and $F_{k+1}^{(r)}$, and  
characterized the corresponding extremal hypergraphs.

Our main results show that $T_r(n,k)$, the complete $k$-partite $r$-uniform 
hypergraph on $n$ vertices where no two parts differ by more than one in size, 
is the unique hypergraph having the maximum $p$-spectral radius among all 
$n$-vertex $H_{k+1}^{(r)}$-free (resp. $F_{k+1}^{(r)}$-free) $r$-uniform 
hypergraphs for sufficiently large $n$. These findings are obtained by 
establishing $p$-spectral version of the stability theorems. Our results 
offer $p$-spectral analogues of the results by Mubayi and Pikhurko, and connect 
both hypergraph Tur\'an theorem and hypergraph spectral Tur\'an theorem 
in a unified form via the $p$-spectral radius. In addition, by taking 
$p\to\infty$ in our findings, we obtain the corresponding edge extremal 
results, providing new proofs of Mubayi and Pikhurko’s results from spectral viewpoint.
\par\vspace{2mm}

\noindent{\bfseries Keywords:} Spectral Tur\'an problem; Hypergraph; $p$-spectral radius; Expanded clique; Generalized Fan; Stability.
\par\vspace{2mm}

\noindent{\bfseries AMS Classification:} 05C35; 05C50; 05C65.
\end{abstract}

\section{Introduction}
\label{sec1}

Consider an $r$-uniform hypergraph (or $r$-graph for brevity) $H$ and a family 
of $r$-graphs $\mathcal{F}$, we say $H$ is \emph{$\mathcal{F}$-free} if $H$ 
does not contain any member of $\mathcal{F}$ as a subgraph. The 
\emph{Tur\'an number} $\ex(n, \mathcal{F})$ is the maximum number of edges 
of an $\mathcal{F}$-free $r$-graph on $n$ vertices (if $\mathcal{F}$ is a 
single $r$-graph $F$, we write $\ex (n, F)$ instead of $\ex (n, \{F\})$). 
Let $\EX (n, \mathcal{F})$ denote the family of $n$-vertex $F$-free $r$-graphs with 
$\ex (n, \mathcal{F})$ edges. The \emph{Tur\'an density} of $\mathcal{F}$ is defined as 
\[
\pi(\mathcal{F}):= \lim_{n\to\infty} \frac{\ex(n, \mathcal{F})}{\binom{n}{r}}.
\]
The limit is known to exist, see Katona, Nemetz, and Simonovits \cite{Katona-Nemetz-Simonovits1964}.

Determining Tur\'an numbers of graphs and hypergraphs is one of the central 
problems in extremal combinatorics. The seminal contribution to this topic 
is the well-known Mantel's theorem \cite{Mantel1907}, established in 1907,
which says that every triangle free graph with $n$ vertices has at most 
$\lfloor n^2/4\rfloor$ edges. In 1941, Tur\'an \cite{Turan1941} extended
this result to $K_{t+1}$-free graphs, proving that the Tur\'an graph 
$T(n,t)$, a complete $t$-partite graph on $n$ vertices with the part 
sizes as equal as possible, is the unique $K_{t+1}$-free graph with the 
maximum number of edges. There are many extensions and generalizations on 
Tur\'an's result. The most celebrated extension attributes to the famous 
Erd\H{o}s-Stone-Simonovits Theorem \cite{Erdos-Stone1946,Erdos-Simonovits1966}, 
which states that 
\[
\ex (n, F) = \bigg(1 - \frac{1}{\chi(F) - 1} + o(1)\bigg) \frac{n^2}{2},
\]
where $\chi (F)$ is the chromatic number of $F$. Hence, for ordinary graphs 
the problem of determining $\ex (n, F)$ was asymptotically solved when $F$ is 
non-bipartite, while the case of bipartite graphs is still wide 
open (see the comprehensive survey \cite{Furedi-Simonovits2013}).
 
By contrast with the graph case, there is comparatively little understanding of 
the hypergraph Tur\'an number, even for very simple $r$-graphs. A central problem, 
originally posed by Tur\'an \cite{Turan1961}, is to determine $\pi(K_t^{(r)})$, 
where $t > r \geq 3$, and $K_t^{(r)}$ is the complete $r$-graph on $t$ vertices. 
This is a natural extension of determining the Tur\'an number of $K_t$ for graphs.  
To date, no case with $t > r \geq 3$ of this question has been solved, even asymptotically. 
A comprehensive survey of known bounds on $\ex (n, K_t^{(r)})$ was given by 
Sidorenko \cite{Sidorenko1995}, see also the earlier survey of de Caen \cite{Caen1994}. 
Although the Tur\'an problem for complete hypergraphs $K_t^{(r)}$ remains unresolved, 
there are specific hypergraphs for which the problem has been solved either asymptotically 
or exactly. We will explore these cases in detail in Subsection \ref{subsec1-4}.
\par\vspace{2mm}

The aim of this paper is to study spectral analogs of Tur\'an-type problems for 
hypergraphs, which is a natural extension of the graph case that has garnered 
interest in recent years.

\subsection{Spectral Tur\'an-type problem for graphs}

A significant class of extremal problems in spectral graph theory revolves around 
spectral analogues of classical Tur\'an-type problems. Although some initial findings 
existed, the systematic study of spectral Tur\'an-type problems was pioneered by 
Nikiforov \cite{Nikiforov2010}.

The study of spectral Tur\'an-type problems is interesting in its own right, it 
also has applications to studying classic Tur\'an-type problems. For example, 
Guiduli \cite{Guiduli1996} and Nikiforov \cite{Nikiforov2007} independently 
established a spectral analogue of the Tur\'an theorem \cite{Turan1941}, 
determining the maximum spectral radius of any $K_t$-free graph $G$ on $n$ vertices. 
By combining this result with the straightforward observation that the spectral 
radius provides an upper bound for the average degree of the graph, Guiduli \cite{Guiduli1996}
showed that the spectral Tur\'an theorem can imply the classical Tur\'an theorem.
Additionally, Nikiforov \cite{Nikiforov2010-2}, Babai and Guiduli \cite{Laszlo-Babai2009} 
established spectral versions of the K\H{o}v\'ari-S\'os-Tur\'an theorem for complete 
bipartite graphs $K_{s,t}$, yielding an upper bound that matches the best known upper 
bound for $\ex (n, K_{s,t})$ obtained by F\"uredi \cite{Furedi1996}. 

Recently, there has been an increase in publications focused on determining the maximum
spectral radius of graphs containing no copy of $F$ for various families of graphs $F$ 
have been published (see the surveys \cite{Nikiforov2011} and \cite{Li-Liu-Feng2022} for details) and 
this line of research has gained renewed interest. 
\par\vspace{3mm}

To state our results precisely, we need introduce the concept of $p$-spectral radius 
of hypergraphs in next subsection.

\subsection{$p$-spectral radius of hypergraphs}

For any real number $p\geq 1$, the $p$-spectral radius of hypergraphs was introduced 
by Keevash, Lenz and Mubayi \cite{Keevash-Lenz-Mubayi2014} and subsequently studied 
by Nikiforov \cite{Nikiforov2014}. Given an $r$-graph $H$ of order $n$, and a vector 
$\bm{x}=(x_1,x_2,\ldots,x_n)^{\mathrm{T}}\in\mathbb{R}^n$, denote
\[
P_H(\bm{x}) = r! \sum_{e\in E(H)} \prod_{v\in e} x_v.
\]
The \emph{$p$-spectral radius} of $H$ is defined as%
\begin{equation}\label{eq:definition-p-spectral-radius}
\lambda^{(p)}(H):= \max_{\|\bm{x}\|_p=1} P_H(\bm{x})
= \max_{\|\bm{x}\|_p=1} r! \sum_{e\in E(H)} \prod_{v\in e} x_v,
\end{equation}
where $\|\bm{x}\|_p:=(|x_1|^p+\cdots+|x_n|^p)^{1/p}$. If $\bm{x}\in\mathbb{R}^n$ is 
a vector with $\|\bm{x}\|_p=1$ such that $\lambda^{(p)}(H)=P_H(\bm{x})$, then $\bm{x}$ 
is called an \emph{eigenvector} corresponding to $\lambda^{(p)}(H)$.

For any real number $p\geq 1$, we denote by $\mathbb{S}_{p,+}^{n-1}$ the set of all 
nonnegative real vectors $\bm{x}\in\mathbb{R}^n$ with $\|\bm{x}\|_p=1$. If $\bm{x}\in\mathbb{R}^n$ 
is an eigenvector corresponding to $\lambda^{(p)}(H)$ with $\|\bm{x}\|_p = 1$, then 
the vector $\widetilde{\bm{x}} = (|x_1|, |x_2|, \ldots, |x_n|)^{\mathrm{T}}$ also 
satisfies $\|\widetilde{\bm{x}}\|_p = 1$ and 
\[
\lambda^{(p)} (H) = P_H(\bm{x}) \leq P_H(\widetilde{\bm{x}}) \leq \lambda^{(p)} (H),
\]
yielding that $\lambda^{(p)} (H) = P_H(\widetilde{\bm{x}})$. Hence, there is always 
a vector $\bm{x}\in\mathbb{S}_{p,+}^{n-1}$ such that $\lambda^{(p)} (H) = P_H(\bm{x})$.

By Lagrange's method, we have the eigenvalue\,--\,eigenvector equation for $\lambda^{(p)}(H)$ 
and $\bm{x}\in\mathbb{S}_{p,+}^{n-1}$ as follows:
\begin{equation}\label{eq:eigenequation}
\lambda^{(p)}(H)x_i^{p-1} = (r-1)! \sum_{\{i,i_2,\ldots,i_r\}\in E(H)}x_{i_2}\cdots x_{i_r}
~~\text{for}\ x_i>0.
\end{equation}

\begin{remark}
It is worth mentioning that the $p$-spectral radius $\lambda^{(p)}(H)$ shows remarkable 
connections with some hypergraph invariants. For instance, the quantity $\lambda^{(1)}(H)$ 
is the Lagrangian of $H$, the quantity $\lambda^{(2)}(H)$ is the notion of hypergraph 
spectral radius introduced by Friedman and Wigderson \cite{Friedman-Wigderson1995}, 
$\lambda^{(r)}(H)/(r-1)!$ is the usual spectral radius introduced by Cooper and Dutle \cite{Cooper2012}, 
$\lambda^{(\infty)}(H)/r!$ is the number of edges of $H$ (see \cite[Proposition 2.10]{Nikiforov2014}).
\end{remark}

\subsection{Spectral Tur\'an-type problem for hypergraphs}

In this paper, we focus our attention on the following problem which is a natural extension 
of graph case. 

\begin{problem}\label{problem:spectral-Turan}
Let $p > r-1 \geq 2$, and $\mathcal{F}$ be a family of $r$-graphs. What is the maximum $p$-spectral 
radius of an $r$-graph $H$ on $n$ vertices without no member of $\mathcal{F}$ as a subgraph?
\end{problem}

The motivation for investigating this problem, in addition to the reasons mentioned 
earlier for the graph case, stems from the opportunity to address both classical 
Tur\'an-type problems and spectral Tur\'an-type problems in a unified manner via the $p$-spectral radius. 

There are a few results on Problem \ref{problem:spectral-Turan} for $r>2$. 
Keevash, Lenz and Mubayi \cite{Keevash-Lenz-Mubayi2014} 
pioneered the study of the problem by introducing two general criteria that can be 
applied to obtain a variety of spectral Tur\'an-type results. In particular, they 
determined the maximum $p$-spectral radius of any $3$-graph on $n$ vertices not 
containing the Fano plane when $n$ is sufficiently large. Additionally, in the same 
paper they also obtained a $p$-spectral version of the Erd\H os-Ko-Rado theorem on 
$t$-intersecting $r$-graphs. Extending the spectral Mantel's theorem to hypergraphs, 
Ni, Liu and Kang \cite{Ni-Liu-Kang2024} determined the maximum $p$-spectral radius 
of $\{F_4, F_5\}$-free $3$-graphs, and characterized the extremal hypergraph, 
where $F_4 = \{abc, abd, bcd\}$ and $F_5 = \{abc, abd, cde\}$. 

Research has also been conducted on spectral Tur\'an-type problems over specific 
classes of hypergraphs. One such class is linear hypergraphs where any two 
distinct edges intersect at most one vertex. Gao, Chang and Hou \cite{GaoChangHou2022} 
studied the spectral extremal problem for $K_{r+1}^+$-free $r$-graphs among linear 
hypergraphs, where $K_{r+1}^+$ is the $r$-expansion of the complete graph $K_{r+1}$, 
i.e., $K_{r+1}^+$ is obtained from $K_{r+1}$ by enlarging each edge of $K_{r+1}$ with 
$(r - 2)$ new vertices disjoint from $V(K_{r+1})$ such that distinct edges of $K_{r+1}$ 
are enlarged by distinct vertices. They proved that the spectral radius of an $n$-vertices
$K_{r+1}^+$-free linear $r$-graph is no more than $n/r$ when $n$ is sufficiently large.
Generalizing Gao, Chang and Hou's result, She, Fan, Kang and Hou \cite{She-Fan-Kang-Hou2023} 
presented sharp (or asymptotic) bounds of the spectral radius of $F^+$-free linear 
$r$-graphs by establishing the connection between the spectral radius of linear 
hypergraphs and those of their shadow graphs, 
where $F$ is a graph with chromatic number $k$ with $k \geq r +1$. Wang and 
Yu \cite{Wang-Yu2023} obtain some upper bounds on the spectral radius of linear 
$3$-graphs without containing Berge-cycle as a subgraph. Zhou, Yuan and 
Wang \cite{Zhou-Yuan-Wang2024} obtain some bounds for the spectral 
radius of connected Berge-$K_{3,t}$-free linear $r$-graphs. Another recent result, 
due to Ellingham, Lu and Wang \cite{Ellingham-Lu-Wang2022}, showed that the 
$n$-vertex outerplanar $3$-graph of maximum spectral radius is the unique $3$-graph 
whose shadow graph is the join of an isolated vertex and the path $P_{n-1}$. 
Very recently, Cooper, Desai, and Sahay \cite{Cooper-Desai-Sahay2024} 
investigated the eigenvectors associated with the $p$-spectral radius of 
extremal hypergraphs for a general class of hypergraph spectral Tur\'an-type 
problems. Their findings revealed in a strong sense that these eigenvectors 
have close to equal weight on each vertex.

\subsection{Main results}
\label{subsec1-4}

Let $k\geq r\geq 2$. An $r$-graph $H$ is called \emph{$k$-partite} if its vertex set 
$V(H)$ can be partitioned into $k$ sets so that each edge contains at most one vertex 
from each set. An edge maximal $k$-partite $r$-graph is called \emph{complete $k$-partite}. 
Let $T_r(n,k)$ denote the complete $k$-partite $r$-graph on $n$ vertices where no 
two parts differ by more than one in size. We write $t_r(n,k)$ for the number of edges 
of $T_r(n,k)$. That is,
\begin{equation}\label{eq:size-T(n,k)}
t_r(n,k) = \sum_{S\in\binom{[k]}{r}} \prod_{i\in S} \bigg\lfloor \frac{n+i-1}{k}\bigg\rfloor 
= \big(1 - O(n^{-1})\big) \cdot \frac{(k)_r}{k^r} \binom{n}{r},
\end{equation}
where $[k]$ denotes the set $\{1,2,\ldots,k\}$, and $(k)_{r}$ 
denotes the falling factorial $k(k-1)\cdots (k-r+1)$.
 
Mubayi \cite{Mubayi2006} considered the Tur\'an problem for the following family of $r$-graphs.
For $t\geq r + 1$, let $\mathcal{K}_t^{(r)}$ be the family of all $r$-graphs $F$ with 
at most $\binom{t}{2}$ edges such that for some $t$-set $C$ (called the \emph{core}) every 
pair $u,v\in C$ is covered by an edge of $F$. When $r = 2$, the family $\mathcal{K}_t^{(r)}$ 
reduces to $K_t$, the usual complete graph, however when $r>2$, it contains more than one $r$-graph.
Let the $r$-graph $H_t^{(r)}\in \mathcal{K}_t^{(r)}$ be obtained from the complete $2$-graph 
$K_t$ by enlarging each edge with a new set of $(r-2)$ vertices. Mubayi \cite{Mubayi2006} 
showed that $\ex (n, \mathcal{K}_{k+1}^{(r)}) = t_r(n,k)$ with the unique extremal
$r$-graph being $T_r(n,k)$. Mubayi further established structural stability of near-extremal 
$\mathcal{K}_{k+1}^{(r)}$-free $r$-graphs. Using this stability property, Pikhurko \cite{Pikhurko2013} 
later strengthened Mubayi's result to show that $\ex (n, H_{k+1}^{(r)}) = t_r(n,k)$ for 
all sufficiently large $n$. For more results on Tur\'an number for $r$-graphs, we refer 
the reader to the surveys \cite{Keevash2011} and \cite{Mubayi-Verstraete2016}.

Our first main result provides a spectral analogue of Pikhurko's result.

\begin{theorem}\label{thm:main-1}
For any $p\geq r\geq 2$, there is $n_0$ such that for any $H_{k+1}^{(r)}$-free $r$-graph
$H$ on $n>n_0$ vertices, $\lambda^{(p)} (H) \leq \lambda^{(p)} (T_r(n,k))$,
with equality if and only if $H \cong T_r(n,k)$.
\end{theorem}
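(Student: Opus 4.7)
The plan is to establish a $p$-spectral version of Mubayi's stability theorem and then combine it with the exact edge-extremal result of Pikhurko. Substituting the uniform vector $x_v = n^{-1/p}$ into the variational formula \eqref{eq:definition-p-spectral-radius} yields the baseline bound $\lambda^{(p)}(T_r(n,k)) \geq r!\, t_r(n,k)\, n^{-r/p}$. Suppose for contradiction that $H$ is an $H_{k+1}^{(r)}$-free $r$-graph on $n$ vertices with $\lambda^{(p)}(H) \geq \lambda^{(p)}(T_r(n,k))$, and let $\mathbf{x} \in \mathbb{S}_{p,+}^{n-1}$ denote a corresponding eigenvector.

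The first main step is to show that $H$ is edge-close to extremal. Using a H\"older-type estimate (valid for $p \geq r$) in the variational formula, one deduces the reverse bound $\lambda^{(p)}(H) \leq r!\, e(H)\, n^{-r/p}\,(1+o(1))$, which together with the previous inequality gives $e(H) \geq (1-o(1))\, t_r(n,k)$. Since $H_{k+1}^{(r)} \in \mathcal{K}_{k+1}^{(r)}$, Mubayi's edge-stability result for $\mathcal{K}_{k+1}^{(r)}$-free hypergraphs then produces a vertex partition $V(H) = V_1 \cup \cdots \cup V_k$ with $|V_i| = n/k + o(n)$ such that all but $o(n^r)$ edges of $H$ are transversal with respect to this partition.

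Next, I would exploit the eigenequation \eqref{eq:eigenequation} to upgrade this approximate structural information into exact information. Bounding $x_v^{p-1}$ above and below in terms of the vertex degree via \eqref{eq:eigenequation} and H\"older's inequality shows that $c\, n^{-1/p} \leq x_v \leq C\, n^{-1/p}$ for absolute constants $c,C$, and that, within each $V_i$, the coordinates of $\mathbf{x}$ are nearly equal up to lower-order error. Let $B$ denote the (small) set of ``bad'' vertices that are incident to a non-transversal edge; I would show, using an averaging argument on eigenvector-weighted degrees combined with the $H_{k+1}^{(r)}$-free hypothesis, that if $B$ is nonempty then a forbidden copy of $H_{k+1}^{(r)}$ must appear. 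Indeed, each bad vertex retains many private transversal common neighborhoods across the remaining parts, and the required $(r-2)$-vertex expansions of the putative $K_{k+1}$ can be chosen greedily from the unused vertices. This is the hypergraph analogue of Pikhurko's embedding lemma, adapted to the eigenvector-weighted setting.

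The main obstacle lies precisely in this last step, namely promoting an approximate structural conclusion to the exact one $H = T_r(n,k)$, since the spectral perturbation needed to forbid even a single misplaced edge must be controlled through the nonlinear eigenequation involving $x_v^{p-1}$; a careful second-order comparison between $H$ and the hypothetical $T_r(n,k)$-perturbation is needed to strictly rule out a nonempty $B$. Once $H$ is reduced to a complete $k$-partite $r$-graph with part sizes $n_1, \ldots, n_k$, a Maclaurin-type symmetric-function inequality applied with the uniform test vector identifies $T_r(n,k)$ as the unique maximizer of $\lambda^{(p)}$, which finishes the argument.
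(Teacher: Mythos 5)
There are genuine gaps at both ends of your argument. First, the inequality you use to get near-extremality, $\lambda^{(p)}(H) \leq r!\,e(H)\,n^{-r/p}(1+o(1))$, is false for general $r$-graphs: an eigenvector of a sparse-but-locally-dense hypergraph (e.g.\ a clique on $m\ll n$ vertices plus isolated vertices) concentrates its weight and beats the uniform-vector estimate by a large factor. The only bound available without structural input is $\lambda^{(p)}(H)\leq (r!\,e(H))^{1-1/p}$ (Lemma~\ref{lem:size-upper-bound}), and that alone does \emph{not} yield $e(H)\geq (1-o(1))t_r(n,k)$; the paper closes this hole by inserting the Lagrangian density bound $\pi_{\lambda}\leq (k)_r/k^r$ (Proposition~\ref{prop:Lagrangian-density-1}, via the Frankl--R\"odl support lemma and Maclaurin) into a Power-Mean estimate, which gives $\lambda^{(p)}(H)\leq (r!\,e(H))^{1-1/p}\big((k)_r/k^r\big)^{1/p}$ and hence the near-extremal edge count. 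Second, your appeal to Mubayi's stability has the containment backwards: since $H_{k+1}^{(r)}\in\mathcal{K}_{k+1}^{(r)}$, being $\mathcal{K}_{k+1}^{(r)}$-free is \emph{stronger} than being $H_{k+1}^{(r)}$-free, so an $H_{k+1}^{(r)}$-free $H$ need not be $\mathcal{K}_{k+1}^{(r)}$-free and Mubayi's theorem does not apply directly. One must first delete all edges containing a pair of codegree at most $d=O(n^{r-3})$, check (Pikhurko's observation) that the cleaned hypergraph is $\mathcal{K}_{k+1}^{(r)}$-free, and verify via Weyl's inequality plus Lemma~\ref{lem:size-upper-bound} that this deletion of $O(n^{r-1})$ edges barely changes $\lambda^{(p)}$; this is exactly the content of Theorem~\ref{thm:spectral-stability-expanded-clique}, which your sketch skips.

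The exact step is also not established. You yourself flag ruling out the bad set $B$ as ``the main obstacle,'' but the averaging/second-order comparison you gesture at is not supplied, and some of the auxiliary claims you lean on are not available: a two-sided bound $c\,n^{-1/p}\leq x_v\leq C\,n^{-1/p}$ for \emph{all} vertices cannot hold in general (coordinates can vanish), and the paper only uses $x_z\geq n^{-1/p}$ for the maximum coordinate. The paper's mechanism is a Zykov-type symmetrization: for a bad vertex $u$ with $x_u>0$, replace all edges at $u$ by shifted copies of the ``clean'' link of the maximum-weight vertex $z$ (edges avoiding sparse pairs and $L$), show the new hypergraph is still $H_{k+1}^{(r)}$-free, and show $\lambda^{(p)}$ strictly increases by comparing the degree upper bound for vertices of $L$ (Lemma~\ref{lem:degree-vertex-in-L}) with the degree lower bound for $z$ (Lemma~\ref{lem:degree-of-z}); this contradiction forces $L=\emptyset$, after which the conclusion follows from the Kang--Nikiforov--Yuan theorem that $T_r(n,k)$ uniquely maximizes $\lambda^{(p)}$ among $k$-partite $r$-graphs (Lemma~\ref{lem:k-partite-upper-bound}) --- a result you would also need, since a uniform test vector only gives lower bounds and cannot by itself identify the balanced partition as the unique spectral maximizer. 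Finally, note that Pikhurko's exact edge theorem, announced in your first sentence, is never actually usable here: $e(H)\leq t_r(n,k)$ does not imply $\lambda^{(p)}(H)\leq\lambda^{(p)}(T_r(n,k))$, and indeed the paper deliberately avoids it, deriving it instead as a corollary by letting $p\to\infty$.
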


The cornerstone of our proof relies on a spectral version of stability result
(Theorem \ref{thm:spectral-stability-expanded-clique}), which says roughly that 
a near-extremal (with respect to $p$-spectral radius) $n$-vertex $H_{k+1}^{(r)}$-free 
$r$-graphs $H$ must be structurally close to $T_r(n,k)$. The stability method has 
been demonstrated to be a powerful tool for addressing Tur\'an-type problems.
It was first used by Simonovits in \cite{Simonovits1968} to determine $\ex (n,F)$
exactly for all color-critical graphs and large $n$, and then by several authors
to prove exact results for hypergraphs (see \cite{Keevash2011} for details). 
Theorem \ref{thm:spectral-stability-expanded-clique} provides a spectral 
counterpart of stability result for $H_{k+1}^{(r)}$. It seems that spectral 
version of stability theorems may play a valuable role in studying of spectral 
Tur\'an-type problems for hypergraphs.

\begin{remark}
If $r = 2$, Theorem \ref{thm:main-1} corresponds to the spectral Tur\'an theorem 
with respect to $p$-spectral radius, as established by Kang and Nikiforov \cite{Kang-Nikiforov2014} 
in 2014. 
\end{remark}

Since $H_{k+1}^{(r)}$ is a member of $\mathcal{K}_{k+1}^{(r)}$ and $T_r(n,k)$ is 
$\mathcal{K}_{k+1}^{(r)}$-free, applying Theorem \ref{thm:main-1} yields the following 
result directly.

\begin{corollary}
Let $p\geq r\geq 2$. Suppose that $H$ is a $\mathcal{K}_{k+1}^{(r)}$-free 
$r$-graph on $n$ vertices, then $\lambda^{(p)} (H) \leq \lambda^{(p)} (T_r(n,k))$ for 
sufficiently large $n$, with equality if and only if $H \cong T_r(n,k)$.
\end{corollary}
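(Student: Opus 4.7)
The plan is to derive the corollary as a direct consequence of \autoref{thm:main-1} by checking two containments between the relevant families of forbidden hypergraphs and extremal hypergraphs. The argument is essentially a two-line reduction, so no new machinery is needed beyond what is already established.

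First I would record the key membership $H_{k+1}^{(r)}\in\mathcal{K}_{k+1}^{(r)}$: by its very construction $H_{k+1}^{(r)}$ is the $r$-graph built from the complete graph $K_{k+1}$ by enlarging each edge with a disjoint $(r-2)$-set, so the vertex set of $K_{k+1}$ serves as a core $C$ of size $k+1$ in which every pair $\{u,v\}\in\binom{C}{2}$ is covered by an edge of $H_{k+1}^{(r)}$, and the total number of edges is exactly $\binom{k+1}{2}$. Consequently, if an $r$-graph $H$ is $\mathcal{K}_{k+1}^{(r)}$-free, it is in particular $H_{k+1}^{(r)}$-free. Applying \autoref{thm:main-1} to $H$ then gives $\lambda^{(p)}(H)\leq\lambda^{(p)}(T_r(n,k))$ for all $n>n_0$.

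Next I would verify that $T_r(n,k)$ itself is $\mathcal{K}_{k+1}^{(r)}$-free, so that the bound is actually attained within the smaller class. Suppose for contradiction some $F\in\mathcal{K}_{k+1}^{(r)}$ embeds into $T_r(n,k)$; then its core $C$ has $k+1$ vertices inside the $k$-partite hypergraph $T_r(n,k)$, and by pigeonhole two vertices $u,v\in C$ must lie in the same part. Since no edge of $T_r(n,k)$ contains two vertices of the same part, the pair $\{u,v\}$ cannot be covered by any edge of the embedded $F$, contradicting the definition of a core. Hence $T_r(n,k)$ is a legitimate member of the class under consideration.

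Combining the two observations, $T_r(n,k)$ is a $\mathcal{K}_{k+1}^{(r)}$-free $r$-graph, and every $\mathcal{K}_{k+1}^{(r)}$-free $H$ on $n$ vertices satisfies $\lambda^{(p)}(H)\leq\lambda^{(p)}(T_r(n,k))$ by \autoref{thm:main-1}. For the equality case, note that any extremizer $H$ is $H_{k+1}^{(r)}$-free, so the uniqueness clause of \autoref{thm:main-1} forces $H\cong T_r(n,k)$. There is no real obstacle in this deduction; the only points requiring care are the two structural verifications above, both of which follow from definitions.
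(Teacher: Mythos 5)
Your proposal is correct and follows essentially the same route as the paper, which deduces the corollary directly from \autoref{thm:main-1} by noting that $H_{k+1}^{(r)}\in\mathcal{K}_{k+1}^{(r)}$ (so $\mathcal{K}_{k+1}^{(r)}$-free implies $H_{k+1}^{(r)}$-free) and that $T_r(n,k)$ is itself $\mathcal{K}_{k+1}^{(r)}$-free. Your explicit verifications of these two facts are accurate and merely spell out what the paper leaves implicit.
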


Let $G$ be a graph and $H$ be a hypergraph. The hypergraph $H$ is a \emph{Berge-$G$} 
if there is a bijection $\phi: E(G)\to E(H)$ such that $e\subseteq \phi(e)$ for all 
$e\in E(G)$. Alternatively, $H$ is a Berge-$G$ if we can embed each edge of $G$ into 
a unique edge of $H$.

\begin{corollary}
Let $p\geq r\geq 2$. Suppose that $H$ is a Berge-$K_{k+1}$-free $r$-graph
on $n$ vertices, then $\lambda^{(p)} (H) \leq \lambda^{(p)} (T_r(n,k))$ for 
sufficiently large $n$, with equality if and only if $H \cong T_r(n,k)$.
\end{corollary}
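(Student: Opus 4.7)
The plan is to reduce the corollary to \autoref{thm:main-1} by showing that the class of Berge-$K_{k+1}$-free $r$-graphs is contained in the class of $H_{k+1}^{(r)}$-free $r$-graphs, and that $T_r(n,k)$ itself lies in the Berge-$K_{k+1}$-free class. First I would record the key structural observation: the $r$-graph $H_{k+1}^{(r)}$ is itself a Berge-$K_{k+1}$. Indeed, by its very definition $H_{k+1}^{(r)}$ is obtained from $K_{k+1}$ by enlarging each edge $\{i,j\}\in\binom{[k+1]}{2}$ with a new disjoint $(r-2)$-set; the map $\phi$ sending $\{i,j\}\in E(K_{k+1})$ to the corresponding enlarged edge of $H_{k+1}^{(r)}$ is a bijection with $\{i,j\}\subseteq \phi(\{i,j\})$, which is precisely the definition of a Berge-$K_{k+1}$ witness.

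Consequently, if an $r$-graph $H$ contains $H_{k+1}^{(r)}$ as a subgraph, then composing the subgraph inclusion with the witness $\phi$ above produces a Berge-$K_{k+1}$ in $H$. Taking contrapositives, every Berge-$K_{k+1}$-free $r$-graph is $H_{k+1}^{(r)}$-free. Therefore \autoref{thm:main-1} applies and gives $\lambda^{(p)}(H)\leq \lambda^{(p)}(T_r(n,k))$ for all sufficiently large $n$.

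For the equality case, I would verify that $T_r(n,k)$ is Berge-$K_{k+1}$-free, so that the hypothesis $H\cong T_r(n,k)$ is admissible and the characterization in \autoref{thm:main-1} transfers verbatim. This is immediate: in a complete $k$-partite $r$-graph every edge meets each part in at most one vertex, so any two vertices lying in the same part are not jointly covered by any edge. Given $k+1$ vertices in $T_r(n,k)$, the pigeonhole principle forces two of them, say $v_i$ and $v_j$, into the same part; no edge can contain both, so no Berge-$K_{k+1}$ with core $\{v_1,\dots,v_{k+1}\}$ can exist. Hence $T_r(n,k)$ is Berge-$K_{k+1}$-free, and \autoref{thm:main-1} shows that equality $\lambda^{(p)}(H)=\lambda^{(p)}(T_r(n,k))$ is attained precisely when $H\cong T_r(n,k)$.

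There is no genuine obstacle here: the entire corollary is a one-line consequence of \autoref{thm:main-1}, with the only content being the two sanity checks above (that $H_{k+1}^{(r)}$ is itself a Berge-$K_{k+1}$, and that $T_r(n,k)$ is Berge-$K_{k+1}$-free). If any subtlety arises, it would be in confirming that the definition of Berge-$G$ used in this paper matches the bijection-based formulation; once that is settled, the argument is purely definitional.
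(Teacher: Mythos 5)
Your proposal is correct and matches the paper's (implicit) derivation: the corollary is stated as a direct consequence of Theorem \ref{thm:main-1}, resting exactly on the two observations you verify, namely that $H_{k+1}^{(r)}$ is itself a Berge-$K_{k+1}$ (so Berge-$K_{k+1}$-freeness implies $H_{k+1}^{(r)}$-freeness) and that $T_r(n,k)$ is Berge-$K_{k+1}$-free (so the equality characterization transfers).
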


Note that for any fixed $r$-graph $H$, the $p$-spectral radius $\lambda^{(p)} (H)$
is a continuous function in $p$. Thus, by letting $p\to\infty$ in Theorem \ref{thm:main-1}, 
we immediately obtain Pikhurko's result.

\begin{corollary}[\cite{Pikhurko2013}]\label{coro:Pikhurko2013}
For any $k\geq r$, there is $n_0$ such that for any $H_{k+1}^{(r)}$-free $r$-graph
$H$ on $n>n_0$ vertices, $e(H) \leq t_r(n,k)$, with equality if and only if $H \cong T_r(n,k)$.
\end{corollary}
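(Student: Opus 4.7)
The plan is to deduce Corollary~\ref{coro:Pikhurko2013} from Theorem~\ref{thm:main-1} by letting $p\to\infty$, using the identity $\lambda^{(\infty)}(H)=r!\cdot e(H)$ observed in the excerpt together with the continuity of $p\mapsto\lambda^{(p)}(H)$ for each fixed $r$-graph $H$.

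Given an $H_{k+1}^{(r)}$-free $r$-graph $H$ on $n$ vertices (with $n$ sufficiently large), I would invoke Theorem~\ref{thm:main-1} at every admissible $p\geq r$ to obtain
\[
\lambda^{(p)}(H)\;\leq\;\lambda^{(p)}(T_r(n,k)).
\]
Passing to the limit $p\to\infty$ on both sides, the left-hand side tends to $r!\,e(H)$ and the right-hand side to $r!\,t_r(n,k)$, yielding $e(H)\leq t_r(n,k)$, which is the desired edge bound.

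For the uniqueness, suppose $e(H)=t_r(n,k)$ and $H\not\cong T_r(n,k)$. Then the equality clause of Theorem~\ref{thm:main-1} gives the strict inequality $\lambda^{(p)}(H)<\lambda^{(p)}(T_r(n,k))$ for each admissible $p$. To extract a contradiction, I would combine this with the uniform-vector lower bound $\lambda^{(p)}(H)\geq r!\,e(H)/n^{r/p}=r!\,t_r(n,k)/n^{r/p}$, obtained by evaluating $P_H$ at the constant vector $\mathbf{x}=(n^{-1/p},\ldots,n^{-1/p})\in\mathbb{S}_{p,+}^{n-1}$, together with the explicit value $\lambda^{(p)}(T_r(n,k))=r!\,t_r(n,k)/n^{r/p}$. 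This value holds exactly when $k\mid n$ by the symmetry of $T_r(n,k)$ under its automorphism group, and it holds up to a multiplicative factor $1+o_p(1)$ when $k\nmid n$. The two bounds squeeze together and rule out the strict inequality, forcing $H\cong T_r(n,k)$.

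The principal obstacle is the interplay between the quantifiers: Theorem~\ref{thm:main-1} only provides a threshold $n_0(p)$ that may grow with $p$, so when passing to $p\to\infty$ for a fixed $n$ one must ensure $n>n_0(p)$ remains valid along a suitable sequence. This is resolved by extracting an effective, polynomial-in-$p$ bound on $n_0(p)$ from the stability-based proof of Theorem~\ref{thm:main-1}, after which one may take $p=p(n)\to\infty$ slowly enough with $n$ for the hypothesis to persist. With this uniformity in hand, both the edge inequality and the uniqueness of $T_r(n,k)$ follow for all sufficiently large $n$.
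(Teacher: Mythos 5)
Your route is the same as the paper's: Corollary~\ref{coro:Pikhurko2013} is deduced from Theorem~\ref{thm:main-1} by letting $p\to\infty$ and using that $\lambda^{(p)}(H)\to r!\,e(H)$ for each fixed $H$, which is exactly how the paper obtains the edge bound $e(H)\le t_r(n,k)$. Your concern about the order of quantifiers is fair (the paper passes over it silently), but your proposed fix is shaky: if you couple $p=p(n)$ to $n$, the chain $r!\,e(H)\,n^{-r/p}\le\lambda^{(p)}(H)\le\lambda^{(p)}(T_r(n,k))\le(r!\,t_r(n,k))^{1-1/p}$ only yields $e(H)\le t_r(n,k)\cdot C^{1/p}$ with $C=C(k,r)$ bounded, and removing the factor $C^{1/p}$ by integrality requires $p$ of order $n^{r}$, which a ``slowly growing'' $p(n)$ --- or a generic polynomial bound on $n_0(p)$ --- does not supply. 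The clean repair is to check that $n_0$ in Theorem~\ref{thm:main-1} can be chosen independently of $p$ for $p\ge r$ (the estimates in Section~\ref{sec4} are uniform in this range), after which $p\to\infty$ can be taken with $n$ fixed.

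The genuine gap is your uniqueness step. The squeeze you describe works only when $k\mid n$. If $k\nmid n$, the parts of $T_r(n,k)$ are unequal, the constant vector is not a critical point of $P_{T_r(n,k)}$ on the $p$-sphere (vertices in smaller parts have larger degree), and hence $\lambda^{(p)}(T_r(n,k))>r!\,t_r(n,k)/n^{r/p}$ strictly for every finite $p$. The chain $r!\,t_r(n,k)/n^{r/p}\le\lambda^{(p)}(H)<\lambda^{(p)}(T_r(n,k))$ therefore has genuine slack and yields no contradiction; and the weaker $1+o_p(1)$ version cannot help, because two quantities tending to the same limit as $p\to\infty$ is perfectly consistent with a strict inequality between them at every finite $p$ --- a squeeze in the limit never contradicts strictness at finite $p$. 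So your argument proves the implication $e(H)=t_r(n,k)\Rightarrow H\cong T_r(n,k)$ only for $n$ divisible by $k$. (To be fair, the paper's one-line derivation is equally silent on the equality case; recovering uniqueness from the spectral theorem needs an additional idea, for instance rerunning the stability-plus-cleaning argument of Section~\ref{sec4} for an edge-extremal $H$, or a quantitative version of the strict inequality in Theorem~\ref{thm:main-1}, rather than a bare limit.)
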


The Tur\'an number $\ex (n, Berge\text{-}K_{k+1})$ of Berge-clique can be derived 
from Theorem \ref{thm:main-1} by taking $p\to\infty$. This finding extends the 
corresponding results in \cite{Gerbner-Methuku-Palmer2020,Gyarfas2019,Gyori2006,Maherani-Shahsiah2018} 
when $n$ is sufficiently large.

\begin{corollary}
Let $H$ be a Berge-$K_{k+1}$-free $r$-graph on $n$ vertices. Then $e(H) \leq t_r(n,k)$ 
for sufficiently large $n$, with equality if and only if $H \cong T_r(n,k)$.
\end{corollary}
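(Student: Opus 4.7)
The plan is to reduce this corollary directly to Corollary \ref{coro:Pikhurko2013}, which was itself obtained from Theorem \ref{thm:main-1} by sending $p\to\infty$. The key structural observation is that $H_{k+1}^{(r)}$ is itself a Berge-$K_{k+1}$: by its very construction, $H_{k+1}^{(r)}$ has exactly $\binom{k+1}{2}$ hyperedges, one for each edge $\{i,j\}$ of $K_{k+1}$ on vertex set $[k+1]$, and that hyperedge contains the pair $\{i,j\}$. The identity map on edge sets therefore furnishes the bijection $\phi$ required by the definition of Berge-$G$. It follows that any $r$-graph containing a copy of $H_{k+1}^{(r)}$ also contains a Berge-$K_{k+1}$, so contrapositively, every Berge-$K_{k+1}$-free $r$-graph is automatically $H_{k+1}^{(r)}$-free.

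With this reduction in hand, Corollary \ref{coro:Pikhurko2013} immediately yields $e(H)\le t_r(n,k)$ for all $n$ larger than some threshold $n_0$. The same corollary also handles the equality case: if $e(H)=t_r(n,k)$, then $H_{k+1}^{(r)}$-freeness forces $H\cong T_r(n,k)$. To close the loop I would verify that $T_r(n,k)$ itself is Berge-$K_{k+1}$-free, so that the bound is genuinely attained. Any would-be core of $k+1$ vertices in the $k$-partite host must, by pigeonhole, contain two vertices $u,v$ lying in a common part, but the $k$-partiteness of $T_r(n,k)$ precludes any hyperedge from covering the pair $\{u,v\}$. Hence no Berge-$K_{k+1}$ can be embedded in $T_r(n,k)$, and the equality $e(T_r(n,k))=t_r(n,k)$ shows the upper bound is tight.

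No genuine obstacle remains: the combinatorial identification of $H_{k+1}^{(r)}$ as a Berge-$K_{k+1}$ and the pigeonhole verification for $T_r(n,k)$ are both elementary, and all of the real difficulty has been absorbed into the proof of Theorem \ref{thm:main-1} and the passage $p\to\infty$ used to derive Corollary \ref{coro:Pikhurko2013}. The only mild subtlety is conceptual rather than technical, namely that one must interpret the edge-extremal statement through the identity $\lambda^{(\infty)}(H)/r!=e(H)$; but this is precisely the mechanism already exploited to produce Corollary \ref{coro:Pikhurko2013}, so we may invoke the latter as a black box.
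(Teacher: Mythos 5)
Your proposal is correct and follows essentially the same route as the paper: the key observation that $H_{k+1}^{(r)}$ is itself a Berge-$K_{k+1}$ (so Berge-$K_{k+1}$-freeness implies $H_{k+1}^{(r)}$-freeness), combined with the edge-extremal consequence of Theorem \ref{thm:main-1} obtained by letting $p\to\infty$ (Corollary \ref{coro:Pikhurko2013}), is exactly how the paper derives this corollary. Your explicit pigeonhole check that $T_r(n,k)$ is Berge-$K_{k+1}$-free is a harmless extra detail the paper leaves implicit.
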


For $k\geq r\geq 2$, let $\mathcal{F}_k^{(r)}$ be the set of all minimal $r$-graphs 
$F$ such that there is a $k$-set $C$ (called the core) such that at least one edge 
$e\in E(F)$ lies entirely in $C$ and every pair of vertices of $C$ is covered by 
an edge of $F$. Let $F_k^{(r)}$ be the $r$-graph with edges: $[r]$ 
and $E_{ij} \cup\{i,j\}$ over all pairs $\{i,j\}\in \binom{[k]}{2}\setminus\binom{[r]}{2}$, 
where $E_{ij}$ are pairwise disjoint $(r-2)$-sets disjoint from $[k]$.

Mubayi and Pikhurko \cite{Mubayi-Pikhurko2007} proved that $\ex (n, F_{k+1}^{(r)}) = t_r(n,k)$ 
and $T_r(n,k)$ is the unique $r$-graph having $\ex (n, F_{k+1}^{(r)})$ edges for all sufficiently 
large $n$.

Our second main result contributes to Problem \ref{problem:spectral-Turan} by establishing 
a spectral counterpart of Mubayi-Pikhurko's result.

\begin{theorem}\label{thm:main-2}
For any $p\geq r\geq 2$, there is $n_0$ such that for any $F_{k+1}^{(r)}$-free $r$-graph
$H$ on $n>n_0$ vertices, $\lambda^{(p)} (H) \leq \lambda^{(p)} (T_r(n,k))$,
with equality if and only if $H \cong T_r(n,k)$.
\end{theorem}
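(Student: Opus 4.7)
The plan is to mirror the proof of Theorem~\ref{thm:main-1}: establish a $p$-spectral stability result for $F_{k+1}^{(r)}$-free $r$-graphs and then bootstrap it to the exact extremal characterization. Because $F_{k+1}^{(r)}$ is neither a sub- nor a super-hypergraph of $H_{k+1}^{(r)}$, Theorem~\ref{thm:main-1} cannot be invoked directly, so a separate stability argument tailored to $F_{k+1}^{(r)}$ is needed.

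For the stability step I would show that for every $\varepsilon>0$ there exist $\delta>0$ and $n_0$ such that every $n$-vertex $F_{k+1}^{(r)}$-free $r$-graph $H$ with $\lambda^{(p)}(H)\ge (1-\delta)\lambda^{(p)}(T_r(n,k))$ admits a partition $V(H)=V_1\cup\cdots\cup V_k$ such that all but at most $\varepsilon n^r$ edges respect it. The key input is the edge-extremal stability theorem of Mubayi--Pikhurko~\cite{Mubayi-Pikhurko2007}; to feed it, one evaluates $P_H$ on the uniform vector $\bm{x}=n^{-1/p}\bm{1}$, obtaining $\lambda^{(p)}(H)\ge r!\,e(H)\,n^{-r/p}$, so that a spectral deficit forces an edge deficit to which the classical stability applies.

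For the exact step, suppose $H$ is $F_{k+1}^{(r)}$-free with $\lambda^{(p)}(H)\ge\lambda^{(p)}(T_r(n,k))$, and let $\bm{x}\in\mathbb{S}_{p,+}^{n-1}$ be an eigenvector, with $V_1\cup\cdots\cup V_k$ the stability partition. Iterating the eigenvalue equation~\eqref{eq:eigenequation} shows that $x_v$ is of order $n^{-1/p}$ on typical vertices, and a short cleaning step prunes a small exceptional vertex set $W$. If any ``bad'' edge $e$ survives---an edge not belonging to the complete $k$-partite $r$-graph on the $V_i$'s, in particular one containing two vertices of some $V_i$---I would greedily embed $F_{k+1}^{(r)}$ by taking $e$ as the core edge $[r]$, selecting $k+1-r$ additional centers, one per remaining part, and choosing pairwise disjoint $(r-2)$-sets $E_{ij}$ realising each required enlarged edge. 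The near-$T_r(n,k)$ cross-part $(r-1)$-degrees provide enough room for these greedy choices to be made disjointly from $[k+1]$ and from each other.

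The main obstacle is the exact step: the assertions ``$\bm{x}$ is nearly uniform on each $V_i$'' and ``no bad edges survive'' are coupled through~\eqref{eq:eigenequation} and must be established simultaneously. I expect to handle this by an iterative refinement---use near-uniformity to promote any stubborn bad edge into a copy of $F_{k+1}^{(r)}$, contradicting freeness, and then use the absence of bad edges to sharpen the eigenvector estimates until $H$ is forced to be a sub-hypergraph of a complete $k$-partite $r$-graph on $[n]$. A final $p$-spectral comparison across all such $k$-partitions of $[n]$ then singles out the balanced $T_r(n,k)$ as the unique maximizer. The principal structural novelty relative to Theorem~\ref{thm:main-1} is that the single core edge $[r]$ of $F_{k+1}^{(r)}$ plays a distinguished role and must be matched to the bad edge $e$ in $H$, which shapes the specific form of the greedy embedding.
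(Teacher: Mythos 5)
Your high-level plan (spectral stability for $F_{k+1}^{(r)}$, then an exact step) does match the paper's architecture, but the way you propose to carry out the stability step contains a genuine error of direction, and the exact step leaves out the mechanism that actually does the work. For stability, testing $P_H$ on the uniform vector gives $\lambda^{(p)}(H)\ge r!\,e(H)\,n^{-r/p}$, i.e.\ an \emph{upper} bound on $e(H)$ in terms of $\lambda^{(p)}(H)$; this can never convert the hypothesis ``$\lambda^{(p)}(H)$ is nearly extremal'' into ``$e(H)$ is nearly extremal,'' which is what you need to invoke Mubayi--Pikhurko. One needs the reverse inequality $\lambda^{(p)}(H)\le (r!\,e(H))^{1-1/p}\,\pi_{\lambda}(\mathcal{F})^{1/p}$ (as in Lemma \ref{lem:general-criterion}), and here the constant matters: the crude bound of Lemma \ref{lem:size-upper-bound} is too weak because $\big(\tfrac{(k)_r}{k^r}\big)^{1-1/p}>\tfrac{(k)_r}{k^r}$, so a Lagrangian-density bound is indispensable. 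But for the single graph $F_{k+1}^{(r)}$ one has $\pi_{\lambda}(F_{k+1}^{(r)})\ge \frac{(k+1)_r}{(k+1)^r}>\frac{(k)_r}{k^r}$ (indeed $K_{k+1}^{(r)}$, having only $k+1$ vertices, is $F_{k+1}^{(r)}$-free), so the criterion cannot be applied to $F_{k+1}^{(r)}$ directly. The paper first passes to the covering family $\mathcal{F}_{k+1}^{(r)}$: it deletes all edges containing a pair of codegree at most $d=O(n^{r-3})$, shows the remainder is $\mathcal{F}_{k+1}^{(r)}$-free, bounds $\pi_{\lambda}(\mathcal{F}_{k+1}^{(r)})\le\frac{(k)_r}{k^r}$ via Frankl--R\"odl and Maclaurin (Proposition \ref{prop:Lagrangian-density-1}), uses the Mubayi--Pikhurko stability for the \emph{family} (Lemma \ref{lem:stability-for-edge-2}), and controls the spectral loss of the deletion by Proposition \ref{prop:weyl-inequality} and Lemma \ref{lem:size-upper-bound}. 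This reduction is entirely absent from your outline.

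In the exact step, the greedy embedding you describe needs every pair used in the core to have \emph{large codegree} (so the $(r-2)$-sets $E_{ij}$ can be chosen pairwise disjoint and disjoint from the core); ``near-$T_r(n,k)$ cross-part degrees'' of single vertices do not provide this, since a bad edge may be incident to vertices lying in many sparse pairs or many missing edges. This is exactly where the paper's effort is concentrated: it fixes a partition maximizing $f_H(\bm{\sigma})$, introduces the exceptional sets $L$ (many sparse pairs) and $M$ (many missing edges) with $L\subseteq M$ and $|M|=o(n)$, proves that $V_i\setminus M$ is strongly independent, establishes the degree gap $d_H(u)<\binom{k-1}{r-1}(n/k)^{r-1}-2\ell$ for $u\in M$ versus the lower bound for the vertex $z$ of maximum eigenvector entry, and then kills $M$ by a Zykov-type symmetrization (replacing $E_u$ by a cleaned copy of the link of $z$, verifying the new graph is still $F_{k+1}^{(r)}$-free and has strictly larger $p$-spectral radius, contradicting extremality). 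Your ``iterative refinement'' names the obstacle but supplies no such mechanism; in particular your intermediate claim that the eigenvector is near-uniform on typical vertices is neither established nor needed (only $x_z\ge n^{-1/p}$ is used). The final comparison among $k$-partite hosts is fine and is Lemma \ref{lem:k-partite-upper-bound}, but as written the proposal has real gaps at both the stability and the exact stages.
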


Let Fan$^r$ denote the $r$-graph consisting of $r + 1$ edges $e_1,\ldots,e_r,e$,
with $e_i\cap e_j = \{u\}$ for all $i\neq j$, where $u\notin e$, and $|e_i\cap e|=1$ 
for all $i$. In other words, in Fan$^r$, $r$ edges share a single common vertex 
$u$ and the remaining edge intersects each of the other edges in a single vertex 
different from $u$. Thus, Fan$^r = F_{r+1}^{(r)}$. Clearly, Fan$^2$ is simply 
$K_3$, and in this sense Fan$^r$ generalizes the definition of 
triangle\footnote{Another way to generalize $K_3$, suggested by Katona \cite{Katona1974} 
and Bollob\'as \cite{Bollobas1974}, involves the concept of cancellative hypergraphs. 
An $r$-graph $H$ is call cancellative hypergraph if $H$ has no three distinct 
triples $A, B, C$ satisfying $B\triangle C \subset A$, where $\triangle$ denotes 
the symmetric difference. The spectral Tur\'an-type result for cancellative 
$3$-graphs was established in \cite{Ni-Liu-Kang2024}.}. 

\begin{corollary}
Let $p\geq r$. The unique $r$-graph attaining the maximum $p$-spectral radius among 
all $n$-vertex $r$-graphs containing no copy of Fan$^r$ is $T_r(n,r)$ for sufficiently large $n$.
\end{corollary}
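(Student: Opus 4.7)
The plan is to observe that this corollary is an immediate specialization of Theorem \ref{thm:main-2} to the case $k=r$, once one checks that $\mathrm{Fan}^r$ is literally the hypergraph $F_{r+1}^{(r)}$. So essentially all the work has already been carried out; only the identification needs to be made explicit.

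First I would unpack the definition of $F_{k+1}^{(r)}$ at $k=r$. The pairs $\{i,j\}\in\binom{[r+1]}{2}\setminus\binom{[r]}{2}$ are precisely the $r$ pairs $\{i,r+1\}$ with $i\in[r]$. Hence $F_{r+1}^{(r)}$ consists of the single ``core'' edge $[r]$ together with the $r$ edges $E_{i,r+1}\cup\{i,r+1\}$ for $i\in[r]$. Since the $(r-2)$-sets $E_{i,r+1}$ are pairwise disjoint and disjoint from $[r+1]$, any two of these latter edges meet only in the vertex $r+1$, while the core edge $[r]$ meets $E_{i,r+1}\cup\{i,r+1\}$ only in the vertex $i\neq r+1$. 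Setting $u=r+1$ and $e=[r]$, this is precisely the description of $\mathrm{Fan}^r$: $r$ edges sharing one common vertex $u$ together with one extra edge meeting each of them in a single vertex different from $u$. Thus $\mathrm{Fan}^r=F_{r+1}^{(r)}$.

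With this identification in hand, the corollary follows by invoking Theorem \ref{thm:main-2} with $k=r$: for every $p\geq r\geq 2$ there exists $n_0$ such that any $\mathrm{Fan}^r$-free $r$-graph $H$ on $n>n_0$ vertices satisfies $\lambda^{(p)}(H)\leq \lambda^{(p)}(T_r(n,r))$, with equality if and only if $H\cong T_r(n,r)$. Note that the hypothesis $k\geq r$ imposed in the definition of $F_{k+1}^{(r)}$ is met with equality here, which is the smallest admissible value and precisely the case we need.

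In short, there is no real obstacle: once the straightforward combinatorial identification $\mathrm{Fan}^r=F_{r+1}^{(r)}$ is verified, the conclusion is a direct instance of Theorem \ref{thm:main-2}. If anything, the only point worth emphasizing in the write-up is that the $p$-spectral stability framework developed for the generalized fan $F_{k+1}^{(r)}$ already covers the $k=r$ boundary case, so no separate argument is needed for this classical generalization of the triangle.
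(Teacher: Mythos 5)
Your proposal is correct and matches the paper's own (implicit) argument: the paper likewise notes that $\mathrm{Fan}^r = F_{r+1}^{(r)}$ and derives the corollary as the $k=r$ instance of Theorem \ref{thm:main-2}. Your explicit verification of the identification is a fine, if routine, addition.
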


\begin{corollary}[\cite{Mubayi-Pikhurko2007}]\label{coro:Mubayi-Pikhurko2007}
For any $k\geq r$, there is $n_0$ such that for any $F_{k+1}^{(r)}$-free $r$-graph
$H$ on $n>n_0$ vertices, $e(H) \leq t_r(n,k)$,
with equality if and only if $H \cong T_r(n,k)$.
\end{corollary}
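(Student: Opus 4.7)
The strategy is to derive the corollary as the $p\to\infty$ limit of Theorem \ref{thm:main-2}. Two ingredients set this up: first, for any fixed $r$-graph $H$, the map $p\mapsto\lambda^{(p)}(H)$ is continuous on $[1,\infty]$; second, as noted in the remark after \eqref{eq:eigenequation} (cf.\ \cite[Proposition 2.10]{Nikiforov2014}), $\lambda^{(\infty)}(H)=r!\,e(H)$. Combined, these give $\lim_{p\to\infty}\lambda^{(p)}(H)/r!=e(H)$ for every $H$.

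The edge bound then follows by a direct limit. Fix $n_0$ as supplied by Theorem \ref{thm:main-2} at (say) $p=r$, and let $H$ be any $F_{k+1}^{(r)}$-free $r$-graph on $n>n_0$ vertices. Theorem \ref{thm:main-2} gives $\lambda^{(p)}(H)\leq\lambda^{(p)}(T_r(n,k))$ for every $p\geq r$; dividing by $r!$ and sending $p\to\infty$ produces $e(H)\leq e(T_r(n,k))=t_r(n,k)$.

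For the uniqueness clause, suppose $e(H)=t_r(n,k)$; the goal is $H\cong T_r(n,k)$. A naive limit of the ``iff'' in Theorem \ref{thm:main-2} is insufficient here, because the strict inequality $\lambda^{(p)}(H)<\lambda^{(p)}(T_r(n,k))$ that the theorem delivers at every finite $p\geq r$ when $H\not\cong T_r(n,k)$ can collapse to equality as $p\to\infty$. To bridge this, I would appeal to the $p$-spectral stability theorem underpinning the proof of Theorem \ref{thm:main-2} (the $F_{k+1}^{(r)}$-counterpart of Theorem \ref{thm:spectral-stability-expanded-clique}): any $F_{k+1}^{(r)}$-free $H$ whose $p$-spectral radius is within $o(\lambda^{(p)}(T_r(n,k)))$ of the extremum lies within edit distance $o(n^r)$ of $T_r(n,k)$. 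Passing this statement to $p=\infty$ via the identity above produces an edge-stability statement; combined with $e(H)=t_r(n,k)$ it forces $H$ to be almost $k$-partite with almost balanced parts, and a short local-modification step (move each misplaced vertex to its best part, and exclude remaining deviations using $F_{k+1}^{(r)}$-freeness) then pins down $H\cong T_r(n,k)$.

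The main obstacle is precisely this uniqueness step. The inequality transfers to $p=\infty$ by sheer continuity, but recovering the ``$H\cong T_r(n,k)$'' conclusion requires extracting the structural content from the stability backbone of Theorem \ref{thm:main-2}, rather than treating that theorem as a black box.
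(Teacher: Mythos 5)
Your derivation of the bound $e(H)\le t_r(n,k)$ is exactly the paper's route: the corollary is obtained by letting $p\to\infty$ in Theorem \ref{thm:main-2}, using continuity of $\lambda^{(p)}(H)$ in $p$ and $\lambda^{(\infty)}(H)=r!\,e(H)$; the paper records this argument just before Corollary \ref{coro:Pikhurko2013} and applies it verbatim here. Your further observation is fair: the naive limit really only delivers the non-strict edge inequality, since for a fixed $H\not\cong T_r(n,k)$ with $e(H)=t_r(n,k)$ the strict inequalities $\lambda^{(p)}(H)<\lambda^{(p)}(T_r(n,k))$ at every finite $p$ are perfectly compatible with both sides converging to $r!\,t_r(n,k)$ (note $\lambda^{(p)}(H)\ge r!\,e(H)\,n^{-r/p}$ while $\lambda^{(p)}(T_r(n,k))\le (r!\,t_r(n,k))^{1-1/p}$, and these bounds never collide). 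So the paper's ``immediately obtain'' is genuinely immediate only for the inequality; the uniqueness clause is Mubayi--Pikhurko's theorem, which the corollary cites.

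The gap is in your repair of the equality case: what you call ``a short local-modification step'' after the stability statement is precisely the hard content of Mubayi--Pikhurko's proof (and, in the spectral setting, of Sections \ref{sec4}--\ref{sec5} of this paper, where it occupies the lemmas on the exceptional sets $L$ and $M$, the strong independence of $V_i\setminus L$, the degree bounds, and the symmetrization step). As written, the uniqueness clause is therefore not proved but only outlined, and carrying it out amounts to redoing the edge-extremal argument rather than deducing it from Theorem \ref{thm:main-2}. A second, smaller issue: you fix $n_0$ from Theorem \ref{thm:main-2} at $p=r$ and then invoke the theorem for every $p\ge r$ at that same $n$, but the theorem's $n_0$ is permitted to depend on $p$; to pass to the limit at a fixed $n$ you need a threshold valid for arbitrarily large $p$ (this uniformity is also left implicit in the paper's one-line derivation, but a complete write-up should address it, e.g.\ by checking that the thresholds in the proof of Theorem \ref{thm:main-2} can be taken independent of $p\ge r$).
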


This paper is organized as follows. In Section \ref{sec2} we introduce some
preliminary definitions and lemmas for the proofs of our main results.
Subsequently, in Section \ref{sec3} we establish spectral stability results 
for $H_{k+1}^{(r)}$ and $F_{k+1}^{(r)}$ respectively, which play a pivotal 
role in substantiating main results of this paper. The subsequent sections, 
Section \ref{sec4} and Section \ref{sec5}, are dedicated to the proofs of 
Theorem \ref{thm:main-1} and Theorem \ref{thm:main-2}, respectively.
We conclude this paper with some remarks and open problems in the last section.

\section{Preliminaries}
\label{sec2}

In this section we introduce definitions and notation that will be used throughout this paper, 
and give some preliminary results.

\subsection{Notation}

Consider an $r$-graph $H = (V(H), E(H))$ on $n$ vertices, where $E(H)$ is a collection 
of $r$-subsets of $V(H)$. For each vertex $v$, let $E_v$ denote the set of edges of $H$ 
containing $v$. The \emph{degree} $d_H(v)$ of $v$ is the size of $E_v$. 
The \emph{link} of $v$ in $H$ is defined as $\{e\setminus\{v\}: e\in E_v\}$. 
The \emph{codegree} of vertices $u$ and $v$ is the number of edges containing both $u$ and $v$. 
A \emph{$k$-independent set} in a hypergraph $H$ is a set $I \subseteq V(H)$ so that 
$|I\cap e| < k$ for all $e \in E(H)$. We refer to a $2$-independent set as a \emph{strong independent set}.
Let $H_1$ and $H_2$ be two $r$-graphs sharing the same vertex set $V$. Define $H_1\setminus H_2$ 
as the $r$-graph with vertex set $V$ and edge set $E(H_1)\setminus E(H_2)$. Also, we 
write $H_1 + H_2$ for the $r$-graph with vertex set $V$ and edge set $E(H_1) \cup E(H_2)$.

Let $H$ be an $r$-graph. The \emph{transversal number} $\tau (H)$ of $H$ is the minimum 
cardinality of a subset of $V(H)$ which intersects all edges of $H$. Given an $r$-graph 
$H$ of order $n$ and positive integers $k_1,\ldots,k_n$, write $H(k_1,\ldots,k_n)$ for 
the $r$-graph obtained by replacing each vertex $v\in V(H)$ with a set $U_v$ of size $k_v$ 
and each edge $\{v_1,\ldots,v_r\}\in E(H)$ with a complete $r$-partite $r$-graph with vertex 
classes $U_{v_1},\ldots,U_{v_r}$. The graph $H(k_1,\ldots,k_n)$ is called a \emph{blow-up} 
of $H$. For more definitions and notation from hypergraph theory, see, e.g. \cite{Bretto2013}.

\subsection{Bounds on the $p$-spectral radius}

We begin with the following inequality which may be regarded as a part of the well-known 
Weyl's inequalities for hypergraphs.

\begin{proposition}[Weyl's inequality, {\cite[Proposition 6.2]{Nikiforov2014}}]\label{prop:weyl-inequality}
Let $H_1$ and $H_2$ be two edge-disjoint $r$-graphs with same vertex set. Then 
\[
\lambda^{(p)} (H_1 + H_2) \leq \lambda^{(p)} (H_1) + \lambda^{(p)} (H_2).
\]
\end{proposition}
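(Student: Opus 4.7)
The plan is to exploit the fact that edge-disjointness makes the polynomial $P_H$ additive on the union, and then to apply the variational definition \eqref{eq:definition-p-spectral-radius} to each summand at a common test vector. Concretely, since $E(H_1)\cap E(H_2)=\emptyset$ and $H_1,H_2$ share the same vertex set, the edge set of $H_1+H_2$ is the disjoint union $E(H_1)\sqcup E(H_2)$, hence for every $\bm{x}\in\mathbb{R}^n$,
\[
P_{H_1+H_2}(\bm{x}) \;=\; r!\sum_{e\in E(H_1)}\prod_{v\in e}x_v \;+\; r!\sum_{e\in E(H_2)}\prod_{v\in e}x_v \;=\; P_{H_1}(\bm{x})+P_{H_2}(\bm{x}).
\]

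Next I would invoke the observation made right after \eqref{eq:definition-p-spectral-radius} that the maximum in the definition of $\lambda^{(p)}(H_1+H_2)$ is attained on the nonnegative sphere $\mathbb{S}_{p,+}^{n-1}$. So fix some $\bm{x}^{\ast}\in\mathbb{S}_{p,+}^{n-1}$ with $\lambda^{(p)}(H_1+H_2)=P_{H_1+H_2}(\bm{x}^{\ast})$. Since $\bm{x}^{\ast}$ itself has $\|\bm{x}^{\ast}\|_p=1$, the variational characterization gives $P_{H_i}(\bm{x}^{\ast})\le \lambda^{(p)}(H_i)$ for $i=1,2$. Combining with the identity displayed above yields
\[
\lambda^{(p)}(H_1+H_2) \;=\; P_{H_1}(\bm{x}^{\ast})+P_{H_2}(\bm{x}^{\ast}) \;\le\; \lambda^{(p)}(H_1)+\lambda^{(p)}(H_2),
\]
which is the claim.

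There is essentially no obstacle here: the entire argument hinges on (i) the disjointness of edge sets turning $P_{H_1+H_2}$ into a sum, and (ii) the fact that a nonnegative maximizer exists so that evaluating $P_{H_1}$ and $P_{H_2}$ separately at that common vector is bounded by their respective $p$-spectral radii. The only subtle point worth flagging in the write-up is that the two terms $P_{H_1}(\bm{x}^{\ast})$ and $P_{H_2}(\bm{x}^{\ast})$ are bounded by $\lambda^{(p)}(H_i)$ simultaneously at the same test vector $\bm{x}^{\ast}$, which is legal precisely because the definition of $\lambda^{(p)}$ is a supremum over all unit-$p$-norm vectors, not the value at any particular eigenvector of $H_i$. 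No eigenvalue equation \eqref{eq:eigenequation} or Lagrange-type computation is required for this inequality.
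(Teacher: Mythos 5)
Your argument is correct: edge-disjointness makes $P_{H_1+H_2}(\bm{x})=P_{H_1}(\bm{x})+P_{H_2}(\bm{x})$, and evaluating both summands at a single maximizer $\bm{x}^{\ast}$ of $P_{H_1+H_2}$ with $\|\bm{x}^{\ast}\|_p=1$ bounds each term by $\lambda^{(p)}(H_i)$, which is exactly the standard proof of this inequality; the paper itself gives no proof, quoting it from Nikiforov \cite[Proposition 6.2]{Nikiforov2014}. One small simplification: the appeal to a nonnegative maximizer is unnecessary, since $\lambda^{(p)}(H_i)$ is defined as a maximum over \emph{all} vectors with $\|\bm{x}\|_p=1$, so $P_{H_i}(\bm{x}^{\ast})\leq\lambda^{(p)}(H_i)$ holds for any maximizer $\bm{x}^{\ast}$ whatsoever.
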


The subsequent lemma extends the evident inequality $\lambda^{(2)} (G) \leq \sqrt{2e(G)}$ 
from graphs to hypergraphs.

\begin{lemma}[{\cite[Lemma 1.8]{Keevash-Lenz-Mubayi2014}}]\label{lem:size-upper-bound}
Let $p>1$ and $H$ be an $r$-graph with $m$ edges. Then 
\[
\lambda^{(p)} (H) \leq (r! m)^{1-1/p}.
\]
\end{lemma}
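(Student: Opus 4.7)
The plan is to work directly with the variational characterization~\eqref{eq:definition-p-spectral-radius}: it suffices to bound $P_H(\bm{x}) = r! \sum_{e \in E(H)} \prod_{v \in e} x_v$ above by $(r!m)^{1-1/p}$ for every $\bm{x} \in \mathbb{S}_{p,+}^{n-1}$, and then take the supremum. The natural tool here is H\"older's inequality applied to the sum indexed by $E(H)$, matched against the $\ell_p$ normalization $\|\bm{x}\|_p = 1$.

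First I would write $\prod_{v \in e} x_v = 1 \cdot \prod_{v \in e} x_v$ and apply H\"older's inequality with conjugate exponents $p/(p-1)$ and $p$ to the sum over $e \in E(H)$, obtaining
\[
\sum_{e \in E(H)} \prod_{v \in e} x_v \;\leq\; m^{1 - 1/p} \left(\sum_{e \in E(H)} \prod_{v \in e} x_v^p\right)^{\!1/p}.
\]
The remaining task is to bound the inner sum by a constant depending only on $r$. For this I would enlarge the index set from $E(H)$ to all $r$-subsets of $V(H)$ and invoke the elementary inequality
\[
\left(\sum_{v \in V(H)} x_v^p\right)^{\!r} \;\geq\; r! \sum_{S \in \binom{V(H)}{r}} \prod_{v \in S} x_v^p,
\]
obtained by expanding the left-hand side as a sum over ordered tuples in $V(H)^r$ and discarding the nonnegative contribution of tuples with repeated coordinates. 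Since $\|\bm{x}\|_p^p = 1$, this gives $\sum_{e \in E(H)} \prod_{v \in e} x_v^p \leq 1/r!$, and inserting this into the H\"older bound yields $P_H(\bm{x}) \leq r! \cdot m^{1 - 1/p} \cdot (r!)^{-1/p} = (r!m)^{1 - 1/p}$, as required.

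The argument is essentially a routine application of H\"older's inequality and a symmetrization over ordered tuples, so no substantive obstacle is anticipated. The only point requiring some care is the matching of the H\"older exponent to the $\ell_p$ normalization: the conjugate pair $(p/(p-1),\,p)$ is precisely what forces the factor $(r!)^{-1/p}$ coming from the multinomial step to combine with the outer $r!$ into the clean exponent $1 - 1/p$.
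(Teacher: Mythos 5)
Your argument is correct: H\"older's inequality with exponents $(p/(p-1),\,p)$ over the edge set, followed by the observation that $r!\sum_{S\in\binom{V(H)}{r}}\prod_{v\in S}x_v^p \le \bigl(\sum_{v}x_v^p\bigr)^{r}=1$, yields exactly $P_H(\bm{x})\le r!\,m^{1-1/p}(r!)^{-1/p}=(r!m)^{1-1/p}$ for every $\bm{x}\in\mathbb{S}_{p,+}^{n-1}$. The paper does not reprove this lemma (it is quoted from Keevash--Lenz--Mubayi), and your route is the standard one --- it is the same H\"older/power-mean step the paper itself uses in the proof of Lemma \ref{lem:general-criterion}, with the Lagrangian bound there replaced by the trivial multinomial estimate $\sum_{e\in E(H)}\prod_{v\in e}x_v^p\le 1/r!$.
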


\begin{lemma}[{\cite[Theorem 2]{Kang-Nikiforov-Yuan2015}}]\label{lem:k-partite-upper-bound}
Let $k\geq r\geq 2$, and let $H$ be a $k$-partite $r$-graph of order $n$. Then for each $p>1$,
\[
\lambda^{(p)} (H) < \lambda^{(p)} (T_r(n,k)),
\]
unless $H = T_r(n,k)$.
\end{lemma}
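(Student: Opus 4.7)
The plan is a two-stage reduction: first reduce to the complete $k$-partite $r$-graph on the same partition, then balance the part sizes. If $H^*$ is the complete $k$-partite $r$-graph on the partition of $H$, then $P_{H^*}(\bm{x}) - P_H(\bm{x}) = r!\sum_{e\in E(H^*)\setminus E(H)}\prod_{v\in e} x_v \geq 0$ for every $\bm{x}\in\mathbb{S}_{p,+}^{n-1}$, giving $\lambda^{(p)}(H) \leq \lambda^{(p)}(H^*)$. Strictness when $H\neq H^*$ follows because an optimal vector $\bm{x}^*$ for $H^*$ has all positive entries (a Perron-type consequence of the symmetry of $H^*$), so each missing edge contributes a strictly positive amount. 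For $H^* = K^{(r)}_{n_1,\ldots,n_k}$, the optimal vector is further constant on each part: if $u,v$ lie in a common part, no edge contains both, so $P_{H^*}(\bm{x}) = A + B(x_u + x_v)$ with a symmetric coefficient $B\geq 0$, and maximizing $x_u + x_v$ under $x_u^p + x_v^p = c$ with $p > 1$ uniquely yields $x_u = x_v$ by the power-mean inequality.

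The remaining task is to maximize
\[
\Lambda(m_1,\ldots,m_k) := \max\bigg\{r!\sum_{S\in\binom{[k]}{r}}\prod_{i\in S} m_i x_i : \sum_i m_i x_i^p = 1,\ x_i\geq 0\bigg\}
\]
over integer compositions $(m_1,\ldots,m_k)$ of $n$. I would argue that $\Lambda$ is strictly Schur concave. The envelope theorem combined with \eqref{eq:eigenequation} yields the clean formula $\partial\Lambda/\partial m_i = \frac{r(p-1)}{p}\Lambda x_i^p$. Subtracting the reduced eigenvalue equations for indices $i,j$ gives
\[
\Lambda\,(x_i^{p-1} - x_j^{p-1}) = (r-1)!\, R_{ij}\, (m_j x_j - m_i x_i),
\]
where $R_{ij}$ is the elementary symmetric polynomial of degree $r-2$ in $\{m_\ell x_\ell : \ell\neq i, j\}$. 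A short contradiction argument using $m_i < m_j$ forces $x_i > x_j$, so $(\partial_i - \partial_j)\Lambda$ has the opposite sign of $m_i - m_j$. By the Schur--Ostrowski criterion, $\Lambda$ is strictly Schur concave, hence uniquely maximized at the most balanced integer composition --- precisely the part sizes of $T_r(n,k)$.

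The main obstacle is the eigenvector ordering $m_i < m_j \Rightarrow x_i > x_j$, which underlies the entire balancing step. The reduction to complete $k$-partite graphs, the constant-on-parts property, and the Schur--Ostrowski criterion are essentially mechanical once this ordering is in hand, but extracting it requires the displayed algebraic identity from the eigenvalue equations and a careful contradiction argument. Passing from strict Schur concavity on real compositions to strict inequality at integer compositions is then immediate, since the balanced integer composition is majorized by every other integer composition of $n$ (and differs from them up to permutation). An alternative route avoids Schur concavity entirely: plug the optimal vector of $H^*$ directly into $P_{H^{**}}$, where $H^{**}$ is obtained from $H^*$ by shifting one vertex from a larger to a smaller part, and track both the numerator change (positive by the eigenvector ordering together with the size gap $n_j - n_i \geq 2$) and the resulting shift of the $p$-norm.
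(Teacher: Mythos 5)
This lemma is not proved in the paper at all: it is imported verbatim as \cite[Theorem 2]{Kang-Nikiforov-Yuan2015}, so there is no internal argument to compare with and your proposal has to stand on its own. Its skeleton is viable, and the computational core checks out: the envelope formula $\partial\Lambda/\partial m_i=\tfrac{r(p-1)}{p}\Lambda x_i^{p}$ is correct (the Lagrange multiplier equals $r\Lambda/p$), the displayed difference identity follows from splitting $e_{r-1}$ of $\{m_\ell x_\ell:\ell\neq i\}$ as $e_{r-1}+m_jx_j e_{r-2}$ over the indices $\neq i,j$, and since $k\geq r$ and the inner optimizer is positive the factor $R_{ij}=e_{r-2}>0$, so the ordering $m_i<m_j\Rightarrow x_i>x_j$, strict Schur concavity, and the fact that the balanced integer composition is majorized by every other composition of $n$ into $k$ parts do finish the balancing step.

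Two points need repair. First, your strictness argument in the reduction to $H^*$ runs in the wrong direction: evaluating the missing edges at an optimizer $\bm{x}^*$ of $H^*$ only gives $P_H(\bm{x}^*)<\lambda^{(p)}(H^*)$, while $\lambda^{(p)}(H)\geq P_H(\bm{x}^*)$, so no comparison of the two spectral radii follows. The correct argument takes an optimizer $\bm{x}$ of $H$, uses $\lambda^{(p)}(H)=P_H(\bm{x})\leq P_{H^*}(\bm{x})\leq\lambda^{(p)}(H^*)$, and in the equality case concludes that $\bm{x}$ is also an optimizer of $H^*$, hence positive, so the missing edges contribute strictly --- this is precisely Lemma \ref{lem:complete-k-partite-x-positive} combined with Lemma \ref{lem:subgraph-spectral-radius}, both themselves quoted from \cite{Kang-Nikiforov-Yuan2015}; note the paper explicitly warns that the positivity statement is ``not obvious'' for arbitrary $p>1$, so dismissing it as a Perron-type symmetry consequence undersells a needed ingredient. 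Second, the Schur--Ostrowski criterion requires $\Lambda$ to be differentiable in $m$, but $\Lambda$ is the value function of a non-concave program; you should either establish uniqueness of the inner maximizer so that Danskin's theorem yields the stated derivative, or work with directional derivatives, or simply adopt the ``alternative route'' you sketch (shift one vertex between two parts whose sizes differ by at least two and evaluate at the old optimizer), which bypasses the calculus in $m$ entirely and is the more robust way to close the argument.
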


We also need the following lower bound on $\lambda^{(p)} (T_r(n,k))$. 

\begin{lemma}\label{lem:lambdaT}
Let $k\geq r\geq 2$. Then
\[ 
\lambda^{(p)} (T_r(n, k)) \geq \big( 1 - O(n^{-1}) \big) \cdot \frac{(k)_r}{k^r} n^{r(1-1/p)}.
\]
\end{lemma}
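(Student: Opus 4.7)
The plan is to use the variational characterization \eqref{eq:definition-p-spectral-radius} of $\lambda^{(p)}$ by exhibiting a single test vector and lower-bounding $P_{T_r(n,k)}(\mathbf{x})$. Because $T_r(n,k)$ is almost vertex-transitive (the parts differ by at most one in size), the natural and essentially optimal choice is the uniform unit vector.

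First I would set $\mathbf{x} = (n^{-1/p},\ldots,n^{-1/p})^{\mathrm{T}} \in \mathbb{R}^n$, which lies in $\mathbb{S}_{p,+}^{n-1}$ since $\|\mathbf{x}\|_p^p = n \cdot n^{-1} = 1$. Substituting into the definition gives
\[
\lambda^{(p)}(T_r(n,k)) \;\geq\; P_{T_r(n,k)}(\mathbf{x}) \;=\; r!\, e(T_r(n,k)) \cdot n^{-r/p} \;=\; r!\, t_r(n,k)\cdot n^{-r/p}.
\]

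Next I would plug in the identity \eqref{eq:size-T(n,k)}, namely $t_r(n,k) = \bigl(1 - O(n^{-1})\bigr) \cdot \tfrac{(k)_r}{k^r}\binom{n}{r}$, and use $r!\binom{n}{r} = (n)_r = n(n-1)\cdots(n-r+1) = \bigl(1 - O(n^{-1})\bigr)\, n^r$ (for fixed $r$, the error absorbs into the $O(n^{-1})$ factor). Combining these yields
\[
\lambda^{(p)}(T_r(n,k)) \;\geq\; \bigl(1 - O(n^{-1})\bigr)\cdot \frac{(k)_r}{k^r}\cdot n^r \cdot n^{-r/p} \;=\; \bigl(1 - O(n^{-1})\bigr)\cdot \frac{(k)_r}{k^r}\, n^{r(1-1/p)},
\]
which is the desired bound. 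There is no real obstacle here; the only thing to be mindful of is keeping the two $O(n^{-1})$ error terms (one from $t_r(n,k)$, one from the falling factorial $(n)_r$ versus $n^r$) merged into a single $O(n^{-1})$ factor, which is routine for fixed $k,r$.
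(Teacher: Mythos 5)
Your proposal is correct and matches the paper's proof essentially verbatim: the paper also takes the uniform vector $\bm{x} = n^{-1/p}(1,\ldots,1)^{\mathrm{T}}\in\mathbb{S}_{p,+}^{n-1}$, bounds $\lambda^{(p)}(T_r(n,k)) \geq r!\,e(T_r(n,k))\,n^{-r/p}$, and invokes \eqref{eq:size-T(n,k)} to absorb the errors into a single $O(n^{-1})$ factor. Nothing further is needed.
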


\begin{proof}
Let $\bm{x} = n^{-1/p} (1,1,\ldots, 1)^{\mathrm{T}} \in\mathbb{S}_{p,+}^{n-1}$.
By \eqref{eq:definition-p-spectral-radius} and \eqref{eq:size-T(n,k)}, we have 
\begin{align*}
\lambda^{(p)} (T_r(n, k)) 
& \geq \frac{r!\cdot e(T_r(n,k))}{n^{r/p}} \\
& = \big( 1 - O(n^{-1}) \big) \cdot \frac{(k)_r}{k^r} n^{r(1-1/p)},
\end{align*}
completing the proof of Lemma \ref{lem:lambdaT}.
\end{proof}



\subsection{Eigenvectors corresponding to $\lambda^{(p)}$}

 

The two lemmas that follow, while simple, are not obvious for arbitrary $p>1$.

\begin{lemma}[{\cite[Proposition 11]{Kang-Nikiforov-Yuan2015}}]\label{lem:complete-k-partite-x-positive}
If $H$ is a complete $k$-partite $r$-graph and $p > 1$, then every nonnegative
vector corresponding to $\lambda^{(p)} (H)$ is positive.    
\end{lemma}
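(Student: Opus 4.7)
The plan is to argue by contradiction using a first-order variational argument. Let $H$ be a complete $k$-partite $r$-graph with parts $V_1,\ldots,V_k$. The statement is only meaningful when $H$ has at least one edge, which forces $k\ge r$, and in that case
\[
\lambda := \lambda^{(p)}(H) \;\geq\; P_H\bigl(n^{-1/p}(1,\ldots,1)^{\mathrm{T}}\bigr) \;=\; r!\,e(H)\,n^{-r/p} \;>\; 0.
\]
Fix any nonnegative eigenvector $\bm{x}\in\mathbb{S}_{p,+}^{n-1}$ with $P_H(\bm{x})=\lambda$, and suppose for contradiction that $x_v=0$ for some vertex $v$, which we may take to lie in $V_1$.

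The first step is to show that the link sum
\[
S_v \;:=\; \sum_{\{v,i_2,\ldots,i_r\}\in E(H)} x_{i_2}\cdots x_{i_r}
\]
must vanish. For small $t>0$, let $\bm{y}(t)$ be the vector that agrees with $\bm{x}$ on every coordinate except the $v$-th, which is set to $t$. Since $x_v=0$, a direct expansion gives $P_H(\bm{y}(t))=\lambda+r!\,t\,S_v$ exactly, while $\|\bm{y}(t)\|_p^r=(1+t^p)^{r/p}=1+(r/p)\,t^p+O(t^{2p})$. Normalizing yields
\[
P_H\!\bigl(\bm{y}(t)/\|\bm{y}(t)\|_p\bigr)=\lambda+r!\,t\,S_v-\tfrac{\lambda r}{p}\,t^p+o(t),
\]
so for $p>1$ the linear term dominates the $t^p$ correction as $t\to 0^+$. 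Thus $S_v>0$ would contradict the maximality of $\bm{x}$, and therefore $S_v=0$.

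The second step extracts a combinatorial contradiction from $S_v=0$. Set $X_j:=\sum_{u\in V_j}x_u\ge 0$. Because $H$ is complete $k$-partite, the edges through $v\in V_1$ correspond to choosing one vertex from each of $r-1$ distinct parts in $\{V_2,\ldots,V_k\}$, which gives $S_v=e_{r-1}(X_2,\ldots,X_k)$, where $e_{r-1}$ denotes the elementary symmetric polynomial of degree $r-1$. The same decomposition produces $\lambda=P_H(\bm{x})=r!\,e_r(X_1,\ldots,X_k)$. From $e_{r-1}(X_2,\ldots,X_k)=0$ with $X_j\ge 0$, at most $r-2$ of $X_2,\ldots,X_k$ are strictly positive, so at most $r-1$ of $X_1,\ldots,X_k$ are. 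This forces $e_r(X_1,\ldots,X_k)=0$, i.e.\ $\lambda=0$, contradicting $\lambda>0$. Hence $\bm{x}$ has no zero coordinate.

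The only delicate ingredient is the perturbation estimate: one must confirm that the $t^p$ correction coming from $\ell_p$-normalization is genuinely $o(t)$ when $p>1$, so that a positive link sum at $v$ yields a strict increase in $P_H$ under normalization. The remainder of the proof is a clean application of the symmetric-function identity that factors both the link sum at $v$ and the total $p$-energy of $\bm{x}$ through the part-sums $X_1,\ldots,X_k$.
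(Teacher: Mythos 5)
Your proof is correct. Note that the paper does not prove this lemma at all: it is imported verbatim as Proposition 11 of Kang--Nikiforov--Yuan \cite{Kang-Nikiforov-Yuan2015}, so there is no in-text argument to compare against, and your write-up supplies a legitimate self-contained proof. Both steps check out. Since $x_v=0$, the identity $P_H(\bm{y}(t))=\lambda+r!\,t\,S_v$ is exact, normalization only costs a factor $(1+t^p)^{-r/p}=1-O(t^p)$, and $t^p=o(t)$ for $p>1$, so maximality of $\bm{x}$ forces the link sum $S_v$ to vanish; then edge-maximality of the $k$-partite structure gives $S_v=e_{r-1}(X_2,\dots,X_k)$ and $\lambda=r!\,e_r(X_1,\dots,X_k)$, and $S_v=0$ with nonnegative part-sums leaves at most $r-2$ positive values among $X_2,\dots,X_k$, hence at most $r-1$ positive $X_j$ in total, killing $e_r$ and contradicting $\lambda\geq r!\,e(H)\,n^{-r/p}>0$. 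The nontriviality caveat you flag (at least one edge, equivalently at least $r$ nonempty parts) is genuinely needed, since for an edgeless hypergraph the statement fails; with that understood, your perturbation-plus-symmetric-function argument is the standard and complete route to this positivity statement, and nothing further is required.
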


\begin{lemma}[{\cite[Proposition 12]{Kang-Nikiforov-Yuan2015}}]\label{lem:subgraph-spectral-radius}
Let $p \geq 1$, and let $H$ be an $r$-graph such that every nonnegative vector
corresponding to $\lambda^{(p)} (H)$ is positive. If $G$ is a subgraph of $H$, then 
$\lambda^{(p)} (G) < \lambda^{(p)} (H)$, unless $G=H$.
\end{lemma}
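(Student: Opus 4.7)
The plan is to compare $\lambda^{(p)}(G)$ and $\lambda^{(p)}(H)$ by lifting an optimal eigenvector of $G$ to the vertex set of $H$ and then using the positivity hypothesis to rule out equality. First I would pick $\bm{y}\in\mathbb{S}_{p,+}^{|V(G)|-1}$ with $P_G(\bm{y})=\lambda^{(p)}(G)$ (such a vector exists by the discussion following the definition of $\lambda^{(p)}$). Extend $\bm{y}$ to a vector $\bm{x}\in\mathbb{R}^{V(H)}$ by setting $x_v = y_v$ for $v\in V(G)$ and $x_v = 0$ otherwise; then $\bm{x}\in\mathbb{S}_{p,+}^{|V(H)|-1}$.

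The key computation is the trivial inequality
\[
P_H(\bm{x}) = r!\sum_{e\in E(H)}\prod_{v\in e}x_v \;\geq\; r!\sum_{e\in E(G)}\prod_{v\in e}y_v = P_G(\bm{y}) = \lambda^{(p)}(G),
\]
since all summands are nonnegative and $E(G)\subseteq E(H)$. Combined with $\lambda^{(p)}(H)\geq P_H(\bm{x})$ from the variational definition, this already gives $\lambda^{(p)}(H)\geq\lambda^{(p)}(G)$.

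It remains to upgrade the inequality to a strict one whenever $G\neq H$. Assume for contradiction that $\lambda^{(p)}(H)=\lambda^{(p)}(G)$. Then $\bm{x}$ attains the maximum defining $\lambda^{(p)}(H)$, so $\bm{x}$ is a nonnegative eigenvector of $H$, and by the hypothesis of the lemma it must be strictly positive. I would then split into two cases. If $V(G)\subsetneq V(H)$, some coordinate of $\bm{x}$ is $0$ by construction, contradicting positivity. If $V(G)=V(H)$ but $E(G)\subsetneq E(H)$, pick any edge $e^{*}\in E(H)\setminus E(G)$; then $\prod_{v\in e^{*}}x_v>0$, so the previous inequality is strict, again contradicting $P_H(\bm{x})=\lambda^{(p)}(G)$. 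Either way we force $G=H$, completing the proof.

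There is no real obstacle here: the argument is a one-shot extension-of-eigenvector trick, and the only subtle point worth stating carefully is that the existence of a nonnegative maximizer (guaranteed earlier in the paper) lets us apply the positivity hypothesis directly to the lifted vector $\bm{x}$.
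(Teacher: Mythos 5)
Your argument is correct. Note that the paper does not prove this lemma at all — it is quoted verbatim from Kang--Nikiforov--Yuan \cite{Kang-Nikiforov-Yuan2015}, Proposition 12 — so there is no in-paper proof to compare against; your zero-extension argument is the standard one for this kind of statement. The two points that need care are both handled: the extended vector $\bm{x}$ still has $\|\bm{x}\|_p=1$ and, under the equality assumption, satisfies $P_H(\bm{x})=\lambda^{(p)}(H)$, so it is a nonnegative eigenvector in the paper's sense and the positivity hypothesis legitimately applies to it; and in the case $V(G)=V(H)$, $E(G)\subsetneq E(H)$, that positivity makes the missing edge contribute strictly, giving $P_H(\bm{x})>\lambda^{(p)}(H)$, a contradiction. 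No gaps.
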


We conclude this section with the following lemma, which can be proved by induction or double counting.

\begin{lemma}[{\cite[Lemma 12]{Cioaba-Feng-Tait-Zhang2020}}]\label{lem:intersection-sets}
Let $A_1,A_2,\ldots,A_k$ be $k$ finite sets. Then 
\[
|A_1\cap A_2\cap\cdots\cap A_k| \geq \sum_{i=1}^k |A_i| - (k-1) \bigg|\bigcup_{i=1}^k A_i\bigg|.
\]
\end{lemma}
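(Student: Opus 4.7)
The plan is to prove the inequality by a pointwise double counting with indicator functions. Set $U = \bigcup_{i=1}^k A_i$ and, for each $x \in U$, let $f(x) = |\{i \in [k] : x \in A_i\}|$, so that
\[
\sum_{i=1}^k |A_i| = \sum_{x \in U} f(x).
\]
The key pointwise bound is
\[
f(x) \leq (k-1) + \mathbf{1}_{A_1 \cap \cdots \cap A_k}(x) \quad \text{for every } x \in U.
\]
This is immediate: if $x$ lies in all $k$ of the $A_i$ then $f(x) = k$ matches the right-hand side exactly, and otherwise $f(x) \leq k-1$ while the right-hand side equals $k-1$.

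Summing this pointwise inequality over $x \in U$ then yields
\[
\sum_{i=1}^k |A_i| \leq (k-1)|U| + |A_1 \cap \cdots \cap A_k|,
\]
and rearranging gives the claimed bound. An equally short alternative is induction on $k$, with base case $k=2$ being the identity $|A_1 \cap A_2| = |A_1| + |A_2| - |A_1 \cup A_2|$; for the inductive step, set $B = A_1 \cap \cdots \cap A_{k-1}$, apply the $k = 2$ case to $B$ and $A_k$, invoke the inductive hypothesis on $B$, and use the monotonicity $|B \cup A_k| \leq \big|\bigcup_{i=1}^k A_i\big|$ together with $\big|\bigcup_{i=1}^{k-1} A_i\big| \leq \big|\bigcup_{i=1}^k A_i\big|$ to consolidate the union terms.

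There is no genuine obstacle here; the statement is an elementary set-theoretic inequality, and the only point requiring attention is verifying the pointwise bound, whose content is merely that any element of $U$ not belonging to the total intersection is missed by at least one of the $A_i$. I would favor the double-counting proof, since it is two lines long and makes the coefficient $k-1$ transparent, whereas the inductive argument hides it behind iterated two-set inclusion–exclusion.
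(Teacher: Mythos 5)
Your proof is correct: the pointwise bound $f(x)\leq (k-1)+\mathbf{1}_{A_1\cap\cdots\cap A_k}(x)$ summed over the union gives exactly the stated inequality, and the inductive alternative also goes through. The paper does not spell out a proof but notes the lemma ``can be proved by induction or double counting,'' so your argument is essentially the intended one.
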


\section{Hypergraph spectral stability results}
\label{sec3}

This section aims to demonstrate spectral stability results for $H_{k+1}^{(r)}$ 
and $F_{k+1}^{(r)}$ respectively, which are essential in substantiating our main results.

\begin{theorem}[Spectral Stability of $H_{k+1}^{(r)}$]\label{thm:spectral-stability-expanded-clique}
Let $p>1$ and $k\geq r\geq 3$. For any $\varepsilon > 0$, there are $\delta = \delta(k,r,\varepsilon) > 0$
and $n_0 = n_0(k,r,\varepsilon)$ such that the following holds for all $n>n_0$:
If $H$ is an $n$-vertex $H_{k+1}^{(r)}$-free $r$-graph with
\[
\lambda^{(p)} (H) > (1 - \delta) \frac{(k)_r}{k^r} n^{r(1-1/p)},
\]
then $H$ can be transformed to $T_r(n, k)$ by adding and deleting at most $\varepsilon\binom{n}{r}$ edge.
\end{theorem}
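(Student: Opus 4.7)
The plan is to reduce the spectral stability claim to its combinatorial counterpart: Mubayi's stability theorem for $\mathcal{K}_{k+1}^{(r)}$-free $r$-graphs \cite{Mubayi2006}, sharpened via Pikhurko's arguments \cite{Pikhurko2013}, yields that for every $\varepsilon > 0$ there is $\delta' > 0$ such that every sufficiently large $n$-vertex $H_{k+1}^{(r)}$-free $r$-graph with at least $(1-\delta') t_r(n,k)$ edges can be transformed into $T_r(n,k)$ by at most $\varepsilon \binom{n}{r}$ edge additions and deletions. Taking this as a black box, the task becomes translating the spectral hypothesis $\lambda^{(p)}(H) > (1-\delta) \frac{(k)_r}{k^r} n^{r(1-1/p)}$ into the edge-count hypothesis $e(H) \geq (1-\delta') t_r(n,k)$.

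For this translation I would fix a nonnegative eigenvector $\bm{x} \in \mathbb{S}_{p,+}^{n-1}$ with $P_H(\bm{x}) = \lambda^{(p)}(H)$. The naive bound from Lemma \ref{lem:size-upper-bound} alone forfeits a multiplicative factor $\bigl((k)_r/k^r\bigr)^{1/(p-1)} < 1$, so it does not suffice; a finer analysis of $\bm{x}$ is required. I would introduce a threshold $\eta = \eta(\delta, p, r, k) > 0$, define the bulk $B := \{v \in V(H) : x_v \geq \eta n^{-1/p}\}$, and argue by direct substitution in the Rayleigh form \eqref{eq:definition-p-spectral-radius} that the edges meeting $V(H) \setminus B$ contribute only $O(\eta^{r-1})\lambda^{(p)}(H)$ to $P_H(\bm{x})$. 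Hence the induced sub-hypergraph $H[B]$ carries nearly all of the spectral mass. Feeding the eigenequation \eqref{eq:eigenequation} at each $v \in B$ together with Pikhurko's upper bound $e(H) \leq t_r(n,k)$ (Corollary \ref{coro:Pikhurko2013}) and Lemma \ref{lem:size-upper-bound} for the matching upper estimate on $\lambda^{(p)}$, one shows that the weights $\{x_v : v \in B\}$ cluster within a factor $(1 \pm o_\delta(1))$ of $|B|^{-1/p}$. With near-uniform weights on the bulk, a direct computation of $P_H(\bm{x})$ converts the spectral lower bound into the desired edge-count lower bound $e(H) \geq (1-\delta')t_r(n,k)$.

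The main obstacle is the concentration step, as one must rule out the scenario in which a small number of very heavy vertices inflates $\lambda^{(p)}(H)$ without raising the edge count proportionally. This is handled by noting, via the eigenequation, that any vertex $v$ with $x_v \gg n^{-1/p}$ must have a dense $(r-1)$-uniform link, which combined with the $H_{k+1}^{(r)}$-freeness of $H$ forces a Pikhurko-type Tur\'an-density bound on the link hypergraph, and iterating this across the heavy vertices contradicts the lower bound $\lambda^{(p)}(H) > (1-\delta) \frac{(k)_r}{k^r} n^{r(1-1/p)}$ unless the heavy cluster is negligible. The nested-epsilon chain then proceeds in the standard order: given $\varepsilon$, select $\delta'$ via the combinatorial stability, then $\eta$ from the bulk/tail quantitative estimates, then $\delta$ from the concentration argument, and finally $n_0$ large enough to absorb all lower-order error terms.
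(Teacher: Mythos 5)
Your high-level reduction (spectral hypothesis $\Rightarrow$ edge-count hypothesis $\Rightarrow$ combinatorial stability) has the right shape, and the combinatorial stability of $H_{k+1}^{(r)}$ that you invoke as a black box is indeed available from Mubayi--Pikhurko. The gap is in the translation step, which is the actual content of the theorem, and your mechanism for it --- bulk/tail decomposition plus eigenvector concentration --- does not work as described. First, the concentration claim is false as stated: take a subgraph of $T_r(n,k)$ in which $\Theta(\delta n)$ vertices of one part retain only half of their edges. This graph is $k$-partite, hence $H_{k+1}^{(r)}$-free, and satisfies $\lambda^{(p)}(H)>(1-\delta)\frac{(k)_r}{k^r}n^{r(1-1/p)}$ after adjusting constants, yet those vertices lie in your bulk $B$ while their weights are bounded away from $|B|^{-1/p}$ by a constant factor; at best one can hope for near-uniformity outside an exceptional set of $\Theta(\delta n)$ vertices, and then your final ``direct computation'' has to be redone and is exactly the deferred hard part. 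Second, the tools you cite cannot deliver even the corrected version in the stated generality: the eigenequation \eqref{eq:eigenequation} controls a heavy vertex only through a factor $x_v^{p-r}$ (as in $\lambda^{(p)}x_z^{p-r}\le (r-1)!\,d_H(z)$), which has leverage only for $p\ge r$ --- this is precisely why Sections \ref{sec4}--\ref{sec5} of the paper assume $p\ge r$ --- whereas Theorem \ref{thm:spectral-stability-expanded-clique} is asserted for every $p>1$, and for $p$ close to $1$ optimal vectors need not be close to uniform at all (at $p=1$ the optimizer for $T_r(n,k)$ itself is supported on only $k$ vertices). So the concentration route, even if completed, would prove a weaker theorem.

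The paper closes the constant-factor loss you correctly identify in Lemma \ref{lem:size-upper-bound} by a different and eigenvector-free device. One first deletes the $O(n^{r-1})$ edges containing a pair of codegree at most $d=O(n^{r-3})$; by Proposition \ref{prop:weyl-inequality} and Lemma \ref{lem:size-upper-bound} this changes $\lambda^{(p)}$ by at most $\delta\frac{(k)_r}{k^r}n^{r(1-1/p)}$, and the cleaned graph $H_1$ is $\mathcal{K}_{k+1}^{(r)}$-free (Pikhurko's greedy re-embedding of the expansion sets). Then the Power Mean inequality with $y_u=x_u^p$ gives $\lambda^{(p)}(H_1)\le (r!\,e(H_1))^{1-1/p}\,\pi_{\lambda}(\mathcal{K}_{k+1}^{(r)})^{1/p}$, and the Lagrangian density bound $\pi_{\lambda}(\mathcal{K}_{k+1}^{(r)})\le \frac{(k)_r}{k^r}$ (Proposition \ref{prop:Lagrangian-density-1} via Frankl--R\"odl minimal support and Maclaurin's inequality) supplies exactly the right constant in place of the lossy one, valid for every $p>1$; Mubayi's stability (Lemma \ref{lem:stability-for-edge-1}) then finishes via Lemma \ref{lem:general-criterion}. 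That Lagrangian ingredient is what is missing from your sketch. A further, more architectural point: your use of Pikhurko's extremal number $e(H)\le t_r(n,k)$ (Corollary \ref{coro:Pikhurko2013}) is logically admissible but undercuts the paper's design, in which that edge result is recovered as a $p\to\infty$ corollary of proofs that never use it.
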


\begin{theorem}[Spectral Stability of $F_{k+1}^{(r)}$]\label{thm:spectral-stability-Fan}
Let $p>1$ and $k\geq r\geq 3$. For any $\varepsilon > 0$, there are $\delta = \delta(k,r,\varepsilon) > 0$
and $n_0 = n_0(k,r,\varepsilon)$ such that the following holds for all $n>n_0$:
If $H$ is an $n$-vertex $F_{k+1}^{(r)}$-free $r$-graph with
\[
\lambda^{(p)} (H) > (1 - \delta) \frac{(k)_r}{k^r} n^{r(1-1/p)},
\]
then $H$ can be transformed to $T_r(n, k)$ by adding and deleting at most $\varepsilon\binom{n}{r}$ edge.
\end{theorem}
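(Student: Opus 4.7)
My plan is to transport the proof of the companion result, Theorem~\ref{thm:spectral-stability-expanded-clique}, almost verbatim, with the combinatorial input replaced by the structural stability for $F_{k+1}^{(r)}$ of Mubayi and Pikhurko. The only essential change is that whenever $H_{k+1}^{(r)}$-freeness is used in the local analysis, the corresponding statement for $F_{k+1}^{(r)}$ must be re-justified; these are very similar in flavour since in both forbidden configurations the expanded edges cover all pairs of a $(k+1)$-set, so the same local embedding arguments adapt.

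The argument then proceeds in three steps. First, fix a nonnegative eigenvector $\bm{x}\in\mathbb{S}_{p,+}^{n-1}$ of $\lambda^{(p)}(H)$. Applying the eigenequation~\eqref{eq:eigenequation} at a vertex where $\bm{x}$ attains its maximum, together with the trivial bound $\Delta(H)\leq \binom{n-1}{r-1}$, forces $\|\bm{x}\|_\infty \leq C(k,r,p)\cdot n^{-1/p}$. Substituting this into the elementary estimate $\lambda^{(p)}(H) \leq r!\,e(H)\,\|\bm{x}\|_\infty^r$ already yields $e(H)\geq c_1 n^r$; a bootstrapping argument, iterating the eigenequation once more and using the $F_{k+1}^{(r)}$-free hypothesis to rule out configurations that would create high-coordinate vertices, refines this to $e(H) \geq (1-\eta(\delta))\,t_r(n,k)$ with $\eta(\delta)\to 0$ as $\delta \to 0$. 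Secondly, invoke the Mubayi--Pikhurko structural stability for $F_{k+1}^{(r)}$: for every $\varepsilon>0$ there is $\delta'=\delta'(k,r,\varepsilon)>0$ so that any $n$-vertex $F_{k+1}^{(r)}$-free $r$-graph with at least $(1-\delta')\,t_r(n,k)$ edges lies within edit distance $\varepsilon\binom{n}{r}$ of $T_r(n,k)$. Finally, choose $\delta$ small enough that $\eta(\delta)<\delta'(\varepsilon)$, and combine the two conclusions.

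The main obstacle is extracting the sharp edge-count bound $e(H)\geq (1-\eta(\delta))\,t_r(n,k)$ in the first step. A direct application of Lemma~\ref{lem:size-upper-bound} gives only $e(H)\geq \bigl(\tfrac{(k)_r}{k^r}\bigr)^{p/(p-1)} n^r/r!$, which is strictly smaller than $t_r(n,k)$ by a constant factor and is therefore too weak to feed into the combinatorial stability. Overcoming this requires genuinely using the $F_{k+1}^{(r)}$-freeness at the local level: if a vertex $v$ has eigenvector coordinate substantially larger than $n^{-1/p}$, then its link must be unusually dense, and a density/embedding argument (mirroring the one used for $H_{k+1}^{(r)}$ in the proof of Theorem~\ref{thm:spectral-stability-expanded-clique}) shows that such a configuration would force a copy of $F_{k+1}^{(r)}$ inside $H$. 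This rules out large coordinates, pins $\bm{x}$ close to the uniform vector, and hence pins $e(H)$ close to $t_r(n,k)$.
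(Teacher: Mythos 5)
There is a genuine gap, and it sits exactly at the point you flag as ``the main obstacle.'' The paper closes that obstacle not by any eigenvector-uniformity bootstrapping, but by a Lagrangian-density argument that you never invoke: by the Power Mean inequality, $\lambda^{(p)}(H)\leq (r!\,e(H))^{1-1/p}\,\pi_{\lambda}(\mathcal{F})^{1/p}$ (Lemma~\ref{lem:general-criterion}), and $\pi_{\lambda}(\mathcal{F}_{k+1}^{(r)})\leq \frac{(k)_r}{k^r}$ (Proposition~\ref{prop:Lagrangian-density-1}, via Frankl--R\"odl and Maclaurin). The extra factor $\pi_{\lambda}^{1/p}$ is precisely what upgrades the crude bound of Lemma~\ref{lem:size-upper-bound} to $e(H)\geq (1-\eta(\delta))\frac{(k)_r}{k^r}\binom{n}{r}$, and it works for every $p>1$. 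Your substitute does not: the step ``eigenequation at the maximum coordinate plus $\Delta(H)\leq\binom{n-1}{r-1}$ forces $\|\bm{x}\|_\infty\leq C n^{-1/p}$'' uses $x_z^{p-r}\leq (r-1)!\,d_H(z)/\lambda^{(p)}(H)$ and hence only makes sense for $p>r$, whereas the theorem is stated for all $p>1$; and the proposed mechanism ``a vertex with coordinate substantially larger than $n^{-1/p}$ has an unusually dense link, which forces a copy of $F_{k+1}^{(r)}$'' is not correct as stated --- $F_{k+1}^{(r)}$ contains two disjoint edges, so even a vertex whose link is complete (a full star) contains no copy of it. Pinning the eigenvector near the uniform vector for near-extremal hypergraphs is itself a nontrivial result that normally uses structural information as input, so the ``bootstrapping'' sentence is a placeholder, not an argument.

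A secondary issue is the combinatorial input. The stability theorem of Mubayi--Pikhurko used in the paper (Lemma~\ref{lem:stability-for-edge-2}) is for the \emph{family} $\mathcal{F}_{k+1}^{(r)}$, not for the single hypergraph $F_{k+1}^{(r)}$; the paper therefore first passes from $H$ to a subhypergraph $H_2$ by deleting the $O(n^{r-1})$ edges containing a low-codegree pair, proves $H_2$ is $\mathcal{F}_{k+1}^{(r)}$-free (this is where $F_{k+1}^{(r)}$-freeness of $H$ is actually used, via a greedy expansion of a core), checks via Weyl's inequality and Lemma~\ref{lem:size-upper-bound} that $\lambda^{(p)}(H_2)$ is still large, and only then applies the spectral stability of the family (Lemma~\ref{lem:spectral-stability-Fan}). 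If you want to quote stability for $F_{k+1}^{(r)}$ itself, you must either verify it is stated in that form in the literature or reproduce this codegree-cleaning reduction; your proposal does neither. So while your high-level plan (mirror Theorem~\ref{thm:spectral-stability-expanded-clique} with Mubayi--Pikhurko as the combinatorial input) matches the paper, the two concrete steps that make it work --- the Lagrangian density bound and the reduction to the family via codegree cleaning --- are missing, and the replacement you sketch would not go through.
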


In order to prove Theorem \ref{thm:spectral-stability-expanded-clique} and 
Theorem \ref{thm:spectral-stability-Fan}, we begin by revisiting relevant 
definitions and proving several preliminary lemmas. 

\subsection{Lagrangian density and spectral stability}

Let $k\geq r\geq 2$. 
We say that a family of $r$-graph $\mathcal{F}$ is \emph{stable} with respect to $T_r(n,k)$, if
for any $\varepsilon > 0$ there exists some positive $\delta$  
such that if $H$ is an $n$-vertex $\mathcal{F}$-free $r$-graph with at least 
$(1 - \delta) \frac{(k)_r}{k^r} \binom{n}{r}$ edges, then $H$ is 
$\varepsilon\binom{n}{r}$-close to $T_r(n,k)$, i.e., $H$ can be transformed 
to $T_r(n, k)$ by adding and deleting at most $\varepsilon\binom{n}{r}$ edges.

Likewise, we say that a family of $r$-graph $\mathcal{F}$ is \emph{spectrally stable} with respect 
to $T_r(n,k)$, if for any $\varepsilon > 0$ there exists $\delta > 0$ such that 
if $H$ is an $n$-vertex $\mathcal{F}$-free $r$-graph with  
$\lambda^{(p)} (H) > (1 - \delta) \frac{(k)_r}{k^r} n^{r(1-1/p)}$, then $H$ 
is $\varepsilon\binom{n}{r}$-close to $T_r(n,k)$.

Given a family $\mathcal{F}$ of $r$-graphs, the \emph{Lagrangian density} $\pi_{\lambda}(\mathcal{F})$ 
of $\mathcal{F}$ is defined as 
\[ 
\pi_{\lambda}(\mathcal{F}) := \sup\{\lambda^{(1)}(H): H\ \text{is}\ \mathcal{F} \text{-free}\}.
\]
The Lagrangian density is closely related to the Tur\'an density. It is not hard to see that 
$\pi (\mathcal{F}) \leq\pi_{\lambda} (\mathcal{F})$ (see, e.g. \cite{Jiang-Peng-Wu2018}). 
Given $r$-graphs $F$ and $H$ we say $f: V(F) \to V(H)$ is a \emph{homomorphism} if it 
preserves edges, i.e., $f(e)\in E(H)$ for all $e\in E(F)$. We say that $H$ is \emph{$F$-hom-free} 
if there is no homomorphism from $F$ to $H$. For a family $\mathcal{F}$ of $r$-graphs, 
$H$ is \emph{$\mathcal{F}$-hom-free} if it is $F$-hom-free for all $F\in\mathcal{F}$. 
Sidorenko \cite{Sidorenko1987} showed that $\pi (\mathcal{F})$ is equal to the supremum 
of the Lagrangians of all $\mathcal{F}$-hom-free $r$-graphs (see also \cite[Section 3]{Keevash2011}), i.e., 
\begin{equation}\label{eq:Hom-free-density}
\pi(\mathcal{F}) = \sup\{\lambda^{(1)} (H): H\ \text{is}\ \mathcal{F}\text{-hom-free}\}.
\end{equation}
 
We first establish a general criterion that guarantee an $r$-graph is spectrally 
stable with respect to $T_r(n,k)$. This criterion will play a key role in 
the proofs of Theorem \ref{thm:spectral-stability-expanded-clique} and 
Theorem \ref{thm:spectral-stability-Fan}.

\begin{lemma}\label{lem:general-criterion}
Let $k\geq r \geq 2$ and $p > 1$. Let $\mathcal{F}$ be a family of $r$-graph such that 
\begin{enumerate}
\item[$(1)$] $\mathcal{F}$ is stable with respect to $T_r(n,k)$.
\item[$(2)$] $\pi_{\lambda} (\mathcal{F}) \leq \frac{(k)_r}{k^r}$.
\end{enumerate}
Then $\mathcal{F}$ is spectrally stable with respect to $T_r(n,k)$.
\end{lemma}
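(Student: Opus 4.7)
The plan is to use the Lagrangian density hypothesis~(2) to force the eigenvector realizing $\lambda^{(p)}(H)$ to be nearly uniform, then to bootstrap near-uniformity into a lower bound on $e(H)$, and finally to invoke the edge stability hypothesis~(1). First I would pick an eigenvector $\bm{x}\in\mathbb{S}_{p,+}^{n-1}$ with $P_H(\bm{x})=\lambda^{(p)}(H)$ and pass to $\bm{y}=\bm{x}/\|\bm{x}\|_1$, a feasible vector for the Lagrangian functional. Using $P_H(\bm{y})\le\lambda^{(1)}(H)\le\pi_\lambda(\mathcal{F})\le(k)_r/k^r$ together with $P_H(\bm{y})=\lambda^{(p)}(H)/\|\bm{x}\|_1^r$ and the hypothesized lower bound $\lambda^{(p)}(H)>(1-\delta)\tfrac{(k)_r}{k^r}n^{r(1-1/p)}$, I would deduce $\|\bm{x}\|_1^{\,r}\ge(1-\delta)n^{r(1-1/p)}$. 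Since H\"older's inequality already gives the opposite bound $\|\bm{x}\|_1\le n^{1-1/p}\|\bm{x}\|_p=n^{1-1/p}$, the vector $\bm{x}$ must essentially saturate H\"older --- a strong hint that $\bm{x}$ is nearly the flat vector with all coordinates $n^{-1/p}$.

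To quantify this, I would rescale $z_v:=n^{1/p}x_v$, so $\sum_v z_v^p=n$ and $\sum_v z_v\ge(1-\delta/r)n$, and consider the AM--GM defect $g(z):=z^p-1-p(z-1)\ge 0$. Summation gives $\sum_v g(z_v)\le p\delta n/r$. Since $g$ is strictly convex with its unique zero at $z=1$ and grows like $z^p$ at infinity, splitting vertices according to whether $|z_v-1|<\tau$ and choosing $\tau$ to be a small power of $\delta$ (say $\tau=\delta^{1/4}$) would produce a ``typical set'' $T\subseteq V(H)$ with $|V(H)\setminus T|=O(\delta^{1/2}n)$, $\sum_{v\notin T}x_v^p=O(\delta^{1/2})$, and $x_v=(1+O(\tau))n^{-1/p}$ for every $v\in T$.

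Now, using the eigenvector equation~\eqref{eq:eigenequation} and double counting, the contribution to $P_H(\bm{x})$ from edges meeting $V(H)\setminus T$ telescopes to
\[
r!\sum_{e\cap(V\setminus T)\ne\emptyset}\prod_{v\in e}x_v\;\le\;r\,\lambda^{(p)}(H)\sum_{v\notin T}x_v^p\;=\;O(\delta^{1/2})\,\lambda^{(p)}(H),
\]
so edges lying entirely in $T$ must contribute $(1-O(\delta^{1/2}))\lambda^{(p)}(H)$ to $P_H(\bm{x})$; since each such edge contributes at most $(1+\tau)^r n^{-r/p}$, I would conclude
\[
e(H)\;\ge\;e(H[T])\;\ge\;\bigl(1-O(\delta^{1/4})\bigr)\frac{(k)_r}{k^r}\cdot\frac{n^r}{r!}\;\ge\;\bigl(1-O(\delta^{1/4})\bigr)\,t_r(n,k).
\]
Given $\varepsilon>0$, I would then choose $\delta$ small enough that $O(\delta^{1/4})\le\delta_1(k,r,\varepsilon)$, where $\delta_1$ is the constant supplied by the edge stability hypothesis~(1), and appeal directly to that hypothesis to conclude that $H$ can be transformed into $T_r(n,k)$ by adding and deleting at most $\varepsilon\binom{n}{r}$ edges.

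The main obstacle will be Step~2: converting the near-saturation of H\"older's inequality into quantitative control of both the exceptional count $|V(H)\setminus T|$ and, crucially, the exceptional $L^p$ mass $\sum_{v\notin T}x_v^p$, in such a way that the two error terms appearing in Step~3 --- one from perturbing the product $\prod_{v\in e}x_v$ on $T$, and one from edges touching $V(H)\setminus T$ via the eigenequation --- can be absorbed simultaneously with room to spare for the final invocation of edge stability. The remaining steps are essentially bookkeeping plus a clean application of the stability hypothesis.
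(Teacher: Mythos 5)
Your proposal is correct in substance, but it takes a genuinely different and considerably longer route than the paper. The paper's proof never analyzes the shape of the eigenvector at all: writing $y_u=x_u^p$ (so $\|\bm y\|_1=1$) and applying the power-mean/H\"older inequality once to $P_H(\bm x)=r!\sum_e\prod_u y_u^{1/p}$, it gets $\lambda^{(p)}(H)\le (r!\,e(H))^{1-1/p}\,\pi_\lambda(\mathcal F)^{1/p}$ directly, inverts this (with Bernoulli, taking $\delta=(1-1/p)\delta_1$) to obtain $e(H)>(1-\delta_1)\frac{(k)_r}{k^r}\binom nr$, and then invokes hypothesis (1). You instead $\ell_1$-normalize the eigenvector, use hypothesis (2) to force near-saturation of H\"older, run a convexity-defect argument to show the eigenvector is nearly flat off a small exceptional set, control edges meeting that set via the eigenequation (your inequality $r!\sum_{e\cap(V\setminus T)\neq\emptyset}\prod_{v\in e}x_v\le r\lambda^{(p)}(H)\sum_{v\notin T}x_v^p$ does follow by multiplying the eigenequation by $x_v$ and summing, with zero-coordinate vertices contributing nothing), and only then extract the edge lower bound. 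This works, and it buys extra information --- quantitative near-uniformity of the extremal eigenvector, in the spirit of the Cooper--Desai--Sahay results cited in the paper --- but it is not needed for the lemma, and the whole ``typical set'' machinery can be bypassed by the single weighted H\"older step. One small slip to fix: from $\|\bm x\|_1^r\ge(1-\delta)n^{r(1-1/p)}$ you get $\sum_v z_v\ge(1-\delta)^{1/r}n\ge(1-\delta)n$, not $(1-\delta/r)n$ (concavity gives $(1-\delta)^{1/r}\le 1-\delta/r$); this only changes a constant in the defect bound $\sum_v g(z_v)\le p\delta n$ and does not affect the argument.
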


\begin{proof}
Let $\varepsilon > 0$ be given. Since $\mathcal{F}$ is stable with respect to $T_r(n,k)$, 
there exists some positive $\delta_1$ such that any $n$-vertex $\mathcal{F}$-free 
$r$-graph with at least $(1 - \delta_1) \frac{(k)_r}{k^r} \binom{n}{r}$ edges 
is $\varepsilon\binom{n}{r}$-close to $T_r(n,k)$.

Let $H$ be an $n$-vertex $\mathcal{F}$-free $r$-graph, and $\bm{x}\in\mathbb{S}_{p,+}^{n-1}$ 
be an eigenvector corresponding to $\lambda^{(p)} (H)$. Let $\bm{y}$ be a vector of 
dimension $n$ with $y_u = x_u^p$ for $u\in V(G)$. By the definition of $p$-spectral 
radius and Power Mean inequality, we see
\begin{align*}
\lambda^{(p)} (H) 
& = r! \sum_{e\in E(H)} \prod_{u\in e} x_u \\
& = r! \sum_{e\in E(H)} \prod_{u\in e} y_u^{1/p} \\
& \leq r! (e(H))^{1-1/p} \left(\sum_{e\in E(H)} \prod_{u\in e} y_u\right)^{1/p}. 
\end{align*}
Taking into account that $\|\bm{y}\|_1 = 1$ and considering the definition of $\pi_{\lambda} (\mathcal{F})$, we arrive at 
\[
\sum_{e\in E(H)} \prod_{u\in e} y_u \leq \frac{\lambda^{(1)} (H)}{r!}
\leq \frac{\pi_{\lambda} (\mathcal{F})}{r!}.
\]
Combining these two inequalities, we immediately obtain that  
\begin{align}
\lambda^{(p)} (H) 
& \leq (r!\cdot e(H))^{1-1/p} \cdot \pi_{\lambda}(\mathcal{F})^{1/p} \label{eq:used-in-last-section} \\
& \leq (r!\cdot e(H))^{1-1/p} \bigg(\frac{(k)_r}{k^r}\bigg)^{1/p}. \label{eq:temporary-1}
\end{align}

On the other hand, choose $\delta = (1-1/p) \delta_1$ and assume that 
$\lambda^{(p)} (H) > (1 - \delta) \frac{(k)_r}{k^r} n^{r(1-1/p)}$. 
In light of \eqref{eq:temporary-1}, we deduce that
\[
(r! \cdot e(H))^{1-1/p} \cdot \bigg(\frac{(k)_r}{k^r}\bigg)^{1/p}
> (1 - \delta) \frac{(k)_r}{k^r} n^{r(1-1/p)},
\]
which, together with Bernoulli inequality, yields that 
\begin{align*}
e(H) & > (1-\delta)^{p/(p-1)} \frac{(k)_r}{k^r} \binom{n}{r} \\
& > \Big(1 - \frac{p\delta}{p-1} \Big) \frac{(k)_r}{k^r} \binom{n}{r} \\
& = (1 - \delta_1 ) \frac{(k)_r}{k^r} \binom{n}{r}.
\end{align*} 
Hence, $H$ is $\varepsilon \binom{n}{r}$-close to $T_r(n,k)$, 
completing the proof of Lemma \ref{lem:general-criterion}.
\end{proof}

\subsection{Spectral stability of $H_{k+1}^{(r)}$}

The main aim of this subsection is to prove Theorem \ref{thm:spectral-stability-expanded-clique}. 
To finish the proof, we first demonstrate the spectral stability result of $\mathcal{K}_{k+1}^{(r)}$ 
in light of Lemma \ref{lem:general-criterion}. Subsequently, we reduce an $H_{k+1}^{(r)}$-free 
$r$-graph on $n$ vertices to a $\mathcal{K}_{k+1}^{(r)}$-free $r$-graph by removing $O(n^{r-1})$ 
edges. During this process, we show that the $p$-spectral radius does not decrease much via some 
spectral inequalities. Finally, by combining these two steps, we achieve the spectral stability 
of $H_{k+1}^{(r)}$.

Given a vector $\bm{x}\in\mathbb{R}^n$, the \emph{support set} $S$ of $\bm{x}$ is the set of 
the indices of non-zero elements in $\bm{x}$, i.e., $S = \{i\in [n]: x_i\neq 0\}$.

\begin{lemma}[{\cite[Theorem 2.3]{Frankl-Rodl1984}}]\label{lem:Lagrangian-covered}
Let $H$ be an $r$-graph. Let $\bm{z}$ be an eigenvector corresponding to $\lambda^{(1)}(H)$ with 
$\|\bm{z}\|_1 = 1$ such that the support set $S$ of $\bm{z}$ is minimal. Then for 
each $u,v\in S$, there is an edge in $E(H)$ containing both $u$ and $v$. 
\end{lemma}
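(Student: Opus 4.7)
The plan is to prove the lemma by contradiction via a mass-shifting argument on the Lagrangian. Suppose, toward a contradiction, that $u, v \in S$ are such that no edge of $H$ contains both. Specializing the eigenvalue equation \eqref{eq:eigenequation} to $p = 1$ and using $z_u, z_v > 0$, we obtain
\[
L_u := (r-1)! \sum_{\{u, i_2, \ldots, i_r\} \in E(H)} z_{i_2} \cdots z_{i_r} \;=\; \lambda^{(1)}(H) \;=\; L_v.
\]

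The next step is to perturb $\bm{z}$ by transferring mass between the coordinates $u$ and $v$. I define $\bm{z}(t)$ by setting $z(t)_u = z_u + t$ and $z(t)_v = z_v - t$, and leaving every other coordinate equal to the corresponding entry of $\bm{z}$. Then $\|\bm{z}(t)\|_1 = 1$ and $\bm{z}(t)\in\mathbb{S}_{1,+}^{n-1}$ for every $t \in [-z_u, z_v]$. Partition $E(H)$ into four classes according to which of $u, v$ each edge contains; by assumption, the class of edges containing both is empty, so expanding the products in $P_H(\bm{z}(t))$ produces an \emph{affine} function of $t$, namely
\[
P_H(\bm{z}(t)) \;=\; P_H(\bm{z}) + r\,t\,(L_u - L_v) \;=\; \lambda^{(1)}(H).
\]
Hence $\bm{z}(t)$ is itself an eigenvector corresponding to $\lambda^{(1)}(H)$ throughout the interval.

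Taking $t = z_v$ then yields a nonnegative unit-$\ell_1$ vector with $z(t)_v = 0$ and $z(t)_u = z_u + z_v > 0$, whose support is $S \setminus \{v\} \subsetneq S$; this contradicts the minimality of $S$ and completes the argument.

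The proof is short and formally elementary, so I do not expect a serious obstacle. The only point that warrants a little care is verifying that the ``no common edge'' hypothesis is exactly what prevents any quadratic-in-$t$ correction from appearing in $P_H(\bm{z}(t))$: this linearity is what lets us promote the first-order stationarity supplied by the $p=1$ eigenvalue equation to an equality valid along the entire segment $[-z_u, z_v]$, rather than only tangentially at $t = 0$, and thereby lets us reach the boundary of the simplex where one coordinate is zeroed out.
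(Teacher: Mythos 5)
Your proof is correct: the paper does not prove this lemma at all (it imports it from Frankl--R\"odl, Theorem 2.3), and your weight-shifting argument --- zero slope of the affine function $t\mapsto P_H(\bm{z}(t))$ via the $p=1$ eigenvalue equation, then pushing $t$ to the endpoint $t=z_v$ to kill a support coordinate and contradict minimality of $S$ --- is exactly the standard argument from that source. The only cosmetic point is that $\bm{z}$ should be taken nonnegative (as the paper does when it applies the lemma), which is harmless since one may always pass to the entrywise absolute value.
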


Using Lemma \ref{lem:Lagrangian-covered}, we obtain the following proposition. 

\begin{proposition}\label{prop:Lagrangian-density-1}
$\pi_{\lambda}(\mathcal{F}_{k+1}^{(r)}) \leq \frac{(k)_r}{k^r}$.
\end{proposition}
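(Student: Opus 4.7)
My plan is to apply the minimum-support eigenvector technique supplied by Lemma~\ref{lem:Lagrangian-covered}. Concretely, fix a vector $\bm z\in\mathbb S_{1,+}^{n-1}$ attaining $\lambda^{(1)}(H)$ whose support $S=\{u:z_u>0\}$ has minimum cardinality among all optimizers. If $\lambda^{(1)}(H)=0$ the inequality is trivial, so assume $\lambda^{(1)}(H)>0$. Then some edge $e^*\in E(H)$ must lie entirely in $S$, because any edge meeting $V(H)\setminus S$ contributes $0$ to $P_H(\bm z)$; in particular $|S|\ge r$. Moreover, Lemma~\ref{lem:Lagrangian-covered} guarantees that every pair $\{u,v\}\subseteq S$ is covered by some edge of $H$.

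The key claim I would then prove is that $|S|\le k$. Suppose for contradiction that $|S|\ge k+1$, and pick any $(k+1)$-subset $C\subseteq S$ with $e^*\subseteq C$, which is possible since $|S|\ge k+1\ge r$ and $e^*\subseteq S$. Consider the subhypergraph $F'\subseteq H$ consisting of $e^*$ together with one edge of $H$ covering each pair in $\binom{C}{2}$. By construction, $F'$ has a $(k+1)$-set core $C$ in which every pair is covered and the edge $e^*$ is contained entirely. Passing to a subhypergraph of $F'$ that is minimal with respect to these two properties then yields a member of $\mathcal{F}_{k+1}^{(r)}$ sitting inside $H$, contradicting the $\mathcal{F}_{k+1}^{(r)}$-freeness of $H$.

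Once $|S|\le k$ is established, I would finish by noting that $\lambda^{(1)}(H)=P_H(\bm z)$ depends only on edges contained in $S$, and therefore $\lambda^{(1)}(H)\le\lambda^{(1)}(K_{|S|}^{(r)})$, since adding edges to a hypergraph only (weakly) increases its Lagrangian. The Lagrangian of $K_m^{(r)}$ is easily computed at the uniform weight vector to be $r!\binom{m}{r}m^{-r}=\frac{(m)_r}{m^r}$, and this is in fact the maximum (a standard consequence of the inequality $r!\prod_{u\in e}x_u\le (\sum_{u\in e}x_u)^r/\binom{r}{1}^{\!-1}$ type argument, or of Motzkin--Straus style monotonicity). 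Because each factor in $\frac{(m)_r}{m^r}=\prod_{i=0}^{r-1}\!\bigl(1-\tfrac{i}{m}\bigr)$ is increasing in $m$ for $m\ge r$, we conclude
\[
\lambda^{(1)}(H)\le\frac{(|S|)_r}{|S|^r}\le\frac{(k)_r}{k^r},
\]
which proves the proposition since $H$ was an arbitrary $\mathcal{F}_{k+1}^{(r)}$-free $r$-graph.

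The only step that requires any genuine care is the combinatorial reduction in the second paragraph: one must confirm that passing to a minimal subhypergraph of $F'$ truly preserves both the ``edge inside the core $C$'' condition and the ``every pair in $C$ is covered'' condition, so that the resulting hypergraph lies in $\mathcal{F}_{k+1}^{(r)}$. This is immediate from the definition of $\mathcal{F}_{k+1}^{(r)}$, but it is the structural heart of the argument; everything else is standard Lagrangian bookkeeping.
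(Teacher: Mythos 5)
Your proposal is correct and follows essentially the same route as the paper: a minimum-support optimizer for $\lambda^{(1)}$, Lemma~\ref{lem:Lagrangian-covered} to cover all pairs of the support $S$, the ``edge inside $S$'' argument forcing $|S|\le k$ (else a minimal configuration in $\mathcal{F}_{k+1}^{(r)}$ appears), and then a comparison with the complete $r$-graph. The only cosmetic difference is the final step, where the paper bounds the sum over $K_k^{(r)}$ directly via Maclaurin's inequality instead of quoting $\lambda^{(1)}(K_m^{(r)})=(m)_r/m^r$ together with its monotonicity in $m$.
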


\begin{proof}
Let $H$ be an $\mathcal{F}_{k+1}^{(r)}$-free $r$-graph with at least one edge, and 
let $\bm{z}$ be a nonnegative eigenvector corresponding to $\lambda^{(1)}(H)$ with 
minimal suppose set $S$. We first show that $|S|\leq k$ by contradiction. If $|S| > k$, 
then there is no edge lies entirely in $S$, otherwise we will get a member of 
$\mathcal{F}_{k+1}^{(r)}$ by Lemma \ref{lem:Lagrangian-covered}. Hence,
\[
\lambda^{(1)} (H) = r!\sum_{e\in E(H)} \prod_{u\in e} z_u = 0,
\]
a contradiction with $\lambda^{(1)} (H) > 0$ as $e(H) \geq 1$.

Now, let $|S|\leq k$. Then
\begin{equation}\label{eq:temporary-2}
\lambda^{(1)} (H) = r!\sum_{e\in E(H)} \prod_{u\in e} z_u \leq r!\sum_{e\in E(K_k^{(r)})} \prod_{u\in e} z_u. 
\end{equation}
By Maclaurin's inequality, we obtain 
\[
\left(\frac{\sum_{e\in (K_k^{(r)})} \prod_{u\in e} z_u}{\binom{k}{r}}\right)^{1/r}
\leq \frac{\sum_{u\in S} z_u}{\binom{k}{1}} = \frac{1}{k},
\]
which, together with \eqref{eq:temporary-2}, gives that
$\lambda^{(1)} (H) \leq \frac{(k)_r}{k^r}$.
\end{proof}

A straightaway corollary of Proposition \ref{prop:Lagrangian-density-1} is 
$\pi_{\lambda}(\mathcal{K}_{k+1}^{(r)}) \leq \frac{(k)_r}{k^r}$ since 
$\mathcal{F}_{k+1}^{(r)} \subseteq \mathcal{K}_{k+1}^{(r)}$.

\begin{corollary}\label{coro:Lagrangian-density-1}
$\pi_{\lambda}(\mathcal{K}_{k+1}^{(r)}) \leq \frac{(k)_r}{k^r}$.
\end{corollary}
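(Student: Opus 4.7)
The plan is to deduce the corollary directly from Proposition \ref{prop:Lagrangian-density-1} by observing the set-theoretic inclusion $\mathcal{F}_{k+1}^{(r)} \subseteq \mathcal{K}_{k+1}^{(r)}$. To verify this inclusion, I would take any $F \in \mathcal{F}_{k+1}^{(r)}$ and check the two defining conditions of $\mathcal{K}_{k+1}^{(r)}$: the $(k+1)$-set $C$ witnessing membership in $\mathcal{F}_{k+1}^{(r)}$ already provides the required $(k+1)$-set in which every pair is covered by an edge, and the minimality condition on $F$ forces $|E(F)| \leq \binom{k+1}{2}$, since each edge must be necessary for covering at least one pair in $C$ (otherwise it could be removed contradicting minimality), and there are only $\binom{k+1}{2}$ pairs to cover.

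Once this inclusion is in hand, any $\mathcal{K}_{k+1}^{(r)}$-free $r$-graph is automatically $\mathcal{F}_{k+1}^{(r)}$-free, so the supremum of the Lagrangian over the former (smaller) class is bounded by the supremum over the latter. Therefore
\[
\pi_{\lambda}(\mathcal{K}_{k+1}^{(r)}) \leq \pi_{\lambda}(\mathcal{F}_{k+1}^{(r)}) \leq \frac{(k)_r}{k^r},
\]
the last inequality being Proposition \ref{prop:Lagrangian-density-1}. There is no real obstacle here; the only minor subtlety is confirming that minimality in the definition of $\mathcal{F}_{k+1}^{(r)}$ does indeed enforce the edge-count bound required for membership in $\mathcal{K}_{k+1}^{(r)}$, but this is an immediate consequence of the pair-covering requirement.
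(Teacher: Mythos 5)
Your proposal is correct and is essentially the paper's own argument: the paper deduces the corollary in one line from Proposition \ref{prop:Lagrangian-density-1} via the inclusion $\mathcal{F}_{k+1}^{(r)} \subseteq \mathcal{K}_{k+1}^{(r)}$, which reverses into $\pi_{\lambda}(\mathcal{K}_{k+1}^{(r)}) \leq \pi_{\lambda}(\mathcal{F}_{k+1}^{(r)})$ exactly as you write. One small caveat in your justification of the edge bound: an edge of a minimal $F$ need not be the unique cover of any pair of $C$ --- it may be needed only as the unique edge lying entirely inside $C$ --- but since such an edge itself covers $\binom{r}{2} \geq 1$ pairs of $C$ for which no other edge can be a unique cover, the count $|E(F)| \leq 1 + \binom{k+1}{2} - \binom{r}{2} \leq \binom{k+1}{2}$ still goes through (alternatively, one could simply pick one covering edge per pair of $C$ to exhibit a member of $\mathcal{K}_{k+1}^{(r)}$ inside $F$, which suffices for the free-ness implication).
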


Mubayi \cite{Mubayi2006} proved that $\mathcal{K}_{k+1}^{(r)}$ is stable with
respect to $T_r(n,k)$.

\begin{lemma}[{\cite[Theorem 3]{Mubayi2006}}]\label{lem:stability-for-edge-1}
Let $k\geq r\geq 2$. For any $\varepsilon > 0$ there are $\delta = \delta(k,r,\varepsilon) > 0$
and $n_0 = n_0(k,r,\varepsilon)$ such that any $\mathcal{K}_{k+1}^{(r)}$-free $r$-graph $H$ of order 
$n\geq n_0$ and 
\[
e(H) > (1-\delta) \frac{(k)_r}{k^r} \binom{n}{r}
\]
is $\varepsilon\binom{n}{r}$-close to $T_r(n, k)$.
\end{lemma}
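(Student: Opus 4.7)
The plan is to reduce this stability lemma to the classical graph-theoretic stability for $K_{k+1}$-free graphs via the \emph{cover graph} construction. Given an $r$-graph $H$, define $G(H)$ to be the ordinary graph on $V(H)$ in which $\{u,v\}$ is an edge if and only if some $e \in E(H)$ contains both $u$ and $v$. The crucial observation is that $H$ is $\mathcal{K}_{k+1}^{(r)}$-free if and only if $G(H)$ is $K_{k+1}$-free, since a ``core'' of size $k+1$ in $H$ is precisely a $(k+1)$-clique in $G(H)$.

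Next I would use the elementary relation $e(H) \leq N_r(G(H))$, where $N_r(G)$ denotes the number of copies of $K_r$ in $G$: each $r$-edge of $H$ determines a distinct $r$-clique in the cover graph. Zykov's theorem asserts that among $n$-vertex $K_{k+1}$-free graphs, the Tur\'an graph $T(n,k)$ maximizes $N_r$, with maximum value exactly $t_r(n,k)$. Combined with the hypothesis $e(H) \geq (1-\delta)\,t_r(n,k)$, this yields $N_r(G(H)) \geq (1-\delta)\,t_r(n,k)$, so $G(H)$ is a $K_{k+1}$-free graph whose $K_r$-count is near-extremal.

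The next step is a quantitative stability statement for the $K_r$-count: any $n$-vertex $K_{k+1}$-free graph $G$ with $N_r(G) \geq (1-\delta)\,t_r(n,k)$ must be within $\varepsilon_1 n^2$ edges of $T(n,k)$, where $\varepsilon_1 = \varepsilon_1(\delta, k, r) \to 0$ as $\delta \to 0$. I would establish this by combining a Zykov-symmetrization argument (reducing to complete multipartite graphs) with a perturbation analysis showing that deviating from $k$ equal-sized parts strictly decreases $N_r$; alternatively, one can deduce it directly from the Erd\H{o}s--Simonovits stability theorem for $K_{k+1}$-free graphs together with a supersaturation count for $K_r$'s. The resulting near-balanced $k$-partition $V(H) = V_1 \cup \cdots \cup V_k$ then serves as the target partition for $T_r(n,k)$.

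Finally, to translate closeness of $G(H)$ to $T(n,k)$ into closeness of $H$ to $T_r(n,k)$, one bounds two defects: (i) edges of $H$ containing two vertices in a common part $V_i$, each of which corresponds to an $r$-clique of $G(H)$ that is absent in $T(n,k)$; and (ii) edges of $T_r(n,k)$ (with respect to the above partition) missing from $H$, which are controlled by the gap $N_r(G(H)) - e(H) \leq \delta\, t_r(n,k)$. Both contributions can be made at most $\varepsilon\binom{n}{r}$ by choosing $\delta$ small enough. The main obstacle will be the quantitative stability for $N_r$ in the third step, since tracking how $N_r$ decreases under perturbations of part sizes requires careful calculation; once this is in hand, the remainder of the argument reduces to routine bookkeeping.
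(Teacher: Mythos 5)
First, a point of comparison: the paper does not prove this lemma at all --- it is imported verbatim as Theorem 3 of Mubayi \cite{Mubayi2006} --- so there is no in-paper argument to measure you against; your outline is in effect a reconstruction of Mubayi's original strategy (covered-pairs graph, Zykov's theorem, graph stability, transfer back to the hypergraph). The skeleton is sound: the equivalence ``$H$ is $\mathcal{K}_{k+1}^{(r)}$-free iff $G(H)$ is $K_{k+1}$-free'' is correct (for a $(k+1)$-clique of $G(H)$ choose one covering hyperedge per pair; this gives a subgraph with at most $\binom{k+1}{2}$ edges, hence a member of $\mathcal{K}_{k+1}^{(r)}$), the injection $e(H)\le N_r(G(H))$ and Zykov's theorem are used correctly, and the final transfer is routine: hyperedges meeting some part twice number at most $(\text{intra-part pairs of } G(H))\cdot\binom{n-2}{r-2}\le\varepsilon_1 n^2\binom{n-2}{r-2}$, while missing transversal $r$-sets number at most $\bigl(N_r(G(H))-e(H)\bigr)$ plus $(\text{missing pairs})\cdot\binom{n-2}{r-2}$, both $O\bigl((\delta+\varepsilon_1)n^r\bigr)$.

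The genuine soft spot is exactly the step you flag --- the stability version of Zykov's theorem --- and neither of your two suggested routes works as stated. Zykov symmetrization does not preserve edit distance: it shows the maximizer of $N_r$ is a balanced complete multipartite graph, but symmetrizing a near-extremal $G$ may move it $\Omega(n^2)$ edges away from itself, so closeness of the symmetrized graph to $T(n,k)$ says nothing directly about $G$; extra work (e.g.\ degree cleaning and induction) would be needed. And ``supersaturation'' points the wrong way: you need ``many $K_r$'s implies many edges'' for $K_{k+1}$-free graphs, not the converse, and Kruskal--Katona alone is too weak for $r\ge 3$ (it only gives $e(G)\gtrsim((k)_r)^{2/r}n^2/(2k^2)<(1-\tfrac1k)\tfrac{n^2}{2}$). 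What does close the gap is the Kruskal--Katona-type inequality for graphs of clique number at most $k$ (Fisher--Ryan/Frohmader), $N_r(G)\le\binom{k}{r}\bigl(e(G)/\binom{k}{2}\bigr)^{r/2}$: combined with $N_r(G(H))\ge e(H)\ge(1-\delta)\frac{(k)_r}{k^r}\binom{n}{r}$ it forces $e(G(H))\ge(1-\delta')\bigl(1-\frac{1}{k}\bigr)\frac{n^2}{2}$, whereupon the Erd\H{o}s--Simonovits stability theorem supplies the balanced partition with $|E(G(H))\,\triangle\,E(T(n,k))|\le\varepsilon_1 n^2$ and your bookkeeping finishes the proof. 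So the plan is viable, but this ingredient (or an equivalent clique-count stability lemma) must be supplied explicitly; as written the key step is not yet proved.
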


By Lemma \ref{lem:general-criterion}, Lemma \ref{lem:stability-for-edge-1} and 
Corollary \ref{coro:Lagrangian-density-1}, we obtain the spectral stability of 
$\mathcal{K}_{k+1}^{(r)}$ immediately.

\begin{lemma}[Spectral Stability of $\mathcal{K}_{k+1}^{(r)}$]\label{lem:spectral-stability-general-clique}
Let $p>1$ and $k\geq r\geq 2$. For any $\varepsilon > 0$, there are $\delta = \delta(k,r,\varepsilon) > 0$
and $n_0 = n_0(k,r,\varepsilon)$ such that the following holds for all $n>n_0$:
If $H$ is an $n$-vertex $\mathcal{K}_{k+1}^{(r)}$-free $r$-graph with
\[
\lambda^{(p)} (H) > (1 - \delta) \frac{(k)_r}{k^r} n^{r(1-1/p)},
\]
then $H$ is $\varepsilon \binom{n}{r}$-close to $T_r(n, k)$.
\end{lemma}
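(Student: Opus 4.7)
The plan is to obtain the conclusion as a direct instance of the general criterion Lemma \ref{lem:general-criterion}, by verifying its two hypotheses for the family $\mathcal{F} = \mathcal{K}_{k+1}^{(r)}$. No new estimate is needed; the work is to cite the right ingredient for each hypothesis.

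First, I would check that $\mathcal{K}_{k+1}^{(r)}$ is stable with respect to $T_r(n,k)$ in the edge-counting sense used in Lemma \ref{lem:general-criterion}. This is precisely Mubayi's edge-stability theorem, recorded as Lemma \ref{lem:stability-for-edge-1}: for every $\varepsilon>0$ there exists $\delta_1=\delta_1(k,r,\varepsilon)>0$ and a threshold $n_1$ such that any $\mathcal{K}_{k+1}^{(r)}$-free $r$-graph on $n\geq n_1$ vertices with more than $(1-\delta_1)\frac{(k)_r}{k^r}\binom{n}{r}$ edges is $\varepsilon\binom{n}{r}$-close to $T_r(n,k)$. So hypothesis (1) of Lemma \ref{lem:general-criterion} is satisfied with this $\delta_1$.

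Next, I would verify the Lagrangian density bound $\pi_\lambda(\mathcal{K}_{k+1}^{(r)})\leq \frac{(k)_r}{k^r}$, which is hypothesis (2). This is exactly Corollary \ref{coro:Lagrangian-density-1}, itself an immediate consequence of Proposition \ref{prop:Lagrangian-density-1} via the containment $\mathcal{F}_{k+1}^{(r)}\subseteq \mathcal{K}_{k+1}^{(r)}$: any $\mathcal{K}_{k+1}^{(r)}$-free $r$-graph is in particular $\mathcal{F}_{k+1}^{(r)}$-free, so the supremum of $\lambda^{(1)}$ over the former class is at most that over the latter.

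With both hypotheses in place, I would invoke Lemma \ref{lem:general-criterion} with $\mathcal{F}=\mathcal{K}_{k+1}^{(r)}$: given $\varepsilon>0$, take $\delta_1$ from Lemma \ref{lem:stability-for-edge-1}, set $\delta=(1-1/p)\delta_1$ and $n_0=\max\{n_1,n_{\mathrm{crit}}\}$ as provided in the proof of Lemma \ref{lem:general-criterion}, and the required $p$-spectral lower bound on $\lambda^{(p)}(H)$ converts (via the Power Mean inequality step and Bernoulli's inequality carried out in that proof) into the hypothesis of Lemma \ref{lem:stability-for-edge-1}, yielding that $H$ is $\varepsilon\binom{n}{r}$-close to $T_r(n,k)$. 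Since the two ingredients are already established and the reduction is a direct application of the criterion, there is no substantive obstacle; the proof amounts to a one-line combination of Lemma \ref{lem:general-criterion}, Lemma \ref{lem:stability-for-edge-1}, and Corollary \ref{coro:Lagrangian-density-1}.
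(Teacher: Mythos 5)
Your proposal is correct and is exactly the paper's proof: the paper obtains this lemma by combining Lemma \ref{lem:general-criterion} with Lemma \ref{lem:stability-for-edge-1} (hypothesis (1)) and Corollary \ref{coro:Lagrangian-density-1} (hypothesis (2)), just as you do. Your extra remarks on how the containment $\mathcal{F}_{k+1}^{(r)}\subseteq\mathcal{K}_{k+1}^{(r)}$ yields the Lagrangian density bound and how $\delta$ is chosen inside the criterion are consistent with the paper's arguments.
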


Now, we are ready to present the proof of Theorem \ref{thm:spectral-stability-expanded-clique}. 
\par\vspace{3mm}

\noindent\emph{Proof of Theorem \ref{thm:spectral-stability-expanded-clique}}.
Let $\varepsilon > 0$ be given. By Lemma \ref{lem:spectral-stability-general-clique}, there exist 
$\delta_1>0$ and $n_1$ such that any $\mathcal{K}_{k+1}^{(r)}$-free $r$-graph $G$ having $n\geq n_1$ 
vertices and $\lambda^{(p)} (G) > (1 - \delta_1) \frac{(k)_r}{k^r} n^{r(1-1/p)}$ is 
$\frac{\varepsilon}{2} \binom{n}{r}$-close to $T_r(n,k)$. 

Choose 
\[
\delta = \min \bigg\{ \frac{\delta_1}{2}, \Big(\frac{\varepsilon}{2}\Big)^{1-1/p} \frac{k^r}{(k)_r} \bigg\}.
\]
Let $H_1$ be obtained from $H$ by removing all edges that contain a pair whose codegree does not 
exceed $d:= \big(k + 1 + (r-2)\binom{k+1}{2}\big) \binom{n}{r-3}$.
Since the number of pairs of vertices is $\binom{n}{2}$, we obtain 
\begin{align}
e(H\setminus H_1) 
& \leq \bigg(k + 1 + (r-2)\binom{k+1}{2}\bigg) \binom{n}{r-3} \times \binom{n}{2} \nonumber \\
& < \Big(\frac{\delta \cdot (k)_r}{k^r}\Big)^{p/(p-1)} \binom{n}{r}. \label{eq:upper-bound-H-H1}
\end{align}

Now, we show that $H_1$ is $\mathcal{K}_{k+1}^{(r)}$-free, which has been previously 
established by Pikhurko \cite{Pikhurko2013}. Here, we provide a detailed exposition of the proof.
Suppose, to the contrary, that $H_1$ contains a copy of some $F\in \mathcal{K}_{k+1}^{(r)}$ 
with core $C$. Then, every pair from $C$ is covered by an edge of $H_1$. Consequently, 
every pair $\{u,v\}\subseteq C$ has codegree at least $d$ in $H$. This implies that whenever we have a 
partial embedding of $H_{k+1}^{(r)}$ into $H$ with the core $C$, there exists an edge $e\in E(H)$
with $u,v\in e$ such that $e\setminus\{u, v\}$ is disjoint from the rest of the embedding. 
So, $H$ contains $H_{k+1}^{(r)}$ as a subgraph with the core $C$, a contradiction.

Next, we give an estimation of $\lambda^{(p)} (H_1)$. By Proposition \ref{prop:weyl-inequality}, 
\[
\lambda^{(p)} (H) \leq \lambda^{(p)} (H_1) + \lambda^{(p)} (H\setminus H_1).
\]
On the other hand, Lemma \ref{lem:size-upper-bound} implies that
\[
\lambda^{(p)} (H\setminus H_1) < \big(r! \cdot e(H\setminus H_1)\big)^{1-1/p} < 
\frac{\delta \cdot (k)_r}{k^r} \cdot n^{r(1-1/p)},
\]
where the last inequality due to \eqref{eq:upper-bound-H-H1}. Hence, 
\begin{align*}
\lambda^{(p)} (H_1) 
& \geq \lambda^{(p)}(H) - \lambda^{(p)} (H\setminus H_1) \\
& > (1 - 2\delta) \frac{(k)_r}{k^r} n^{r(1-1/p)} \\
& \geq (1 - \delta_1) \frac{(k)_r}{k^r} n^{r(1-1/p)}.
\end{align*}
It follows from Lemma \ref{lem:spectral-stability-general-clique} that 
$H_1$ is $\frac{\varepsilon}{2} \binom{n}{r}$-close to $T_r(n,k)$.
Recall that $H_1$ is obtained from $H$ by removing at most 
\[ 
\Big(\frac{\delta \cdot (k)_r}{k^r} \Big)^{p/(p-1)} \binom{n}{r} \leq \frac{\varepsilon}{2} \binom{n}{r}
\]
edges by \eqref{eq:upper-bound-H-H1}. Hence, $H$ is 
$\varepsilon \binom{n}{r}$-close to $T_r(n,k)$. This completes the proof. \hfill\ensuremath{\Box}

\subsection{Spectral stability of $F_{k+1}^{(r)}$}

This subsection is dedicated to proving Theorem \ref{thm:spectral-stability-Fan}, which 
follows a similar approach to Theorem \ref{thm:spectral-stability-expanded-clique} but 
incorporates several distinct technical details.

Mubayi and Pikhurko \cite{Mubayi-Pikhurko2007} proved that $\mathcal{F}_{k+1}^{(r)}$
is stable with respect to $T_r(n,k)$.

\begin{lemma}[{\cite[Theorem 4]{Mubayi-Pikhurko2007}}]\label{lem:stability-for-edge-2}
For any $k\geq r\geq 2$ and $\varepsilon > 0$, there exist $\delta > 0$ and $n_0$
such that the following holds for all $n > n_0$: If $H$ is an $n$-vertex 
$\mathcal{F}_{k+1}^{(r)}$-free $r$-graph with at least $(1-\delta) \frac{(k)_r}{k^r} \binom{n}{r}$ 
edges, then $H$ is $\varepsilon\binom{n}{r}$-close to $T_r(n,k)$.
\end{lemma}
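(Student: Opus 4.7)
The plan is to combine the Lagrangian density bound of \autoref{prop:Lagrangian-density-1} with a weak hypergraph regularity / Lagrangian-stability argument in the style of Frankl--R\"odl and Mubayi. Since \autoref{prop:Lagrangian-density-1} and \eqref{eq:Hom-free-density} give $\pi(\mathcal{F}_{k+1}^{(r)}) \leq \frac{(k)_r}{k^r}$, and $T_r(n,k)$ is readily seen to be $\mathcal{F}_{k+1}^{(r)}$-free (no $(k+1)$-set inside a $k$-partite $r$-graph can have all pairs covered, let alone contain an interior edge), the Tur\'an density equals $\frac{(k)_r}{k^r}$. I would apply the weak hypergraph regularity lemma to partition $V(H)$ into $t$ clusters of nearly equal size, and form the cluster $r$-graph $R$ whose edges are dense regular $r$-tuples; a standard counting lemma ensures that $R$ is essentially $\mathcal{F}_{k+1}^{(r)}$-hom-free, since any homomorphism would pull back to an embedding of some $F\in\mathcal{F}_{k+1}^{(r)}$ inside $H$.

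Next, \eqref{eq:Hom-free-density} together with \autoref{prop:Lagrangian-density-1} forces $\lambda^{(1)}(R) \leq \frac{(k)_r}{k^r}$, while the hypothesis $e(H) \geq (1-\delta)\frac{(k)_r}{k^r}\binom{n}{r}$ drives $R$ to be near-extremal for this Lagrangian bound. A stability analysis, applying \autoref{lem:Lagrangian-covered} to a minimum-support optimal weight vector on $R$, confines the support to at most $k$ clusters with weights close to the balanced distribution realizing $T_r(k,k)$; an off-diagonal perturbation bound (Maclaurin-type) quantifies how far from $T_r(k,k)$ the near-extremal $R$ can be. Pulling back through the regularity partition yields a $k$-partition $V(H) = W_1 \cup \cdots \cup W_k$ into nearly equal parts such that all but $o(n^r)$ edges of $H$ are transversal with respect to this partition.

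The final step removes residual bad edges, i.e.\ edges not respecting the partition. For any surviving bad edge $e$, the high global density of $H$ together with typical codegrees permits a greedy extension of $e$ to a $(k+1)$-set $C \supseteq e$ in which every pair is covered by a distinct edge of $H$, yielding an embedded member of $\mathcal{F}_{k+1}^{(r)}$ --- a contradiction. Counting surviving bad edges bounds the defect by $\varepsilon\binom{n}{r}$, after which a small rebalancing step aligns part sizes with $T_r(n,k)$. I expect the main obstacle to be this greedy extension: each of the $\binom{k+1}{2} - \binom{r}{2}$ required pair-covers must use pairwise disjoint $(r-2)$-vertex sets, demanding a preliminary pruning of pairs of atypically low codegree. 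The pruning itself costs only $O(n^{r-1})$ edges --- easily absorbed into $\varepsilon\binom{n}{r}$ --- but the bookkeeping needed to ensure that both the cluster partition $\{W_i\}$ and the near-balance of part sizes survive the pruning (and the subsequent rebalancing) is where the real technical effort lies.
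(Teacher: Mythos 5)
The paper does not prove this lemma at all: it is quoted verbatim as Theorem~4 of \cite{Mubayi-Pikhurko2007}, so any argument you give is a reconstruction of Mubayi--Pikhurko's stability theorem, and your sketch has a genuine gap at its core. The decisive step --- ``the hypothesis $e(H)\geq(1-\delta)\frac{(k)_r}{k^r}\binom{n}{r}$ drives $R$ to be near-extremal for the Lagrangian bound, and a stability analysis via Lemma~\ref{lem:Lagrangian-covered} plus a Maclaurin-type perturbation confines $R$ to an almost balanced complete $k$-partite structure'' --- is not an argument but a restatement of the theorem one level up. The edge hypothesis only says that the \emph{uniform} weight vector on the cluster hypergraph $R$ nearly attains the Lagrangian maximum; Lemma~\ref{lem:Lagrangian-covered} applies to an \emph{exact} optimizer of minimal support, and together with Maclaurin it yields only the extremal value $\frac{(k)_r}{k^r}$ (that is Proposition~\ref{prop:Lagrangian-density-1}), saying nothing about the global structure of an $\mathcal{F}_{k+1}^{(r)}$-free $r$-graph whose uniform weighting is merely near-optimal. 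The minimal-support optimizer could sit on $k$ clusters while the rest of $R$ is unstructured; ruling this out \emph{is} the stability statement, now for a $t$-vertex $\mathcal{F}_{k+1}^{(r)}$-free cluster graph, i.e.\ the same assertion you set out to prove. Closing this circle is exactly the combinatorial content of Mubayi's and Mubayi--Pikhurko's proofs (careful link/degree and cleaning arguments), and none of it is supplied by the tools you cite. Already for $r=2$ the analogous shortcut would claim Erd\H{o}s--Simonovits stability follows from Motzkin--Straus plus Maclaurin, which it does not.

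Two further points. First, your appeal to ``a standard counting lemma'' for weak hypergraph regularity is not available: weak (vertex-partition) regularity for $r\geq 3$ does not support embedding of general subhypergraphs, so you cannot pull back homomorphisms of arbitrary members of $\mathcal{F}_{k+1}^{(r)}$. This part is repairable: since the family is preserved under blow-ups, $R$ fails to be $\mathcal{F}_{k+1}^{(r)}$-hom-free exactly when there is a homomorphism of the single \emph{linear} hypergraph $F_{k+1}^{(r)}$ into $R$, and for linear targets the weak-regularity embedding lemma (Kohayakawa--Nagle--R\"odl--Schacht) does apply --- but this reduction has to be stated, not assumed. Second, the difficulty you flag at the end is not the real one: for the \emph{family} $\mathcal{F}_{k+1}^{(r)}$ a forbidden configuration only requires every pair of the $(k+1)$-set to be covered by some edge together with one edge inside the core, with no disjointness of the $(r-2)$-sets; the disjointness bookkeeping matters only later, when a single-graph statement about $F_{k+1}^{(r)}$ is reduced to the family via codegree pruning (as in the proof of Theorem~\ref{thm:spectral-stability-Fan}), not in Lemma~\ref{lem:stability-for-edge-2} itself.
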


By Lemma \ref{lem:general-criterion} and Proposition \ref{prop:Lagrangian-density-1} we obtain the 
spectral stability of $\mathcal{F}_{k+1}^{(r)}$ immediately.

\begin{lemma}[Spectral Stability of $\mathcal{F}_{k+1}^{(r)}$]\label{lem:spectral-stability-Fan}
Let $p>1$ and $k\geq r\geq 2$. For any $\varepsilon > 0$, there are $\delta = \delta(k,r,\varepsilon) > 0$
and $n_0 = n_0(k,r,\varepsilon)$ such that the following holds for all $n>n_0$:
If $H$ is an $n$-vertex $\mathcal{F}_{k+1}^{(r)}$-free $r$-graph with
\[
\lambda^{(p)} (H) > (1 - \delta) \frac{(k)_r}{k^r} n^{r(1-1/p)},
\]
then $H$ is $\varepsilon \binom{n}{r}$-close to $T_r(n, k)$.
\end{lemma}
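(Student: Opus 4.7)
The plan is to derive this statement by a direct application of the general criterion from Lemma \ref{lem:general-criterion} to the family $\mathcal{F}_{k+1}^{(r)}$. That criterion reduces a spectral stability statement with respect to $T_r(n,k)$ to verifying two hypotheses: (i) edge-stability of the family with respect to $T_r(n,k)$, and (ii) the Lagrangian density bound $\pi_\lambda(\mathcal{F}_{k+1}^{(r)}) \leq (k)_r/k^r$.

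Hypothesis (i) is exactly the Mubayi--Pikhurko edge-stability theorem already recorded as Lemma \ref{lem:stability-for-edge-2}: for any $\varepsilon>0$ there is $\delta_1>0$ such that any $n$-vertex $\mathcal{F}_{k+1}^{(r)}$-free $r$-graph with at least $(1-\delta_1)(k)_r/k^r\binom{n}{r}$ edges is $\varepsilon\binom{n}{r}$-close to $T_r(n,k)$. Hypothesis (ii) is exactly Proposition \ref{prop:Lagrangian-density-1}, proved earlier via Frankl--R\"odl's observation (Lemma \ref{lem:Lagrangian-covered}) that a minimal-support Lagrangian eigenvector has every pair of its support covered by an edge, combined with Maclaurin's inequality applied to a support of size at most $k$.

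With both hypotheses verified, Lemma \ref{lem:general-criterion} is directly applicable. Concretely, given $\varepsilon>0$ one extracts $\delta_1$ from the edge-stability lemma and sets $\delta=(1-1/p)\delta_1$ as in the proof of Lemma \ref{lem:general-criterion}; the power-mean/Bernoulli chain that bridges $\lambda^{(p)}(H)$, $\lambda^{(1)}(H)$ and $e(H)$ then converts the spectral lower bound $\lambda^{(p)}(H)>(1-\delta)(k)_r/k^r \cdot n^{r(1-1/p)}$ into the edge lower bound $e(H)>(1-\delta_1)(k)_r/k^r\binom{n}{r}$, at which point Lemma \ref{lem:stability-for-edge-2} delivers the required closeness to $T_r(n,k)$. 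There is no substantive obstacle: the whole infrastructure of Section \ref{sec3} (especially the Lagrangian bound in Proposition \ref{prop:Lagrangian-density-1} and the spectral-to-edge reduction packaged in Lemma \ref{lem:general-criterion}) has been built precisely so that this stability result is a one-line corollary, entirely parallel to how Lemma \ref{lem:spectral-stability-general-clique} was obtained from its edge counterpart.
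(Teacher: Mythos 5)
Your proposal is correct and matches the paper's own proof exactly: the paper derives this lemma immediately from Lemma \ref{lem:general-criterion}, using Lemma \ref{lem:stability-for-edge-2} for edge-stability and Proposition \ref{prop:Lagrangian-density-1} for the Lagrangian density bound, just as you describe.
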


We are now prepared to present the proof of Theorem \ref{thm:spectral-stability-Fan}, which follows a 
similar approach to the proof of Theorem \ref{thm:spectral-stability-expanded-clique}.
\par\vspace{3mm}

\noindent\emph{Proof of Theorem \ref{thm:spectral-stability-Fan}}.
Let $\varepsilon > 0$ be given. By Lemma \ref{lem:spectral-stability-Fan}, there exist 
$\delta_2>0$ and $n_2$ such that of any $\mathcal{F}_{k+1}^{(r)}$-free $r$-graph $G$ 
having $n\geq n_2$ vertices and $\lambda^{(p)} (G) > (1 - \delta_2) \frac{(k)_r}{k^r} n^{r(1-1/p)}$ 
is $\frac{\varepsilon}{2} \binom{n}{r}$-close to $T_r(n,k)$. 

Choose 
\[
\delta = \min \bigg\{ \frac{\delta_2}{2}, \Big(\frac{\varepsilon}{2}\Big)^{1-1/p} \frac{k^r}{(k)_r} \bigg\}.
\]
Let $H_2$ be obtained from $H$ by removing all edges that contain a pair whose codegree at most $d$. Then
\begin{equation}\label{eq:upper-bound-H-H2}
e(H\setminus H_2) 
\leq d \times \binom{n}{2} < \bigg(\frac{\delta \cdot (k)_r}{k^r} \bigg)^{p/(p-1)} \binom{n}{r}.
\end{equation}

We now demonstrate that $H_2$ is $\mathcal{F}_{k+1}^{(r)}$-free, as previously established 
by Mubayi and Pikhurko \cite{Mubayi-Pikhurko2007}. Here, we provide a detailed proof.
Suppose, to the contrary, that $H_2$ contains a copy of some $F\in \mathcal{F}_{k+1}^{(r)}$ 
with core $C$ and edge $e\subseteq C$. Then, every pair from $C$ is covered by an edge 
of $H_2$, implying every pair in $C$ has codegree at least $d$ in $H$. Consequently, 
we can greedily choose edges of $H$ that contain all pairs in $\binom{C}{2}\setminus \binom{e}{2}$,
such that these edges intersect $C$ in exactly two vertices and are pairwise disjoint 
outside $C$. The resulting set of the $\binom{k+1}{2} - \binom{r}{2}$ edges, together 
with $e$, forms a copy of $F_{k+1}^{(r)}$ in $H$, which contradicts the assumption that 
$H$ is $F_{k+1}^{(r)}$-free.

In light of Proposition \ref{prop:weyl-inequality}, Lemma \ref{lem:size-upper-bound} 
and \eqref{eq:upper-bound-H-H2}, we find 
\begin{align*}
\lambda^{(p)} (H) 
& \leq \lambda^{(p)} (H_2) + \lambda^{(p)} (H\setminus H_2) \\
& \leq \lambda^{(p)} (H_2) + \big(r! \cdot e(H\setminus H_2)\big)^{1-1/p} \\
& < \lambda^{(p)} (H_2) + \frac{\delta \cdot (k)_r}{k^r} n^{r(1-1/p)}
\end{align*}
As a consequence, 
\begin{align*}
\lambda^{(p)} (H_2) 
& > \lambda^{(p)}(H) - \frac{\delta \cdot (k)_r}{k^r} n^{r(1-1/p)} \\
& > (1 - 2\delta) \frac{(k)_r}{k^r} n^{r(1-1/p)} \\
& \geq (1 - \delta_2) \frac{(k)_r}{k^r} n^{r(1-1/p)}.
\end{align*}
It follows from Lemma \ref{lem:spectral-stability-Fan} that 
$H_2$ is $\frac{\varepsilon}{2} \binom{n}{r}$-close to $T_r(n,k)$.
Recall that $H_2$ is obtained from $H$ by removing at most 
\[ 
\bigg(\frac{\delta \cdot (k)_r}{k^r} \bigg)^{p/(p-1)} \binom{n}{r} 
\leq \frac{\varepsilon}{2} \binom{n}{r}
\]
edges by \eqref{eq:upper-bound-H-H2}. Hence, $H$ is 
$\varepsilon \binom{n}{r}$-close to $T_r(n,k)$. This completes the proof. \hfill\ensuremath{\Box}

\section{Proof of Theorem \ref{thm:main-1}}
\label{sec4}

In this section we give a proof of Theorem \ref{thm:main-1}. Before continuing, 
we need some additional notation and preliminary results. 

Given an $r$-graph $G$ of order $n$, let $\bm{\sigma} = (V_1,V_2,\ldots,V_k)$ be 
a partition of $V(G)$, we define 
\[
f_G(\bm{\sigma}) := \sum_{e\in E(G)} |\{i\in [k]: e\cap V_i \neq \emptyset\}|.
\]
Let $T$ be the complete $k$-partite $r$-graph on $n$ vertices, partitioned according to $\bm{\sigma}$.
We call the edges in $T \setminus G$ \emph{missing} and the edges in $G \setminus T$ \emph{bad}.

For vertices $u, v$ coming from two different parts $V_i$, the pair $\{u,v\}$ is called \emph{sparse} 
if the codegree of $u$ and $v$ is at most 
\begin{equation}\label{eq:constant-d}
d:= \bigg[ k + 1 + (r-2)\binom{k+1}{2}\bigg] \binom{n}{r-3};
\end{equation}
otherwise, the pair $\{u,v\}$ is called \emph{dense}.

\subsection{Preparation for the proof of Theorem \ref{thm:main-1}}

The following result indicates that any $k$-partite $r$-graph, whose size is nearly maximal, 
must have an almost balanced partition of its vertices.

\begin{proposition}[{\cite[Claim 1]{Mubayi2006}}]\label{prop:nearly-equal-Mubayi}
Let $k\geq r\geq 2$. For any $\varepsilon > 0$ there are $\delta > 0$ and $n_0$ such 
that, for any $k$-partite $r$-graph of order $n\geq n_0$ and size at least 
$t_r(n,k) -\delta\binom{n}{r}$, the number of vertices in each part located in 
$((1/k - \varepsilon) n, (1/k + \varepsilon) n)$.
\end{proposition}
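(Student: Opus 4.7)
The plan is to reduce the assertion to a quantitative statement about the $r$-th elementary symmetric polynomial $e_r$ on the $(k-1)$-simplex. Let $(V_1,\ldots,V_k)$ be the $k$ parts of $G$, set $n_i:=|V_i|$ and $x_i:=n_i/n$, so $\sum_i x_i = 1$. Since $G$ is $k$-partite, every edge picks one vertex from each of $r$ distinct parts, giving the trivial upper bound
\[
e(G) \;\leq\; \sum_{S\in\binom{[k]}{r}} \prod_{i\in S} n_i \;=\; n^r\,e_r(x_1,\ldots,x_k).
\]
The vector $(x_1,\ldots,x_k)$ lies in the compact simplex $\Delta:=\{x\in\mathbb{R}_{\geq 0}^{k}:\sum_i x_i=1\}$.

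First I would invoke Maclaurin's inequality to conclude that $e_r$ is strictly maximised on the interior of $\Delta$ at the barycenter $(1/k,\ldots,1/k)$, with value $\binom{k}{r}/k^{r}=(k)_r/(r!\,k^r)$. The boundary case is settled by noting that if some $x_i=0$ then $e_r$ reduces to an elementary symmetric polynomial in $k-1$ variables, and $\binom{k-1}{r}/(k-1)^{r}<\binom{k}{r}/k^{r}$ (the ratio $\binom{k}{r}/k^r=\frac{1}{r!}\prod_{i=0}^{r-1}(1-i/k)$ is strictly increasing in $k$ for $k\geq r$). Since $e_r$ is continuous on the compact set $\Delta$, for every $\varepsilon>0$ there exists $\eta=\eta(k,r,\varepsilon)>0$ with
\[
\max\Bigl\{e_r(x_1,\ldots,x_k): (x_1,\ldots,x_k)\in\Delta,\ \max_i |x_i-1/k|\geq\varepsilon\Bigr\} \;\leq\; \tfrac{(k)_r}{r!\,k^r}-\eta.
\]

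Now I would argue by contradiction: suppose some part $V_j$ has $|n_j-n/k|\geq \varepsilon n$, so $\max_i|x_i-1/k|\geq\varepsilon$. Combining the two displays above yields
\[
e(G) \;\leq\; n^{r}\Bigl(\tfrac{(k)_r}{r!\,k^r}-\eta\Bigr)\;=\; \tfrac{(k)_r}{r!\,k^r}n^{r} - \eta\,n^{r}.
\]
On the other hand, the formula~(2.1) stated in the paper gives $t_r(n,k) = \frac{(k)_r}{r!\,k^r}n^{r}\bigl(1 - O(n^{-1})\bigr)$, hence
\[
t_r(n,k)-e(G) \;\geq\; \eta\,n^{r} - O(n^{r-1}).
\]
Choosing $\delta := \eta\,r!/2$ and taking $n_0=n_0(k,r,\varepsilon)$ large enough that the $O(n^{r-1})$ error is absorbed into the main term, this contradicts the hypothesis $e(G)\geq t_r(n,k)-\delta\binom{n}{r}$. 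Therefore every part has size in $((1/k-\varepsilon)n,(1/k+\varepsilon)n)$, as desired.

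The only real obstacle is extracting a uniform gap $\eta=\eta(k,r,\varepsilon)>0$ from the strict maximisation of $e_r$ at the barycenter; however, this is a routine compactness/continuity argument on the simplex and no sharp quantitative estimate is needed, since $\eta$ is allowed to depend arbitrarily on $k$, $r$, and $\varepsilon$.
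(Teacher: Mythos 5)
Your argument is correct and complete. Note, though, that the paper gives no proof of this proposition at all: it is quoted verbatim from Mubayi's Claim~1 in \cite{Mubayi2006}, so there is no in-paper proof to match against. The closest analogue inside the paper is the proof of Lemma~\ref{lem:size-Vi}, which handles the same ``near-extremal implies balanced parts'' issue by an explicit computation: it isolates the single most-deviant part, writes the edge count through the function $f_{k,r}$, and expands $f_{k,r}(1/k\pm a)$ to extract a quadratic penalty of order $a^2 n^r$, yielding an explicit relation between the deviation and the edge deficit (which the paper actually needs later, in the form of $\varepsilon^{1/r}$-type bounds). Your route is softer: you bound $e(G)\le n^r e_r(x_1,\dots,x_k)$, identify the barycenter as the unique maximizer of $e_r$ on the simplex (strict Maclaurin in the interior, plus the reduction to $k-1$ variables on the boundary, which you correctly supply since Maclaurin's equality case is delicate when coordinates vanish), and then extract a uniform gap $\eta(k,r,\varepsilon)$ by compactness. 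This is shorter and avoids the error-prone expansion, at the cost of giving no explicit dependence of $\delta$ on $\varepsilon$; since the proposition only asserts existence of some $\delta>0$ for each $\varepsilon$, that loss is immaterial here, and your final bookkeeping ($t_r(n,k)=\frac{(k)_r}{r!\,k^r}n^r(1-O(n^{-1}))$, $\delta=\eta\,r!/2$, absorbing the $O(n^{r-1})$ error for $n\ge n_0$) is sound.
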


In order to finish the proof of Theorem \ref{thm:main-1}, we need the following lemma.

\begin{lemma}\label{lem:size-Vi}
Let $k\geq r\geq 3$, and $\varepsilon > 0$. Suppose that $G$ is an $r$-graph on $n$ vertices, 
and $G$ is $\varepsilon \binom{n}{r}$-close to $T_r(n,k)$. Let $\bm{\sigma} = (V_1,V_2,\ldots,V_k)$ 
be a partition of $V(G)$ such that $f_G(\bm{\sigma})$ attains the maximum. Then for each $i\in [k]$,
\[ 
\Big( \frac{1}{k} - \varepsilon^{1/r} \Big) n \leq |V_i| \leq \Big(\frac{1}{k} + \varepsilon^{1/r} \Big) n.
\]
\end{lemma}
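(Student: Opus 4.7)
The plan is to bracket $f_G(\bm{\sigma})$ between a lower bound coming from the closeness hypothesis and an upper bound depending on the part sizes $n_i:=|V_i|$, thereby showing that the $k$-partite sub-hypergraph of $G$ consisting of crossing edges w.r.t.\ $\bm{\sigma}$ is almost as large as $T_r(n,k)$; balancedness will then follow from Proposition~\ref{prop:nearly-equal-Mubayi} (or equivalently, from strict Schur-concavity of the Tur\'an polynomial $e_r(n_1,\ldots,n_k):=\sum_{S\in\binom{[k]}{r}}\prod_{i\in S}n_i$).

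First, the hypothesis supplies a partition $\bm{\tau}=(U_1,\ldots,U_k)$ of $V(G)$ with $|U_i|\in\{\lfloor n/k\rfloor,\lceil n/k\rceil\}$ and $|E(G)\triangle E(T(\bm{\tau}))|\le\varepsilon\binom{n}{r}$, so $|E(G)\cap E(T(\bm{\tau}))|\ge t_r(n,k)-\varepsilon\binom{n}{r}$ and $e(G)\le t_r(n,k)+\varepsilon\binom{n}{r}$. Each edge of $G$ lying in $T(\bm{\tau})$ is $r$-crossing with respect to $\bm{\tau}$ and contributes exactly $r$ to $f_G(\bm{\tau})$; maximality of $\bm{\sigma}$ therefore yields
\[
f_G(\bm{\sigma})\;\ge\;f_G(\bm{\tau})\;\ge\;r\bigl(t_r(n,k)-\varepsilon\tbinom{n}{r}\bigr).
\]
On the other hand, writing $e_{\mathrm{cr}}$ for the number of edges of $G$ that are $r$-crossing w.r.t.\ $\bm{\sigma}$, every such edge contributes $r$ to $f_G(\bm{\sigma})$ while every other edge contributes at most $r-1$, hence
\[
f_G(\bm{\sigma})\;\le\;e_{\mathrm{cr}}+(r-1)e(G)\;\le\;e_{\mathrm{cr}}+(r-1)\bigl(t_r(n,k)+\varepsilon\tbinom{n}{r}\bigr).
\]
Combining these two bounds gives the crucial inequality
\[
e_{\mathrm{cr}}\;\ge\;t_r(n,k)-(2r-1)\varepsilon\tbinom{n}{r};
\]
that is, the $k$-partite sub-hypergraph $G^{\bm{\sigma}}$ of $G$ induced by the partition $\bm{\sigma}$ has nearly the maximum number of edges that any $k$-partite $r$-graph on $n$ vertices can have.

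To finish, I would apply Proposition~\ref{prop:nearly-equal-Mubayi} to $G^{\bm{\sigma}}$ with tolerance $\varepsilon^{1/r}$: it supplies a threshold $\delta(k,r,\varepsilon^{1/r})>0$ such that the near-maximality of $G^{\bm{\sigma}}$ forces $|V_i|\in((1/k-\varepsilon^{1/r})n,(1/k+\varepsilon^{1/r})n)$ for every $i$, so it suffices to check that $(2r-1)\varepsilon\le\delta(k,r,\varepsilon^{1/r})$, which holds for $\varepsilon$ below a threshold depending on $k,r$ (the conclusion is vacuous when $\varepsilon^{1/r}\ge 1-1/k$ since $|V_i|\le n$). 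Alternatively, one can run the Schur-concavity computation directly: writing $n_i=n/k+\delta_i$ with $\sum_i\delta_i=0$, an expansion via elementary symmetric polynomials gives
\[
e_r(n_1,\ldots,n_k)=\tbinom{k}{r}(n/k)^r-\tfrac12\tbinom{k-2}{r-2}(n/k)^{r-2}\sum_i\delta_i^2+O\bigl(n^{r-3}M^3\bigr),
\]
where $M:=\max_i|\delta_i|$, so the deficit $t_r(n,k)-e_{\mathrm{cr}}\ge c_{k,r}M^2 n^{r-2}$ leads to $M\le C_{k,r}\varepsilon^{1/2}n\le\varepsilon^{1/r}n$ for $r\ge 3$ and sufficiently small $\varepsilon$. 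The hard part is precisely this last translation from an $\varepsilon$-deficit at the edge level to an $\varepsilon^{1/r}$-deviation at the part-size level, which requires careful bookkeeping of the cubic-and-higher error terms (split into a small-$M$ regime where the quadratic term dominates the Taylor remainder, and a complementary large-$M$ regime handled by a compactness argument on the simplex giving $e_r/n^r<\binom{k}{r}/k^r-c(k,r)$).
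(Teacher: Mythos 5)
Your proposal is essentially the paper's argument. Step one is the same: maximality of $f_G(\bm{\sigma})$ against the balanced partition coming from the closeness hypothesis shows that the crossing edges with respect to $\bm{\sigma}$ number at least $t_r(n,k)-O_r(\varepsilon)\binom{n}{r}$ (the paper records this as $e(G\setminus T)\le r\varepsilon\binom{n}{r}$ and hence $e(T)\ge t_r(n,k)-(r+1)\varepsilon\binom{n}{r}$; your constant $2r-1$ is equally good). Step two in your ``alternative'' form is also the paper's: convert the edge deficit into a quadratic penalty in the part-size deviations. The paper does this with the one-variable function $f_{k,r}$ evaluated at the size of the part of maximal deviation, whereas you expand the full symmetric polynomial $e_r(n_1,\dots,n_k)$ to second order; your coefficient $-\tfrac12\binom{k-2}{r-2}(n/k)^{r-2}\sum_i\delta_i^2$ is correct, and both versions give deficit $\ge c_{k,r}M^2n^{r-2}$, hence $M\le C_{k,r}\sqrt{\varepsilon}\,n\le\varepsilon^{1/r}n$ for $r\ge 3$ and $\varepsilon$ small, which is exactly the paper's conclusion $a^2\le \tfrac{k^{r+1}}{r!}\varepsilon\le\varepsilon^{2/r}$.

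One caveat: your first proposed finish, invoking Proposition~\ref{prop:nearly-equal-Mubayi} with tolerance $\varepsilon^{1/r}$ and ``checking'' that $(2r-1)\varepsilon\le\delta(k,r,\varepsilon^{1/r})$, is not justified as written. That proposition is purely qualitative; it gives no lower bound on how $\delta$ depends on the tolerance, so for all you know $\delta(k,r,\varepsilon^{1/r})$ decays faster than $\varepsilon$ as $\varepsilon\to 0$. The legitimate use of the proposition --- and the paper's use of it --- is with a fixed (constant) tolerance, only to guarantee that the maximal deviation is $o(n)$ so that the quadratic term dominates the Taylor remainder (your small-$M$ regime); the quantitative $\varepsilon^{1/r}$ bound must then come from the quadratic estimate itself, exactly as in your alternative route. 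With that route as the actual proof, your argument is sound and coincides with the paper's, up to the cosmetic single-variable versus multivariable expansion.
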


\begin{proof}
Let $T$ be the complete $k$-partite $r$-graph on $n$ vertices with vertex partition $\bm{\sigma}$. 
By the definition of $f_G(\bm{\sigma})$, we have 
\[ 
r\bigg[ e(G) - \varepsilon\binom{n}{r} \bigg] \leq f_G(\bm{\sigma})\leq r\cdot e(G) - e(G\setminus T),
\]
from which we immediately obtain 
\begin{equation}\label{eq:size-G-T}
e(G\setminus T) \leq r\varepsilon\binom{n}{r}.
\end{equation}
It follows from \eqref{eq:size-G-T} and $e(G) \geq t_r(n, k) - \varepsilon\binom{n}{r}$ that
\begin{align}
e(T) & \geq e(G) - e(G\setminus T) \nonumber \\ 
& \geq t_r(n,k) - (r+1) \varepsilon\binom{n}{r} \nonumber \\ 
& \geq \binom{k}{r}\left(\frac{n}{k}\right)^r - (r+2)\varepsilon\binom{n}{r}.
\label{eq:e(T)-lower-bound-2}
\end{align}

Next, we will establish an upper bound for $e(T)$. Combining this bound with \eqref{eq:e(T)-lower-bound-2}, 
we shall conclude the desired assertion. To this end, we assume, without loss of generality, that $V_1$ satisfies
\[ 
\Big||V_1|-\frac{n}{k}\Big| = \max_{i\in[k]} \Big\{\Big||V_i| - \frac{n}{k} \Big|\Big\}.
\]
Set $a := \big|\frac{|V_1|}{n} - \frac{1}{k}\big|$ for short. Then $|V_1| = (1/k \pm a) n$. 
By Proposition \ref{prop:nearly-equal-Mubayi} and \eqref{eq:e(T)-lower-bound-2}, $a\ll 1$.

For $0\leq x\leq 1$, let 
\[ 
f_{k,r}(x) := x\binom{k-1}{r-1} \Big(\frac{1-x}{k-1}\Big)^{r-1} + \binom{k-1}{r} \Big(\frac{1-x}{k-1}\Big)^r.
\]
For any $c\in [n]$, one can check that
\[
f_{k,r} \Big(\frac{c}{n}\Big)\cdot n^r 
= c\binom{k-1}{r-1} \Big(\frac{n-c}{k-1}\Big)^{r-1} 
+ \binom{k-1}{r} \Big(\frac{n-c}{k-1}\Big)^r,
\]
from which we obtain that
\begin{equation}\label{eq:se4-temporary}
f_{k,r} \Big(\frac{c}{n}\Big) \cdot n^r \geq c\cdot t_{r-1}(n - c, k-1) + t_r(n - c, k-1).
\end{equation}
Indeed, for each $c$, we can consider the right-hand side of \eqref{eq:se4-temporary} as 
the number of edges in $T_r(n-c, k-1)$ together with $c$ additional vertices each of whose
links is $T_{r-1} (n-c, k-1)$. Let $c = |V_1|$ in \eqref{eq:se4-temporary}, and note that 
$T$ is a $k$-partite $r$-graph. We have 
\begin{equation}\label{eq:e(T)-upper-bound}
e(T) \leq \max\Big\{f_{k,r} \Big(\frac{1}{k} - a\Big) \cdot n^r, ~f_{k,r} \Big(\frac{1}{k} + a\Big) \cdot n^r\Big\}.
\end{equation}

In the following, we estimate $f_{k,r} (1/k - a)$ and $f_{k,r} (1/k + a)$ respectively. Note that 
\begin{align*}
f_{k,r}\left(\frac{1}{k}-a\right) 
& = \left(\frac{1}{k}-a\right)\binom{k-1}{r-1}\left(\frac{1}{k}+\frac{a}{k-1}\right) ^{r-1} 
+ \binom{k-1}{r}\left(\frac{1}{k}+\frac{a}{k-1}\right)^r \\[1mm]
& = \frac{1}{k^{r-1}} \left(\frac{1}{k} - a\right) \binom{k-1}{r-1}\left(1 + \frac{ka}{k-1}\right)^{r-1} 
+ \frac{1}{k^r} \binom{k-1}{r}\left(1 + \frac{ka}{k-1}\right)^r \\[1mm]
& = \frac{1}{k^r} \Big(1 + \frac{ka}{k-1}\Big)^{r-1} \bigg[ (1-ka) \binom{k-1}{r-1} + 
\Big(1 + \frac{ka}{k-1}\Big) \binom{k-1}{r}\bigg] \\[1mm]
& = \frac{1}{k^r} \binom{k}{r} \Big(1 + \frac{ka}{k-1}\Big)^{r-1} \Big(1 - \frac{(r-1)k}{k-1} a\Big).
\end{align*}
Since $a \ll 1$, one can check that
\[
\Big(1 + \frac{ka}{k-1}\Big)^{r-1} \leq 1 + \frac{(r-1)k}{k-1} a + \frac{(r-1)(r-2) k^2}{(k-1)^2} a^2.
\]
It follows that 
\begin{align*}
f_{k,r}\left(\frac{1}{k}-a\right) 
& \leq \frac{1}{k^r} \binom{k}{r} \bigg(1 + \frac{(r-1)k}{k-1} a + \frac{(r-1)(r-2) k^2}{(k-1)^2} a^2\bigg) 
\Big(1 - \frac{(r-1)k}{k-1} a\Big) \\[1mm]
& \leq \frac{1}{k^r} \binom{k}{r} \bigg(1 - \frac{(r-1)k^2}{(k-1)^2} a^2\bigg).
\end{align*}
Similarly, we have 
\[ 
f_{k,r}\left(\frac{1}{k}+a\right) \leq \frac{1}{k^r} \binom{k}{r} 
\bigg(1 - \frac{(r-1)k^2}{(k-1)^2} a^2\bigg).
\] 

Finally, combining these with \eqref{eq:e(T)-lower-bound-2} and \eqref{eq:e(T)-upper-bound}, 
we conclude that
\[ 
\binom{k}{r}\left(\frac{n}{k}\right)^r - (k+2)\varepsilon\binom{n}{r} \leq 
\frac{1}{k^r} \binom{k}{r} \bigg(1 - \frac{(r-1)k^2}{(k-1)^2} a^2\bigg) n^r.
\]
Solving the inequality we deduce that 
$a^2 \leq \frac{k^{r+1}}{r!} \varepsilon \leq \varepsilon^{2/r}$, which completes the proof.
\end{proof}

\subsection{Proof of Theorem \ref{thm:main-1}}

For the remainder of this section, we always assume that $k\geq r\geq 2$, $p\geq r$, and $H$ is 
an $n$-vertex $H_{k+1}^{(r)}$-free $r$-graph attaining the maximum $p$-spectral 
radius. We will show that $H\cong T_r(n,k)$ through the following lemmas, thereby 
concluding the proof of Theorem \ref{thm:main-1}. When $r = 2$, Theorem \ref{thm:main-1} 
is exact the spectral Tur\'an theorem \cite{Kang-Nikiforov2014}. We therefore assume 
that $k\geq r > 2$. 

The proof of Theorem \ref{thm:main-1} is outlined as follows.
\begin{itemize}\setlength{\itemsep}{-2pt}
\item We start by showing that $H$ is $\varepsilon\binom{n}{r}$-close to $T_r(n,k)$ via the 
spectral stability theorem for $H_{k+1}^{(r)}$ (Theorem \ref{thm:spectral-stability-expanded-clique}).
Furthermore, $H$ has a large multipartite subgraph on parts $V_1,\ldots,V_k$ such that
$|V_i| = (1 + o(1)) n/k$ for each $i\in [k]$ (Corollary \ref{coro:equal-size-Vi}).

\item We show that the number of vertices that contained in at least $\varepsilon^{1/r^2} n$ 
sparse pairs (denoted the set of these vertices by $L$) is bounded from above by $o(n)$ 
(Lemma \ref{lem:size-L}). By removing the vertices in $L$ we show that $V_i\setminus L$ 
is a strong independent set of $H$ for each $i\in [k]$ (Lemma \ref{lem:independent-set}).

\item Using Corollary \ref{coro:equal-size-Vi}, Lemma \ref{lem:size-L}, and Lemma \ref{lem:independent-set}, 
we show that for any vertex $u$, the size of set $\{e\in E_u: e\cap L\neq\emptyset\}$ is 
bounded from above by $o(n^{r-1})$ (Lemma \ref{lem:upper-bound-L(u)}), and the degree of 
each vertex in $L$ is not excessively large (Lemma \ref{lem:degree-vertex-in-L}).

\item Finally, we will finish the proof by showing $L = \emptyset$. This can be achieved by 
introducing a modified Zykov's symmetrization process, in conjunction with previous lemmas.
\end{itemize}

Since $T_r(n,k)$ is $H_{k+1}^{(r)}$-free, by Lemma \ref{lem:lambdaT}, 
$\lambda^{(p)} (H) > (1-O(n^{-1})) \cdot \frac{(k)_r}{k^r} n^{r(1-1/p)}$. Together with 
Theorem \ref{thm:spectral-stability-expanded-clique}, $H$ is $\varepsilon\binom{n}{r}$-close 
to $T_r(n,k)$. In our proof, we always tacitly assume that $\varepsilon$ is sufficiently 
small and $n$ is large enough so that the associated inequalities hold.

Let $\bm{\sigma} = (V_1,V_2,\ldots,V_k)$ 
be a partition of $V(H)$ such that $f_H(\bm{\sigma})$ attains the maximum.
An immediate corollary of Lemma \ref{lem:size-Vi} is as follows.

\begin{corollary}\label{coro:equal-size-Vi}
For each $i\in [k]$,
\[
\Big( \frac{1}{k} - \varepsilon^{1/r} \Big) n \leq |V_i| \leq \Big(\frac{1}{k} + \varepsilon^{1/r} \Big) n.
\]
\end{corollary}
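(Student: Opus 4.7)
The corollary is a direct specialization of Lemma \ref{lem:size-Vi} to the $r$-graph $H$ under consideration, so the plan is essentially a one-line verification that the hypotheses of that lemma are in place.

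First, I would recall from the setup just before the corollary that $H$ is an $n$-vertex $H_{k+1}^{(r)}$-free $r$-graph of maximum $p$-spectral radius, and that $T_r(n,k)$ is itself $H_{k+1}^{(r)}$-free. Combined with the lower bound $\lambda^{(p)}(T_r(n,k)) \geq (1 - O(n^{-1}))\frac{(k)_r}{k^r} n^{r(1-1/p)}$ from Lemma \ref{lem:lambdaT}, this gives $\lambda^{(p)}(H) \geq \lambda^{(p)}(T_r(n,k)) > (1-\delta)\frac{(k)_r}{k^r} n^{r(1-1/p)}$ for any prescribed $\delta > 0$, provided $n$ is large. Applying the spectral stability Theorem \ref{thm:spectral-stability-expanded-clique} with threshold $\delta$ chosen small enough (as a function of $\varepsilon$), we conclude that $H$ is $\varepsilon\binom{n}{r}$-close to $T_r(n,k)$.

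Next, I would invoke Lemma \ref{lem:size-Vi} with $G = H$ and the partition $\bm{\sigma} = (V_1,\ldots,V_k)$ of $V(H)$ defined just above the corollary, which by construction maximizes $f_H(\bm{\sigma})$. Since $H$ is $\varepsilon\binom{n}{r}$-close to $T_r(n,k)$ and $n$ is assumed large, all hypotheses of Lemma \ref{lem:size-Vi} are satisfied. The conclusion of that lemma is exactly the double inequality
\[
\Big( \frac{1}{k} - \varepsilon^{1/r} \Big) n \leq |V_i| \leq \Big(\frac{1}{k} + \varepsilon^{1/r} \Big) n
\]
for every $i \in [k]$, which is the statement of the corollary.

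There is no real obstacle here: the corollary is a bookkeeping step that packages Lemma \ref{lem:size-Vi} together with the spectral stability theorem so that subsequent arguments in the proof of Theorem \ref{thm:main-1} can assume a near-balanced partition of $V(H)$ without re-deriving it each time. The only thing to be careful about is the implicit dependence of $\varepsilon$ on the stability parameter $\delta$, but since we are free to pick $\varepsilon$ sufficiently small at the start of the proof of Theorem \ref{thm:main-1}, this causes no difficulty.
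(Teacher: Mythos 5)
Your proposal is correct and matches the paper's argument exactly: the paper establishes in the paragraph preceding the corollary that $\lambda^{(p)}(H) > (1-O(n^{-1}))\frac{(k)_r}{k^r}n^{r(1-1/p)}$ via Lemma \ref{lem:lambdaT} and the $H_{k+1}^{(r)}$-freeness of $T_r(n,k)$, invokes Theorem \ref{thm:spectral-stability-expanded-clique} to get that $H$ is $\varepsilon\binom{n}{r}$-close to $T_r(n,k)$, and then states the corollary as an immediate consequence of Lemma \ref{lem:size-Vi} applied to the $f_H$-maximizing partition $\bm{\sigma}$. Your remark about the dependence of $\delta$ on $\varepsilon$ and choosing $\varepsilon$ small at the outset is exactly the paper's tacit convention.
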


Let $L$ be the set of vertices which contained in at least $\varepsilon^{1/r^2} n$ sparse pairs of $H$. 
Denote by $W$ the set of all sparse pairs in $H$. The size of $W$ and $L$ can be estimated as follows.

\begin{lemma}\label{lem:size-L}
$|W| < \varepsilon^{1/r} n^2/2$ and $|L| < \varepsilon^{\frac{r-1}{r^2}} n$.
\end{lemma}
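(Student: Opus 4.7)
The plan is to prove both bounds by a double-counting argument targeting the set of missing edges $E(T)\setminus E(H)$, where $T$ denotes the complete $k$-partite $r$-graph on the parts $V_1,\ldots,V_k$. A sparse pair will be shown to force many missing edges, and each vertex of $L$ sits in many sparse pairs, so the estimates cascade from $|E(T)\setminus E(H)|$ to $|W|$ to $|L|$.

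The first step will be to show $|E(T)\setminus E(H)|\leq (r+1)\varepsilon\binom{n}{r}$. Since $H$ maximizes the $p$-spectral radius among $H_{k+1}^{(r)}$-free $r$-graphs, $\lambda^{(p)}(H)\geq\lambda^{(p)}(T_r(n,k))$, so Theorem \ref{thm:spectral-stability-expanded-clique} (together with Lemma \ref{lem:lambdaT}) makes $H$ $\varepsilon\binom{n}{r}$-close to $T_r(n,k)$, and in particular $e(H)\geq t_r(n,k)-\varepsilon\binom{n}{r}$. Repeating the short computation from the proof of Lemma \ref{lem:size-Vi} — which uses only closeness and the maximality of $f_H(\bm\sigma)$ — yields $e(H\setminus T)\leq r\varepsilon\binom{n}{r}$. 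Since $T$ is a $k$-partite $r$-graph of order $n$, we also have $e(T)\leq t_r(n,k)$. Combining these three inequalities gives $|E(T)\setminus E(H)|=e(T)-e(H)+e(H\setminus T)\leq (r+1)\varepsilon\binom{n}{r}$.

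Next, I will count missing edges through each sparse pair. For $\{u,v\}\in W$ with $u\in V_i$, $v\in V_j$, $i\neq j$, the codegree of $\{u,v\}$ in $T$ equals $\sum_{S\in\binom{[k]\setminus\{i,j\}}{r-2}}\prod_{l\in S}|V_l|$, which by Corollary \ref{coro:equal-size-Vi} is at least $c_{k,r}\,n^{r-2}$ for some positive constant $c_{k,r}$ once $\varepsilon$ is small. By the definition of sparse, the codegree of $\{u,v\}$ in $H$ is at most $d=O(n^{r-3})$, so for large $n$ the pair $\{u,v\}$ lies in at least $\tfrac12 c_{k,r}n^{r-2}$ edges of $E(T)\setminus E(H)$. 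Every edge contains only $\binom{r}{2}$ pairs, so double counting yields $|W|\cdot\tfrac12 c_{k,r}n^{r-2}\leq\binom{r}{2}(r+1)\varepsilon\binom{n}{r}$, giving $|W|=O(\varepsilon n^2)$, which is below $\tfrac12\varepsilon^{1/r}n^2$ for $\varepsilon$ small enough since $\varepsilon\ll\varepsilon^{1/r}$.

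The bound on $|L|$ follows immediately by counting vertex--pair incidences: every vertex of $L$ lies in at least $\varepsilon^{1/r^2}n$ sparse pairs, so $|L|\cdot\varepsilon^{1/r^2}n\leq 2|W|<\varepsilon^{1/r}n^2$, i.e.\ $|L|<\varepsilon^{1/r-1/r^2}n=\varepsilon^{(r-1)/r^2}n$. The only step with real content is bounding $|E(T)\setminus E(H)|$, where one must leverage both the closeness of $H$ to the fixed graph $T_r(n,k)$ and the maximality of $f_H(\bm\sigma)$ in order to conclude that the ambient $k$-partite graph $T$ is itself $O(\varepsilon n^r)$-close to $H$; the remaining steps are bookkeeping.
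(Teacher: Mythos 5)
Your proposal is correct and follows essentially the same route as the paper: bound $e(T\setminus H)$ by $(r+1)\varepsilon\binom{n}{r}$ using the closeness of $H$ to $T_r(n,k)$ together with $e(H\setminus T)\leq r\varepsilon\binom{n}{r}$ from the maximality of $f_H(\bm\sigma)$, then double count sparse pairs against missing edges (each sparse pair lying in $\Omega(n^{r-2})$ missing edges, each missing edge containing at most $\binom{r}{2}$ pairs), and finally count vertex--pair incidences to pass from $|W|$ to $|L|$. The only cosmetic difference is that you absorb the explicit constant $\binom{k-2}{r-2}(1/k-\varepsilon^{1/r})^{r-2}$ into a generic $c_{k,r}$ and conclude via $\varepsilon\ll\varepsilon^{1/r}$, which matches the paper's standing assumption that $\varepsilon$ is sufficiently small.
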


\begin{proof}
Let $T$ be the complete $k$-partite $r$-graph on $n$ vertices with vertex partition $\bm{\sigma}$. 
We first give an upper bound on $|W|$. To this end, note that each missing edge contains at most 
$\binom{r}{2}$ sparse pairs, and each sparse pair belongs to at least 
$\binom{k-2}{r-2} (1/k - \varepsilon^{1/r})^{r-2} n^{r-2}-d$ missing edges from Corollary \ref{coro:equal-size-Vi}, 
we have 
\[ 
|W|\left[\binom{k-2}{r-2} \Big(\frac{1}{k} - \varepsilon^{1/r}\Big)^{r-2} n^{r-2} - d\right]
\leq \binom{r}{2} e(T\setminus H).
\]
Recall that $d = \big(\binom{k+1}{2}(r-2)+k+1\big) \binom{n}{r-3}$ and by Bernoulli inequality, we see
\begin{equation}\label{eq:sec4-temporary}
|W| \left[\frac{1}{k^{r-2}} \binom{k-2}{r-2} (1 - kr\varepsilon^{1/r}) n^{r-2}\right] < \binom{r}{2} e(T\setminus H).
\end{equation}
Since $H$ is $\varepsilon\binom{n}{r}$-close to $T_r(n,k)$, $e(H) \geq t_r(n,k) - \varepsilon\binom{n}{r}$. 
It follows that $e(T) \leq e(H) + \varepsilon\binom{n}{r}$. As a consequence, 
\begin{align}\label{eq:size-T-H}
e(T\setminus H) & = e(T) - e(T\cap H) \nonumber \\
& = e(T) - (e(H) - e(H\setminus T)) \nonumber \\
& = (e(T) - e(H)) + e(H\setminus T) \nonumber \\
& \leq (r+1) \varepsilon\binom{n}{r},
\end{align}
where the last inequality follows from the fact that $e(H\setminus T) \leq r\varepsilon\binom{n}{r}$,
derived using the same arguments as \eqref{eq:size-G-T}. In light of \eqref{eq:sec4-temporary} 
and \eqref{eq:size-T-H}, we see
\begin{equation}\label{eq:size-W}
|W| < \frac{(r + 1) k^{r-2} \binom{r}{2} \binom{n}{r} \varepsilon}{\binom{k-2}{r-2} (1 - kr\varepsilon^{1/r}) n^{r-2}} 
< \frac{1}{2} \varepsilon^{1/r} n^2.
\end{equation}

Finally, recall that $L$ is the set of vertices which contained in at least $\varepsilon^{1/r^2} n$ 
sparse pairs of $H$. Hence, $|L| \varepsilon^{1/r^2} n \leq 2|W| < \varepsilon^{1/r} n^2$, which 
yields that $|L| < \varepsilon^{\frac{r-1}{r^2}} n$.
\end{proof}
 
\begin{lemma}\label{lem:independent-set}
For each $i\in[k]$, $V_i\setminus L$ is a strong independent set of $H$.
\end{lemma}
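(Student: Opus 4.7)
The plan is to prove the contrapositive: assume for contradiction that some $u, v \in V_i \setminus L$ lie in a common edge $e \in E(H)$, and construct a copy of $H_{k+1}^{(r)}$ in $H$. The embedded core will be $C = \{u, v, w_1, \ldots, w_{k-1}\}$ with $w_j \in V_j \setminus L$ for each $j \in [k] \setminus \{i\}$, and for each of the $\binom{k+1}{2}$ pairs of $C$ I will select an edge of $H$ covering it, so that the collection of $r-2$ auxiliary vertices (one $(r-2)$-set per pair) is pairwise disjoint and disjoint from $C$.

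First I would pick the $w_j$'s one at a time, requiring $w_j \in V_j \setminus L$, $w_j \notin e$, and every cross-part pair among the core vertices chosen so far to be dense. Because each vertex outside $L$ belongs to at most $\varepsilon^{1/r^2} n$ sparse pairs, after fixing the first $s$ of the $w_j$'s the number of forbidden candidates in $V_{s+1}$ is at most $|L| + r + (s+2)\,\varepsilon^{1/r^2} n$. Combined with $|V_{s+1}| \geq (1/k - \varepsilon^{1/r})\,n$ from Corollary \ref{coro:equal-size-Vi} and $|L| < \varepsilon^{(r-1)/r^2}\, n$ from Lemma \ref{lem:size-L}, this forbidden count is dwarfed by $|V_{s+1}|$ once $\varepsilon$ is small and $n$ is large, so a valid choice always exists.

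Next I would greedily extend each pair of $C$ to an edge of $H$. I would first dispatch $\{u,v\}$ by assigning it the edge $e$, so that $e \setminus \{u, v\}$ becomes its auxiliary $(r-2)$-set (disjoint from the remaining $w_j$'s thanks to the condition $w_j \notin e$). Then I process the remaining $\binom{k+1}{2}-1$ pairs, all of which are dense by construction: at each stage let $A$ denote the set of already-used vertices (the core $C$ together with all previously allocated auxiliary vertices), so $|A| \leq (k+1) + (r-2)\binom{k+1}{2}$. The number of edges containing the current pair $\{x, y\}$ whose remaining $r-2$ vertices meet $A$ is at most $|A|\binom{n-2}{r-3} \leq d$ by the choice of $d$ in \eqref{eq:constant-d}; since the codegree of $\{x, y\}$ exceeds $d$, a valid edge exists, and we add its auxiliary vertices to $A$.

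The resulting hypergraph is a copy of $H_{k+1}^{(r)}$ embedded in $H$, contradicting the hypothesis. The principal technical difficulty is the quantitative bookkeeping of the greedy extension: the threshold $d$ in \eqref{eq:constant-d} is tuned precisely so that the $\binom{k+1}{2}$-fold greedy selection never exhausts the codegree budget, reflecting a careful balance among $d$, the total auxiliary demand $(r-2)\binom{k+1}{2}$, and the core size $k+1$.
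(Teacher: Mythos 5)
Your proposal is correct and follows essentially the same route as the paper: assume two vertices of $V_i\setminus L$ share an edge, greedily build the remaining $k-1$ core vertices outside $L$ so that all new pairs are dense (your union-bound count of forbidden candidates is equivalent to the paper's use of Lemma \ref{lem:intersection-sets}), and conclude that the core extends to a copy of $H_{k+1}^{(r)}$. The only difference is that you spell out the final greedy expansion (processing $\{u,v\}$ first via the bad edge and using the choice of $d$ to avoid the used vertex set), which the paper leaves implicit by referring back to the argument in the proof of Theorem \ref{thm:spectral-stability-expanded-clique}; this extra care is harmless and, if anything, makes the embedding step cleaner.
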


\begin{proof}
Suppose to the contrary that, without loss of generality, there exists a pair $\{u,v\}\subseteq V_1\setminus L$ 
contained in a bad edge of $H$. Let $DV_i(u)$ denote the set of vertices in $V_i$ adjacent to $u$ by dense pairs.
By the choice of $L$, $|DV_i(u)|\geq |V_i| - \varepsilon^{1/r^2} n$ and $|DV_i(v)|\geq |V_i| - \varepsilon^{1/r^2} n$. 
It follows from Lemma \ref{lem:intersection-sets} that 
\[
|\big( DV_2(u)\cap DV_2(v)\big) \setminus L| 
\geq 2\big( |V_2| - \varepsilon^{1/r^2} n\big) - |V_2| - \varepsilon^{\frac{r-1}{r^2}} n > 1.
\]
Thus we can find a vertex $w_2\in V_2\setminus L$ such that $\{w_2,u\}$ and $\{w_2,v\}$ are both dense 
pairs. For an integer $s$ with $2 \leq s \leq k-1$, suppose that there are vertices 
$\{w_2,\ldots,w_s\}\subseteq V(H)\setminus L$ such that 
$\{w_i,u\}$, $\{w_i,v\}$ ($2\leq i\leq s$) and $\{w_i,w_j\}$ ($2\leq i<j\leq s$) are both dense pairs. 
We next show the existence of a vertex in $V_{s+1}\setminus L$ that is adjacent to both the above $s+1$ vertices 
by dense pairs. It follows from Lemma \ref{lem:intersection-sets} that
\begin{align*}
& ~ |\big( DV_{s+1}(u)\cap DV_{s+1}(v)\cap DV_{s+1}(w_2)\cap\cdots \cap DV_{s+1}(w_s)\big)\setminus L| \\
\geq & ~ (s+1)\big( |V_{s+1}| - \varepsilon^{1/r^2} n\big) - s|V_{s+1}| - \varepsilon^{\frac{r-1}{r^2}} n \\
> & ~ 1.
\end{align*}
Therefore, there is a vertex $w_{s+1}\in V_{s+1}\setminus L$ such that $\{w_{s+1},u\}$, $\{w_{s+1},v\}$ 
and $\{w_i,w_{s+1}\}$ ($2\leq i\leq s$) are both dense pairs.

By the choice of $\{u,v,w_2,\ldots,w_k\}$, these vertices form a core of $H_{k+1}^{(r)}$ in $H$, which 
implies that $H_{k+1}^{(r)}\subseteq H$, a contradiction completing the proof
of Lemma \ref{lem:independent-set}.
\end{proof}

To finish the proof of Theorem \ref{thm:main-1}, it is enough to show $L = \emptyset$.
For each $u\in V(H)$, let $L(u)$ denote the set of edges containing $u$ and intersecting $L\setminus\{u\}$ non-empty. 

\begin{lemma}\label{lem:upper-bound-L(u)}
For each $u\in V(H)$, $|L(u)| < \varepsilon^{3/(2r^2)} n^{r-1}$.    
\end{lemma}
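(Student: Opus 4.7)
The plan is to bound $|L(u)|$ by a straightforward union bound over the vertices of $L\setminus\{u\}$, relying on the size estimate $|L|<\varepsilon^{(r-1)/r^2} n$ from Lemma \ref{lem:size-L}. Every edge counted by $|L(u)|$ contains $u$ together with at least one vertex of $L\setminus\{u\}$, so double-counting the pairs $(e,v)$ with $v\in e\cap(L\setminus\{u\})$ gives
\[
|L(u)| \;\leq\; \sum_{v\in L\setminus\{u\}} d_H(u,v),
\]
where $d_H(u,v)$ denotes the codegree of $u$ and $v$ in $H$.

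I would then apply the trivial codegree estimate $d_H(u,v)\leq \binom{n-2}{r-2}< n^{r-2}/(r-2)!$ together with Lemma \ref{lem:size-L} to obtain
\[
|L(u)| \;<\; \varepsilon^{(r-1)/r^2}\, n \cdot \frac{n^{r-2}}{(r-2)!} \;=\; \frac{\varepsilon^{(r-1)/r^2}}{(r-2)!}\, n^{r-1}.
\]
Since $r\geq 3$ throughout this section, the exponents satisfy $\tfrac{r-1}{r^2}-\tfrac{3}{2r^2}=\tfrac{2r-5}{2r^2}>0$, so for $\varepsilon<1$ one has $\varepsilon^{(2r-5)/(2r^2)}/(r-2)!<1$, and therefore $\varepsilon^{(r-1)/r^2}/(r-2)!<\varepsilon^{3/(2r^2)}$, which yields the desired bound $|L(u)|<\varepsilon^{3/(2r^2)}\, n^{r-1}$.

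There is essentially no real obstacle here: once Lemma \ref{lem:size-L} is in hand, the trivial codegree bound already suffices because $(r-1)/r^2$ beats $3/(2r^2)$ as soon as $r\geq 3$. The looser exponent $3/(2r^2)$ in the statement is presumably chosen with slack so that the later steps (bounding the degrees of vertices in $L$ and the modified Zykov symmetrization used to force $L=\emptyset$) can absorb multiplicative constants without losing the exponent. The dependence on $r\geq 3$ shows up precisely as the sign of $2r-5$, consistent with the paper's separate treatment of $r=2$ via the classical spectral Tur\'an theorem of Kang and Nikiforov.
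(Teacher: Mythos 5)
Your proof is correct and follows essentially the same route as the paper: both arguments bound $|L(u)|$ by the number of $r$-sets containing $u$ and a vertex of $L$, invoke $|L|<\varepsilon^{(r-1)/r^2}n$ from Lemma \ref{lem:size-L}, and win because the exponent $(r-1)/r^2$ exceeds $3/(2r^2)$ when $r\geq 3$. The only difference is cosmetic: the paper stratifies edges by $|e\cap L|$ and uses the expansion $(1+|L|/n)^{r-1}-1\leq 2(r-1)\varepsilon^{(r-1)/r^2}$, whereas your single union bound over $v\in L\setminus\{u\}$ with the trivial codegree estimate is a slightly leaner way to reach the same $O(|L|\,n^{r-2})$ bound.
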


\begin{proof}
Let $u$ be any vertex of $H$. By the definition of $L(u)$, we see 
\begin{align*}
|L(u)| & \leq \sum_{i=1}^{r-1} \binom{|L|}{i} \binom{n-1-i}{r-1-i} \\
& < \sum_{i=1}^{r-1} \frac{|L|^i \cdot n^{r-1-i}}{i! (r-1-i)!} \\
& = \frac{n^{r-1}}{(r-1)!} \sum_{i=1}^{r-1} \Big(\frac{|L|}{n}\Big)^i \binom{r-1}{i} \\
& = \frac{n^{r-1}}{(r-1)!} \Big[\Big(1 + \frac{|L|}{n}\Big)^{r-1} - 1\Big].
\end{align*}
In view of $|L| < \varepsilon^{\frac{r-1}{r^2}} n$ by Lemma \ref{lem:size-L} we have 
\[
\Big(1 + \frac{|L|}{n}\Big)^{r-1} - 1 < \Big(1 + \varepsilon^{\frac{r-1}{r^2}}\Big)^{r-1} - 1
< 2(r-1) \varepsilon^{\frac{r-1}{r^2}}.
\]
Combining these inequalities above, we obtain
$|L(u)| < \varepsilon^{3/(2r^2)} n^{r-1}$, completing the proof of Lemma \ref{lem:upper-bound-L(u)}.
\end{proof}

In what follows, we set $\ell := \varepsilon^{3/(2r^2)} n^{r-1}$ for short.

\begin{lemma}\label{lem:degree-vertex-in-L}
For each $u\in L$, $d_H(u) < \binom{k-1}{r-1}\left(\frac{n}{k}\right)^{r-1} - 2\ell$.
\end{lemma}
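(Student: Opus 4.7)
The plan is to exploit the sparsity of $u \in L$: by definition $|S(u)| \geq \varepsilon^{1/r^2}n$, where $S(u)$ denotes $u$'s sparse partners, and every $v \in S(u)$ satisfies $d_{uv}^H \leq d = O(n^{r-3})$. Assume $u \in V_1$ without loss of generality and split
\[
d_H(u) = |\{e \ni u : e \cap S(u) \neq \emptyset\}| + |\{e \ni u : e \cap S(u) = \emptyset\}|.
\]
The first term is at most $|S(u)| \cdot d = O(n^{r-2})$, negligible compared to $\ell = \varepsilon^{3/(2r^2)} n^{r-1}$. For the second term, further split by whether $e \setminus \{u\}$ intersects $L$: edges meeting $L \setminus \{u\}$ contribute at most $\ell$ by Lemma \ref{lem:upper-bound-L(u)}. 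The remaining edges have $e \setminus \{u\} \subseteq V \setminus L = A_1 \sqcup \cdots \sqcup A_k$, and by Lemma \ref{lem:independent-set} form transversals picking at most one vertex from each $A_i$. I would separate these into \emph{$T$-respecting} transversals (avoiding $A_1$) and \emph{bad} transversals using $A_1$ (which correspond to $V_1\cap e=\{u,w\}$ with $w\in A_1$).

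For the $T$-respecting count, each $v \in S(u) \cap V_j$ has $T$-codegree with $u$ at least $\binom{k-2}{r-2}(n/k)^{r-2}(1 - o(1))$ (using Corollary \ref{coro:equal-size-Vi}) but $H$-codegree at most $d$, so each contributes that many missing $T$-edges through $u$. Applying Bonferroni over $S(u)$ (the pair-intersection terms are bounded by $\binom{|S(u)|}{2}\binom{k-3}{r-3}(n/k)^{r-3} = O(\varepsilon^{2/r^2} n^{r-1})$, which is dominated by the main first-order term) yields at least $c\,\varepsilon^{1/r^2}n^{r-1}$ missing $T$-edges through $u$. Thus the $T$-respecting count is bounded by $d_T(u) - c\,\varepsilon^{1/r^2}n^{r-1} \leq \binom{k-1}{r-1}(n/k)^{r-1}(1 + O(\varepsilon^{1/r})) - c\,\varepsilon^{1/r^2}n^{r-1}$. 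For the bad transversals using $A_1$, I would invoke the optimality of the partition $\bm{\sigma}$: moving $u$ from $V_1$ to any $V_l$ ($l\neq 1$) cannot increase $f_H(\bm{\sigma})$, which after summing over $l \neq 1$ gives $k\,B_1'(u) \leq (r-1)d_H(u)$, where $B_1'(u)$ counts edges through $u$ with $|e \cap V_1| \geq 2$. Strong independence of $A_1$ (so $|e \cap A_1| \leq 1$) then couples these back into the overall estimate.

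Piecing everything together gives
\[
d_H(u) \leq \binom{k-1}{r-1}(n/k)^{r-1} + O(\varepsilon^{1/r} n^{r-1}) - c\,\varepsilon^{1/r^2} n^{r-1} + O(\ell).
\]
For $r \geq 2$ we have $1/r^2 < 3/(2r^2) < 1/r$, so $\varepsilon^{1/r^2} > \varepsilon^{3/(2r^2)} > \varepsilon^{1/r}$ for small $\varepsilon$. Hence the decrement $c\,\varepsilon^{1/r^2}n^{r-1}$ swallows $2\ell + O(\ell + \varepsilon^{1/r} n^{r-1})$, delivering $d_H(u) < \binom{k-1}{r-1}(n/k)^{r-1} - 2\ell$.

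The main obstacle is tethering the bad transversals using $A_1$ to the $T$-respecting count, since the raw $\bm{\sigma}$-optimality bound $B_1'(u) \leq \frac{r-1}{k}d_H(u)$ on its own would introduce a blow-up of $\binom{k}{r-1}/\binom{k-1}{r-1} = k/(k-r+1)$ in front of the main term, which is not absorbable when $k$ is close to $r$. The gain $c\,\varepsilon^{1/r^2}n^{r-1}$ from the missing $T$-edges must therefore be threaded through the coupling before this blow-up takes effect, so that the sparse-partner decrement compensates for the bad transversals rather than being multiplied by the blow-up factor.
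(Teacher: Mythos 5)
Your ``$T$-respecting'' estimate is sound and is essentially the paper's own argument in the case where no edge through $u$ meets $V_1\setminus L$: restrict to a set of $\lceil\varepsilon^{1/r^2}n\rceil$ sparse partners of $u$ (all of which lie outside $V_1$, since sparse pairs are only defined across parts), count the missing $T$-edges through $u$ they force (the paper divides by $r-1$ instead of running Bonferroni, but the effect is the same), and absorb the $\ell$-size and $O(\varepsilon^{1/r}n^{r-1})$-size errors using $\varepsilon^{1/r}\ll\varepsilon^{3/(2r^2)}\ll\varepsilon^{1/r^2}$. The genuine gap is exactly the point you flag at the end and do not close: the edges through $u$ containing a vertex $w\in V_1\setminus L$. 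For these your only tool is the optimality of $\bm{\sigma}$, which gives $B_1'(u)\leq\frac{r-1}{k}d_H(u)$; that allows $\Theta(n^{r-1})$ such edges, and feeding it back into the inequality inflates the main term by the factor $k/(k-r+1)$, which is not absorbable by the $c\,\varepsilon^{1/r^2}n^{r-1}$ decrement when $k$ is close to $r$. Saying the gain ``must be threaded through the coupling before the blow-up takes effect'' is a hope, not an argument, and no purely counting version of it is available: nothing in closeness to $T_r(n,k)$, strong independence of the $V_i\setminus L$, or partition optimality alone prevents a vertex of $L$ from lying in very many bad edges that use $V_1\setminus L$.

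The paper closes this case with an idea absent from your proposal: it uses $H_{k+1}^{(r)}$-freeness again. If even one bad edge contains $u$ and some $v\in V_1\setminus L$, then some part $V_j$ must satisfy $|DV_j(u)|\leq k\varepsilon^{1/r^2}n$ --- otherwise one greedily picks common dense neighbours $w_2,\ldots,w_k$ outside $L$ and the set $\{u,v,w_2,\ldots,w_k\}$ is the core of a copy of $H_{k+1}^{(r)}$. Once a whole part is almost entirely sparse with respect to $u$, three things happen: the edges through $u$ meeting $V_j$ number only $t=O(\varepsilon^{1/r^2}n^{r-1})$; the local swap of $u$ from $V_1$ to $V_j$ (not a sum over all $l\neq 1$) bounds the number of edges through $u$ meeting $V_1\setminus\{u\}$ by $2t$, precisely because the competing loss term is the small quantity $t$; and the remaining edges avoid both $V_1$ and $V_j$, so they are at most $\binom{k-2}{r-1}\big(\frac{1}{k}+\varepsilon^{1/r}\big)^{r-1}n^{r-1}$, a saving of order $\binom{k-2}{r-2}(n/k)^{r-1}$, i.e.\ a constant proportion of the main term, which comfortably swallows $3t+\ell+2\ell$. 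Without this forbidden-subgraph step (or some substitute for it), your argument does not go through in the case where such bad edges exist, so as written the proof is incomplete.
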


\begin{proof}
For any $u\in L$, we assume, without loss of generality, that $u\in V_1$, i.e., 
$u\in V_1\cap L$. We proceed by considering the following two cases.

{\bfseries Case 1.} There is a vertex $v\in V_1\setminus L$ such that $\{u,v\}$ contained 
in a bad edge in $H$. 
    
We first show that there is some $j\in \{2,\ldots,k\}$ such that $|DV_j(u)|\leq k\varepsilon^{1/r^2} n$.
Otherwise, we have $|DV_i(u)| > k\varepsilon^{1/r^2} n$ for each $2\leq i\leq k$. As the same 
discussion in Lemma \ref{lem:independent-set}, we can find $k-1$ vertices $w_2,\ldots,w_k$ such that
each pair in $\{w_2,\ldots,w_k,u,v\}$, except for $\{u,v\}$, forms a dense pair. Consequently,
the $k+1$ vertices forms the core of a copy of $H_{k+1}^{(r)}$ in $H$, a contradiction.
Without loss of generality, we assume that $|DV_2(u)|\leq k\varepsilon^{1/r^2} n$.

To estimate $d_H(u)$, note that $E_u$ is a subset of the union of the following four sets:

(i) $L(u)$ (the set of edges containing $u$ and intersecting $L\setminus\{u\}$ non-empty); 

(ii) The set of edges containing $u$ and intersecting $V_2$ non-empty; 

(iii) The set of edges containing $u$ and intersecting $V_1\setminus\{u\}$ non-empty; 

(iv) The set of edges containing $u$ and intersecting $(V_1\cup V_2\cup L)\setminus\{u\}$ empty. \\
In what follows, we estimate the size of these four sets, respectively. 
Obviously, $|L(u)| < \ell$ by Lemma \ref{lem:upper-bound-L(u)}.
For (ii), by Lemma \ref{lem:independent-set} and Corollary \ref{coro:equal-size-Vi}, the number 
of edges containing $u$ and intersecting $V_2$ non-empty is at most
\begin{align*}
t & := |DV_2(u)| \binom{k-2}{r-2}\left(\frac{1}{k} + \varepsilon^{1/r}\right)^{r-2} n^{r-2}
 + |DV_2(u)| \sum_{i=1}^{r-2} \binom{|L|}{i} \binom{n - 2 - i}{r - 2 - i} \\
& ~~~~ + \Big[ \Big(\frac{1}{k} + \varepsilon^{1/r}\Big) n - |DV_2(u)| \Big] d,
\end{align*}
where the first term gives an upper bound on the number of edges in $E_u\cap E(T)$ that contain  
a vertex in $DV_2(u)$, while intersecting $L\setminus (V_1\cup V_2)$ empty; the second term provides  
an upper bound on the number of edges in $E_u$ that contain a vertex in $DV_2(u)$ and intersect
$L\setminus (V_1\cup V_2)$ non-empty; the third term offers an upper bound on the number of edges 
in $E_u$ that contain a vertex $w$ in $V_2\setminus DV_2(u)$ such that $\{u,w\}$ is a sparse pair.
Since $|DV_2(u)| \leq k\varepsilon^{1/r^2} n$, we have 
\begin{align*}
t & \leq k\varepsilon^{1/r^2} n \bigg[\binom{k-2}{r-2}\left(\frac{1}{k} + \varepsilon^{1/r}\right)^{r-2}n^{r-2}
 + \sum_{i=1}^{r-2} \binom{|L|}{i} \binom{n - 2 - i}{r - 2 - i}\bigg] \\
& ~~~~ + \Big[ \Big(\frac{1}{k} + \varepsilon^{1/r}\Big) n - k\varepsilon^{1/r^2} n \Big] d.
\end{align*}
In order to estimate $t$, note that 
\begin{align*}
\sum_{i=1}^{r-2} \binom{|L|}{i} \binom{n - 2 - i}{r - 2 - i}
& < \sum_{i=1}^{r-2} \frac{|L|^i \cdot n^{r-2-i}}{i!\cdot (r-2-i)!} \\
& = \frac{n^{r-2}}{(r-2)!} \sum_{i=1}^{r-2} \Big( \frac{|L|}{n} \Big)^i \binom{r-2}{i} \\
& = \frac{n^{r-2}}{(r-2)!} \bigg[\Big(1 + \frac{|L|}{n}\Big)^{r-2} - 1\bigg] \\
& < \frac{2\varepsilon^{(r-1)/r^2}}{(r-3)!} \cdot n^{r-2},
\end{align*}
where the last inequality due to Lemma \ref{lem:size-L}. Recall that 
$d = \big(k + 1 + (r-2) \binom{k+1}{2}\big) \binom{n}{r-3}$
by \eqref{eq:constant-d}. We have 
\[
\Big[ \Big(\frac{1}{k} + \varepsilon^{1/r}\Big) n - k\varepsilon^{1/r^2} n \Big] d = O(n^{r-2}).
\]
Combining these inequalities above together, we obtain 
\begin{align*}
t & < k\varepsilon^{1/r^2} n \bigg[\binom{k-2}{r-2}\left(\frac{1}{k} + \varepsilon^{1/r}\right)^{r-2}n^{r-2}
 + \frac{2\varepsilon^{(r-1)/r^2}}{(r-3)!} \cdot n^{r-2}\bigg] + O(n^{r-2}) \\
& < \frac{2\varepsilon^{1/r^2}}{k^{r-3}} \binom{k-2}{r-2} n^{r-1}.
\end{align*}
For (iii), we show that the number of edges containing $u$ and intersecting $V_1\setminus\{u\}$ 
non-empty is at most $2t$. Assume by contradiction that there exist at least $2t+1$ edges 
containing $u$ and intersecting $V_1\setminus\{u\}$ non-empty in $H$. Let $V'_1=V_1\setminus \{u\}$, $V'_2=V_2\cup \{u\}$ 
and $\bm{\sigma}' = V'_1\cup V'_2 \cup V_3\cup \cdots\cup V_k$ be a partition of $V(H)$. Then each edge 
containing $u$ and intersecting $V_2$ may decrease its contribution to $f_H(\bm{\sigma}')$ by $1$, 
thus the total reduced value of $f_H(\bm{\sigma}')$ corresponding to the new partition is at most $t$.
On the other hand, each edge containing $u$, intersecting $V_1\setminus\{u\}$ and disjoint from $V_2$ may 
increase its contribution to $f_H(\bm{\sigma}')$ by $1$, thus the total increased value of $f_H(\bm{\sigma}')$ 
corresponding to the new partition is at least $2t+1-t=t+1$. Hence, $f_H(\bm{\sigma}') > f_H(\bm{\sigma})$, 
this is a contradiction to the choice of the partition $V_1\cup V_2 \cup \cdots\cup V_k$.

So, in light of Corollary \ref{coro:equal-size-Vi} and Lemma \ref{lem:independent-set}, we deduce that
\begin{align*}
d_H(u) 
& \leq \binom{k-2}{r-1} 
\Big(\frac{1}{k} + \varepsilon^{1/r}\Big)^{r-1} n^{r-1} 
+ 3t + \ell \\
& < \binom{k-2}{r-1}\left(\frac{n}{k}\right)^{r-1} (1 + 2(r-1) k\varepsilon^{1/r}) 
+ \frac{6\varepsilon^{1/r^2}}{k^{r-3}} \binom{k-2}{r-2} n^{r-1} + \ell \\
& < \binom{k-1}{r-1} \left(\frac{n}{k}\right)^{r-1} - 2\ell.
\end{align*}

{\bfseries Case 2.} For any vertex $v\in V_1\setminus L$, $\{u,v\}$ is not covered by a bad edge. 
    
In this case, by inclusion-exclusion principle, the number of edges in $H$ containing $u$ is at most
\begin{equation}\label{eq:upper-bound-d(u)-L}
\binom{k-1}{r-1} \left(\frac{1}{k} + \varepsilon^{1/r}\right)^{r-1} n^{r-1} + \ell
- \frac{\varepsilon^{1/r^2} n}{r-1} \bigg[\binom{k-2}{r-2} \left(\frac{1}{k} 
- \varepsilon^{1/r}\right)^{r-2} n^{r-2} - d\bigg],
\end{equation}
where the first term gives an upper bound on the number of edges in $E_u\cap E(T)$ that intersect 
$L\setminus \{u\}$ empty; the second term provides an upper bound on the number of edges in $E_u$ 
that intersect $L\setminus \{u\}$ non-empty. It follows from \eqref{eq:upper-bound-d(u)-L} that 
\begin{align*}
d(u) & < \binom{k-1}{r-1} \left(1+ k\varepsilon^{1/r}\right)^{r-1} \Big(\frac{n}{k}\Big)^{r-1} %
- \varepsilon^{1/r^2} \binom{k-1}{r-1} \frac{(1-k\varepsilon^{1/r})^{r-2}}{(k-1)k^{r-2}} n^{r-1} + \ell + O(n^{r-2}) \\
& < \Big[(1 + k\varepsilon^{1/r})^{r-1} - \varepsilon^{1/r^2} (1 - k\varepsilon^{1/r})^{r-2}\Big] \binom{k-1}{r-1} %
\Big(\frac{n}{k}\Big)^{r-1} + \ell + O(n^{r-2}) \\
& < \binom{k-1}{r-1} \left(\frac{n}{k}\right)^{r-1} - 2\ell.
\end{align*}
The last inequality holds because $\varepsilon^{1/r} \ll \varepsilon^{3/(2r^2)} \ll \varepsilon^{1/r^2}$ 
and $\ell = \varepsilon^{3/(2r^2)} n^{r-1}$.
\end{proof}

\begin{lemma}\label{lem:x>0-L}
If $L\neq \emptyset$, then there exists a vertex $u\in L$ such that $x_u > 0$. 
\end{lemma}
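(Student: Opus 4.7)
The plan is to argue by contradiction, comparing $H$ to $T_r(n,k)$ via the subgraph $H - L$ and invoking Theorem \ref{thm:main-1} inductively on $n$. Assume, for contradiction, that $x_u = 0$ for every $u \in L$. Then every edge of $H$ meeting $L$ contributes nothing to $P_H(\bm{x})$, and the restriction $\bm{x}|_{V(H)\setminus L}$ remains a unit $\ell_p$-vector. Hence $\lambda^{(p)}(H) = P_H(\bm{x}) = P_{H-L}(\bm{x}|_{V(H)\setminus L}) \leq \lambda^{(p)}(H-L)$. The reverse inequality follows by regarding $H-L$ together with $|L|$ isolated vertices as a subgraph of $H$ on the same vertex set and using monotonicity of $\lambda^{(p)}$ under edge addition. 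Hence $\lambda^{(p)}(H-L) = \lambda^{(p)}(H)$.

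Next I would apply Theorem \ref{thm:main-1} to the smaller $H_{k+1}^{(r)}$-free $r$-graph $H-L$. By Lemma \ref{lem:size-L}, $|L| < \varepsilon^{(r-1)/r^2} n = o(n)$, so that $n-|L| > n_0$ provided the theorem's threshold $n_0$ is chosen large enough (which we can do since $\varepsilon$ is taken small). The induction hypothesis yields $\lambda^{(p)}(H-L) \leq \lambda^{(p)}(T_r(n-|L|, k))$. Embedding $T_r(n-|L|, k)$ into $T_r(n, k)$ as an induced subgraph on a balanced subset of $n-|L|$ vertices, and reinstating $|L|$ isolated vertices, realizes $T_r(n-|L|, k)$ as a proper spanning subgraph of $T_r(n, k)$. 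Since $T_r(n, k)$ is complete $k$-partite, Lemma \ref{lem:complete-k-partite-x-positive} guarantees every nonnegative eigenvector of $T_r(n, k)$ is positive, and Lemma \ref{lem:subgraph-spectral-radius} then gives $\lambda^{(p)}(T_r(n-|L|, k)) < \lambda^{(p)}(T_r(n, k))$.

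Finally, since $H$ attains the maximum $p$-spectral radius among $n$-vertex $H_{k+1}^{(r)}$-free $r$-graphs and $T_r(n, k)$ is itself $H_{k+1}^{(r)}$-free, we have $\lambda^{(p)}(H) \geq \lambda^{(p)}(T_r(n, k))$. Chaining the inequalities,
\[
\lambda^{(p)}(T_r(n, k)) \leq \lambda^{(p)}(H) = \lambda^{(p)}(H-L) \leq \lambda^{(p)}(T_r(n-|L|, k)) < \lambda^{(p)}(T_r(n, k)),
\]
a contradiction. Hence some $u \in L$ must satisfy $x_u > 0$. The main obstacle is setting up the induction cleanly: one must choose the threshold $n_0$ large enough that $n > n_0$ forces $n-|L| > n_0$, so that the inductive hypothesis is available for $H-L$. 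Lemma \ref{lem:size-L} renders this feasible because $|L|$ is sublinear in $n$.
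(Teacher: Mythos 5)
Your first and last steps coincide with the paper's proof: if $\bm{x}$ vanishes on $L$ then $\lambda^{(p)}(H)\leq\lambda^{(p)}(H\setminus L)$, and $\lambda^{(p)}(T_r(n-|L|,k))<\lambda^{(p)}(T_r(n,k))$ follows from Lemma \ref{lem:complete-k-partite-x-positive} and Lemma \ref{lem:subgraph-spectral-radius}, contradicting $\lambda^{(p)}(H)\geq\lambda^{(p)}(T_r(n,k))$. The gap is in the middle step, where you bound $\lambda^{(p)}(H\setminus L)$ by invoking Theorem \ref{thm:main-1} ``inductively'' on the smaller graph. As the argument is structured, this is circular: Lemma \ref{lem:x>0-L} feeds into Lemma \ref{lem:L-emptyset}, which is exactly what the proof of Theorem \ref{thm:main-1} rests on, so you may not assume the theorem for $H\setminus L$ unless the entire proof is recast as an induction on $n$. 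And even then the base case cannot be repaired in the way you suggest: no choice of $n_0$ makes ``$n>n_0$ implies $n-|L|>n_0$'' true, since under the hypothesis of the lemma $|L|\geq 1$ (and in general $|L|$ can be as large as $\varepsilon^{(r-1)/r^2}n$), so already $n=n_0+1$ violates it. Sublinearity of $|L|$ helps only when $n$ is well above $n_0$; for $n$ in the range $(n_0,\,n_0/(1-\varepsilon^{(r-1)/r^2})]$ the inductive hypothesis is simply unavailable, and there is no independent argument supplied for those values of $n$.

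The paper avoids this entirely: by Lemma \ref{lem:independent-set}, each $V_i\setminus L$ is a strong independent set, so $H':=H\setminus L$ is a $k$-partite $r$-graph on $n-|L|$ vertices, and Lemma \ref{lem:k-partite-upper-bound} (which holds for every $k$-partite $r$-graph, with no largeness or extremality assumption) gives $\lambda^{(p)}(H')\leq\lambda^{(p)}(T_r(n-|L|,k))$ directly. Replacing your inductive appeal by this use of Lemma \ref{lem:independent-set} together with Lemma \ref{lem:k-partite-upper-bound} closes the gap and recovers the paper's proof.
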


\begin{proof}
Suppose by contradiction that $x_v = 0$ for each $v\in L$. Let $H' := H\setminus L$. Then 
\[
\lambda^{(p)} (H) = r! \sum_{e\in E(H)} \prod_{u\in e} x_u 
= r! \sum_{e\in E(H')} \prod_{u\in e} x_u \leq \lambda^{(p)} (H').
\]
In view of Lemma \ref{lem:independent-set}, $H'$ is a $k$-partite $r$-graph. 
Combining with Lemma \ref{lem:k-partite-upper-bound}, we see 
\[ 
\lambda^{(p)} (H) \leq \lambda^{(p)} (H') \leq \lambda^{(p)} (T_r(n-|L|, k)) < \lambda^{(p)} (T_r(n, k)),
\]
the last inequality follows from Lemma \ref{lem:complete-k-partite-x-positive} and 
Lemma \ref{lem:subgraph-spectral-radius}. This is a contradiction with 
$\lambda^{(p)} (H) \geq \lambda^{(p)} (T_r(n,k))$.
\end{proof}

\begin{lemma}\label{lem:degree-of-z}
Let $z$ be a vertex such that $x_z = \max_{w\in V(H)}\{x_w\}$. Then 
\[
d_H(z) > \binom{k-1}{r-1}\left(\frac{n}{k}\right)^{r-1} - 2\ell.
\]
\end{lemma}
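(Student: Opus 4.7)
The plan is to apply the eigenvalue--eigenvector equation \eqref{eq:eigenequation} at the vertex $z$ and exploit the fact that $x_z$ dominates every other coordinate. Concretely, since $x_z \geq x_w$ for all $w$, I would bound each term in the sum as $x_{i_2} \cdots x_{i_r} \leq x_z^{r-1}$, so the eigen-equation yields
\[
\lambda^{(p)}(H)\, x_z^{p-1} \;=\; (r-1)!\!\!\sum_{\{z,i_2,\ldots,i_r\}\in E(H)}\!\! x_{i_2}\cdots x_{i_r} \;\leq\; (r-1)!\, d_H(z)\, x_z^{r-1},
\]
which rearranges to
\[
d_H(z) \;\geq\; \frac{\lambda^{(p)}(H)}{(r-1)!}\cdot x_z^{\,p-r}.
\]

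Next I would extract a lower bound on $x_z^{p-r}$ from the normalization $\|\bm{x}\|_p=1$. Since $\bm{x}\in\mathbb{S}_{p,+}^{n-1}$ and $x_z$ is the largest coordinate, we have $n\, x_z^p \geq \sum_{w} x_w^p = 1$, i.e., $x_z \geq n^{-1/p}$. Because $p\geq r$, raising to the nonnegative power $p-r$ preserves the inequality (trivially if $p=r$), giving $x_z^{p-r} \geq n^{-(p-r)/p} = n^{-1+r/p}$.

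For the numerator I would combine the extremality of $H$ with the lower bound $\lambda^{(p)}(T_r(n,k)) \geq (1 - O(n^{-1}))\frac{(k)_r}{k^r}\, n^{r(1-1/p)}$ from \autoref{lem:lambdaT}, together with the observation that $T_r(n,k)$ is $H_{k+1}^{(r)}$-free and $H$ was chosen to maximize $\lambda^{(p)}$ among such hypergraphs, so $\lambda^{(p)}(H) \geq \lambda^{(p)}(T_r(n,k))$. Multiplying the two bounds, the exponents of $n$ telescope:
\[
\lambda^{(p)}(H)\cdot x_z^{\,p-r} \;\geq\; \bigl(1-O(n^{-1})\bigr)\frac{(k)_r}{k^r}\, n^{r(1-1/p)}\cdot n^{-1+r/p}
\;=\;\bigl(1-O(n^{-1})\bigr)\frac{(k)_r}{k^r}\, n^{r-1}.
\]
Dividing by $(r-1)!$ and using the identity $\frac{(k)_r}{(r-1)!\,k^r} = \binom{k-1}{r-1}\frac{1}{k^{r-1}}$, I obtain $d_H(z) \geq (1-O(n^{-1}))\binom{k-1}{r-1}(n/k)^{r-1}$.

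Finally, to conclude the stated bound, I would absorb the $O(n^{-1})$ error into $2\ell$. The gap between $(1-O(n^{-1}))\binom{k-1}{r-1}(n/k)^{r-1}$ and $\binom{k-1}{r-1}(n/k)^{r-1}$ is only $O(n^{r-2})$, whereas $2\ell = 2\varepsilon^{3/(2r^2)}n^{r-1}$ is of order $n^{r-1}$, so for $n$ sufficiently large the inequality $d_H(z) > \binom{k-1}{r-1}(n/k)^{r-1} - 2\ell$ holds. No serious obstacle is anticipated: the proof is essentially a one-line consequence of \eqref{eq:eigenequation} combined with the $\ell_p$-normalization bound on the maximum entry; the only minor technicality is verifying that the exponents of $n$ match exactly, which they do precisely because the bound in \autoref{lem:lambdaT} scales like $n^{r(1-1/p)}$.
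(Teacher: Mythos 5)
Your proposal is correct and follows essentially the same route as the paper: the eigenvalue--eigenvector equation \eqref{eq:eigenequation} at $z$, the bound $x_z\geq n^{-1/p}$ from the $\ell_p$-normalization, and the lower bound $\lambda^{(p)}(H)\geq\lambda^{(p)}(T_r(n,k))$ via Lemma \ref{lem:lambdaT}, with the $O(n^{r-2})$ error absorbed into the $2\ell$ term. Your write-up is just slightly more explicit about the exponent bookkeeping and the identity $\frac{(k)_r}{(r-1)!\,k^r}=\binom{k-1}{r-1}k^{-(r-1)}$, which the paper leaves implicit.
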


\begin{proof}
By eigenvalue\,--\,eigenvector equation \eqref{eq:eigenequation} for $\lambda^{(p)} (H)$ with 
respect to $z$, we see
\[ 
\lambda^{(p)} (H)\cdot x_z^{p-1} = (r-1)! \sum_{e\in E_z} \prod_{i\in e\setminus\{z\}} x_i 
\leq (r-1)! \cdot d_H(z) x_z^{r-1}.
\]
Combining with Lemma \ref{lem:lambdaT}, we have 
\begin{equation}\label{eq:d(z)-lower-bound}
d_H(z) \geq \frac{\lambda^{(p)} (H)}{(r - 1)!} \cdot x_z^{p-r}
> \big(1 - O(n^{-1})\big) \cdot \frac{\binom{k-1}{r-1} n^{r(1-1/p)}}{k^{r-1}} \cdot x_z^{p-r}.
\end{equation}
It follows from \eqref{eq:d(z)-lower-bound} and $x_z \geq n^{-1/p}$ that
\begin{align*}
d_H(z) & > \big(1 - O(n^{-1})\big) \cdot \frac{\binom{k-1}{r-1} n^{r-1}}{k^{r-1}} > 
\binom{k-1}{r-1}\left(\frac{n}{k}\right)^{r-1} - 2\ell.
\end{align*}
This completes the proof of Lemma \ref{lem:degree-of-z}.
\end{proof}

\begin{lemma}\label{lem:L-emptyset}
$L = \emptyset$.
\end{lemma}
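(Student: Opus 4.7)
The plan is to argue by contradiction via a modified Zykov symmetrization. Assume $L\neq\emptyset$; by Lemma~\ref{lem:x>0-L} fix $u\in L$ with $x_u>0$, and let $z$ realize $x_z=\max_{w\in V(H)}x_w$ as in Lemma~\ref{lem:degree-of-z}. Lemma~\ref{lem:degree-vertex-in-L} gives $d_H(u)<\binom{k-1}{r-1}(n/k)^{r-1}-2\ell$, while the proof of Lemma~\ref{lem:degree-of-z} in fact yields $d_H(z)\ge(1-O(n^{-1}))\binom{k-1}{r-1}(n/k)^{r-1}$. Since $\ell=\varepsilon^{3/(2r^2)}n^{r-1}\gg n^{r-2}$, these two facts together produce the clean degree gap
\[
d_H(z)-d_H(u)>\ell,
\]
which will be the quantitative engine of the whole argument.

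Next I would construct $H^*$ from $H$ by a link-swap at $u$: delete every edge of $H$ containing $u$, and for each $e\in E_z$ with $u\notin e$ add the edge $(e\setminus\{z\})\cup\{u\}$ (if not already present). In $H^*$ the vertices $u$ and $z$ are non-adjacent and share identical links, and $|V(H^*)|=n$. To verify that $H^*$ is still $H_{k+1}^{(r)}$-free, I would do a case analysis on a hypothetical copy $\mathcal{C}$ of $H_{k+1}^{(r)}$ in $H^*$: if $u\notin V(\mathcal{C})$, then $\mathcal{C}\subseteq H\setminus E_u\subseteq H$, contradicting the $H_{k+1}^{(r)}$-freeness of $H$; if $u\in V(\mathcal{C})$ but $z\notin V(\mathcal{C})$, the link identity lets us substitute $u\mapsto z$ to transport $\mathcal{C}$ to a copy in $H$; and the remaining case $u,z\in V(\mathcal{C})$ is excluded because $u,z$ share no edge of $H^*$, while every pair in the core of $H_{k+1}^{(r)}$ must be covered by an edge and the $(r-2)$-enlargement sets are pairwise disjoint and disjoint from the core.

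The final step is to compare spectral radii. Evaluating $P_{H^*}$ at the eigenvector $\bm{x}$ of $H$, the increment decomposes as
\[
P_{H^*}(\bm{x})-P_H(\bm{x})=r!\,x_u\Biggl(\sum_{\substack{e\in E_z\\u\notin e}}\prod_{v\in e\setminus\{z\}}x_v-\sum_{\substack{e\in E_u\\z\notin e}}\prod_{v\in e\setminus\{u\}}x_v\Biggr).
\]
Applying the eigenequation \eqref{eq:eigenequation} at both $u$ and $z$, the parenthesized quantity equals $(r-1)!^{-1}\lambda^{(p)}(H)\bigl(x_z^{p-1}-x_u^{p-1}\bigr)$ up to an error of order $(x_z-x_u)\cdot\mathrm{codeg}_H(u,z)\cdot x_z^{r-2}$. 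Using the crude eigenequation bound $\lambda^{(p)}(H)x_v^{p-1}\le(r-1)!\,d_H(v)x_z^{r-1}$ in combination with the degree gap displayed above, I would argue that the main term strictly dominates the error, yielding $P_{H^*}(\bm{x})>P_H(\bm{x})$ and therefore $\lambda^{(p)}(H^*)>\lambda^{(p)}(H)$. This contradicts the extremality of $H$ and forces $L=\emptyset$.

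The hard part is twofold. First, verifying $H_{k+1}^{(r)}$-freeness of $H^*$ in the sub-case where $u$ and $z$ take mixed core/enlargement roles in the putative copy, since a naive $u\mapsto z$ substitution can collapse an enlargement vertex into the core; one has to exploit carefully the disjointness structure built into the definition of $H_{k+1}^{(r)}$. Second, the quantitative spectral comparison, where one must show that the degree gap of size $\ell$ is large enough to overwhelm the codegree correction coming from pairs incident to $z$. The remaining work is routine bookkeeping between the main term and the correction.
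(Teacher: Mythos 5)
There is a genuine gap, and it sits exactly where you flag ``the hard part'': the $H_{k+1}^{(r)}$-freeness of your graph $H^*$. You transfer the \emph{entire} link of $z$ to $u$, and your case analysis only disposes of (a) copies avoiding $u$, (b) copies containing $u$ but not $z$, and (c) copies with \emph{both $u$ and $z$ in the core}. In the mixed case ($u$ in the core, $z$ an expansion vertex of some edge $f$ not containing $u$, which survives unchanged in $H^*$), the back-substitution $u\mapsto z$ does not produce a copy of $H_{k+1}^{(r)}$ in $H$: the vertex $z$ now lies in the core while still sitting inside the expansion set of $f$, so what you recover is merely a member of $\mathcal{K}_{k+1}^{(r)}$ (all core pairs covered), and $H$ is only assumed $H_{k+1}^{(r)}$-free, not $\mathcal{K}_{k+1}^{(r)}$-free. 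The disjointness built into $H_{k+1}^{(r)}$ cannot rescue this: e.g.\ for $r=3$, $k=3$, the hypergraph with edges $\{z,b,x_1\},\{z,c,x_2\},\{z,d,x_3\},\{b,c,z\},\{b,d,x_5\},\{c,d,x_6\}$ is $H_4^{(3)}$-free, yet after the full link-swap at $u$ the edges $\{u,b,x_1\},\{u,c,x_2\},\{u,d,x_3\},\{b,c,z\},\{b,d,x_5\},\{c,d,x_6\}$ form a genuine copy of $H_4^{(3)}$ with core $\{u,b,c,d\}$. So the full link-swap simply does not preserve $H_{k+1}^{(r)}$-freeness in general, and no amount of ``careful exploitation of disjointness'' closes this case without extra input about the transferred edges. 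This is precisely why the paper does \emph{not} move all of $E_z$: it moves only $E_z^c=E_z\setminus(SE_z\cup L(z))$, the edges at $z$ containing no sparse pair and no vertex of $L$. Then in any putative copy the core neighbours $w_1,\dots,w_k$ of $u$ avoid $L$ and all relevant pairs are dense, so one can pick a fresh vertex $v\in V_1\setminus(L\cup\{z,u\})$ dense to every $w_i$ and, using codegree at least $d$, greedily rebuild an honest copy of $H_{k+1}^{(r)}$ inside $H$ --- the density/codegree mechanism is the missing idea, not a bookkeeping detail. The loss of at most $|SE_z|+|L(z)|\le\tfrac32\ell$ terms is exactly what the $2\ell$ slack in Lemma~\ref{lem:degree-vertex-in-L} is there to absorb.

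A secondary, fixable issue is the spectral comparison. Writing the gain via the eigenequation at $u$ and $z$, the ``main term'' is $\tfrac{\lambda^{(p)}(H)}{(r-1)!}\bigl(x_z^{p-1}-x_u^{p-1}\bigr)$, which vanishes when $x_u=x_z$, so ``main term dominates the error'' cannot by itself give strict positivity (also note the codegree correction actually has a favourable sign, so it is not the obstacle). The robust route, which is the paper's, is one-sided: bound $\sum_{e\in E_u}\prod_{i\in e\setminus\{u\}}x_i\le d_H(u)x_z^{r-1}$ and lower-bound the transferred sum by $\bigl(\tfrac{\lambda^{(p)}(H)}{(r-1)!}x_z^{p-r}-\tfrac32\ell\bigr)x_z^{r-1}$ using $x_z\ge n^{-1/p}$ and $p\ge r$, so that the $2\ell$ gap from Lemma~\ref{lem:degree-vertex-in-L} yields a strict increase of order $\ell\,x_u x_z^{r-1}$. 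With your set-up this patch is routine; the unpatched part of your argument is the freeness of $H^*$.
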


\begin{proof}
Suppose to the contrary that $L\neq\emptyset$. By Lemma \ref{lem:x>0-L}, we may choose a vertex $u$ 
in $L$ such that $x_u > 0$. Let $z$ be a vertex such that $x_z = \max_{w\in V(H)}\{x_w\}$. We assume 
without loss of generality that $z\in V_1$. Moreover, by Lemma \ref{lem:degree-of-z} and 
Lemma \ref{lem:degree-vertex-in-L} we deduce that $z\notin L$.

Let $SE_z$ be the subset of $E_z$ such that each edge in $SE_z$ contains at least one sparse pair. Then
\begin{equation}\label{eq:size-SE-z}
|SE_z|\leq |W|\cdot d 
\leq \frac{\varepsilon^{1/r}}{2} n^2 \left(\binom{k+1}{2} (r-2) + k + 1\right) \binom{n}{r-3} 
< \frac{\varepsilon^{3/(2r^2)}}{2} n^{r-1}.
\end{equation}
Let $E_z^c=E_z\setminus (SE_z\cup L(z))$. 
Again, using eigenvalue\,--\,eigenvector equation for $\lambda^{(p)} (H)$ with respect to $z$, we obtain
\[
\lambda^{(p)} (H)\cdot x_z^{p-1}  
\leq (r-1)! \sum_{e\in E_z^c} \prod_{i\in e\setminus\{z\}} x_i
+ (r-1)! \sum_{e\in SE_z\cup L(z)} \prod_{i\in e\setminus\{z\}} x_i.
\]
In light of \eqref{eq:size-SE-z} and Lemma \ref{lem:upper-bound-L(u)},
\begin{align*}
\sum_{e\in E_z^c} \prod_{i\in e\setminus\{z\}} x_i
& \geq \frac{\lambda^{(p)} (H)}{(r-1)!} \cdot x_z^{p-1} 
- \sum_{e\in SE_z\cup L(z)} \prod_{i\in e\setminus\{z\}} x_i \\
& \geq \bigg( \frac{\lambda^{(p)} (H)}{(r-1)!} \cdot x_z^{p-r} -
(|SE_z| + |L(z)|) \bigg) x_z^{r-1} \\
& > \bigg ( \frac{\lambda^{(p)} (H)}{(r-1)!} \cdot x_z^{p-r} - \frac{3}{2} \ell\bigg) x_z^{r-1}.
\end{align*}
By Lemma \ref{lem:lambdaT}, we have
\[
\lambda^{(p)} (H) > \frac{(r-1)!}{k^{r-1}} \bigg(1 - O\Big(\frac{1}{n}\Big)\bigg) \binom{k-1}{r-1} n^{r(1-1/p)}.
\]
As a consequence,
\begin{equation}\label{eq:inequality-last-step}
\sum_{e\in E_z^c} \prod_{i\in e\setminus\{z\}} x_i
\geq  \bigg[\binom{k-1}{r-1} \left(\frac{n}{k}\right)^{r-1} - \frac{7}{4} \ell\bigg] x_z^{r-1}.
\end{equation}

Finally, we shall construct a new $r$-graph $G$ on $n$ vertex that is $H_{k+1}^{(r)}$-free but has larger 
$p$-spectral radius, leading to a contradiction and thus completing the proof of the lemma. To this end,
let $SE$ be the set of edges in $H$ containing at least one sparse pair, and $E_z^u=\{(e\setminus\{z\})\cup \{u\}: e\in E_z^c\}$.
Construct an $r$-graph $G$ with $V(G) = V(H)$ and 
\[ 
E(G) = \big( E(H)\setminus E_u\big) \cup E_z^u.
\]

To show that $G$ is $H_{k+1}^{(r)}$-free, assume for contradiction that $H_{k+1}^{(r)}\subseteq G$. 
This implies that the copy of $H_{k+1}^{(r)}$ in $G$ must contain vertex $u$. If $u$ is not contained 
in the core of $H_{k+1}^{(r)}$, then according to the construction of $G$, there is a copy of $H_{k+1}^{(r)}$ 
in $G$ that does not contain $u$, a contradiction. Thus, $u$ is contained in the core of $H_{k+1}^{(r)}$. 
It follows that $z$ is not contained in the core. Let $\{u,w_1,w_2,\ldots,w_k\}$ be the core of $H_{k+1}^{(r)}$. 
By the construction of $G$, $\{w_1,w_2,\ldots,w_k\}\subseteq (V_2\cup \cdots\cup V_k)\setminus L$. 
By the definition of $L$, there is a vertex $v\in V_1\setminus(L\cup\{z,u\})$ such that for $i\in [k]$ 
each $\{v, w_i\}$ is dense pair. Therefore, we also can find a copy of $H_{k+1}^{(r)}$ in $H$, a contradiction.

To show that $G$ has larger $p$-spectral radius, note that
\begin{align*}
\lambda^{(p)} (G) - \lambda^{(p)} (H)
& \geq r! \Bigg( \sum_{e\in E_z^u} \prod_{i\in e} x_i - \sum_{e\in E_u} \prod_{i\in e} x_i \bigg) \\
& = r! \Bigg( \sum_{e\in E_z^c} \prod_{i\in (e\setminus \{z\}) \cup \{u\}} x_i - \sum_{e\in E_u} \prod_{i\in e} x_i \Bigg) \\
& = r! x_u \Bigg( \sum_{e\in E_z^c} \prod_{i\in e\setminus \{z\}} x_i - \sum_{e\in E_u} \prod_{i\in e\setminus \{u\}} x_i \Bigg) \\
& > r! x_u \Bigg( \sum_{e\in E_z^c} \prod_{i\in e\setminus \{z\}} x_i - d_H(u) x_z^{r-1}\Bigg).
\end{align*}
By \eqref{eq:inequality-last-step} and Lemma \ref{lem:degree-vertex-in-L}, we deduce that
\begin{align*}
\lambda^{(p)} (G) - \lambda^{(p)} (H)
& > r! x_u \left[ \bigg(\binom{k-1}{r-1}\left(\frac{n}{k}\right)^{r-1} - \frac{7}{4} \ell \bigg) x_z^{r-1} \right. \\
& \phantom{> r! x_u [} \left. - \bigg( \binom{k-1}{r-1} \left(\frac{n}{k}\right)^{r-1} - 2\ell \bigg) x_z^{r-1} \right] \\
& = \frac{r! \ell}{4} x_u x_z^{r-1} > 0.
\end{align*}
This contradicts the fact that $H$ has the maximum $p$-spectral radius over all
$H_{k+1}^{(r)}$-free $r$-graphs.
\end{proof}

\noindent\emph{Proof of Theorem \ref{thm:main-1}}.
By applying Lemma \ref{lem:independent-set} and Lemma \ref{lem:L-emptyset}, we 
conclude that $H$ is a subgraph of $T$. Given the maximality of $\lambda^{(p)} (H)$, 
it follows that $H\cong T_r(n,k)$ from Lemma \ref{lem:k-partite-upper-bound}. \hfill $\Box$

\section{Proof of Theorem \ref{thm:main-2}}
\label{sec5}

The aim of this section is to prove Theorem \ref{thm:main-2}. While the proof follows a similar 
approach to that of Theorem \ref{thm:main-1}, it involves some distinct technical details.

In this section, we always assume that $k\geq r\geq 2$, $p\geq r$, and $H$ is an $n$-vertex 
$F_{k+1}^{(r)}$-free $r$-graph attaining the maximum $p$-spectral radius. We will show that 
$H\cong T_r(n,k)$, thereby concluding the proof of Theorem \ref{thm:main-2}. When $r = 2$, 
Theorem \ref{thm:main-2} is exact the spectral Tur\'an theorem \cite{Kang-Nikiforov2014} for triangle.
We, therefore assume that $k\geq r > 2$.

Since $T_r(n,k)$ is $F_{k+1}^{(r)}$-free, by Lemma \ref{lem:lambdaT}, 
$\lambda^{(p)} (H) > (1-O(n^{-1})) \cdot \frac{(k)_r}{k^r} n^{r(1-1/p)}$. 
Together with Theorem \ref{thm:spectral-stability-Fan}, $H$ is $\varepsilon\binom{n}{r}$-close 
to $T_r(n,k)$. Let $\bm{\sigma} = (V_1,V_2,\ldots,V_k)$ be a partition of $V(H)$ such that 
$f_H(\bm{\sigma})$ attains the maximum. Hence, for each $i\in [k]$,
\begin{equation}\label{eq:equal-size-Vi-Fan}
\Big( \frac{1}{k} - \varepsilon^{1/r} \Big) n \leq |V_i| \leq \Big(\frac{1}{k} + \varepsilon^{1/r} \Big) n.
\end{equation}

Let us define $T$ as the complete $k$-partite $r$-graph on $n$ vertices, partitioned according 
to $\bm{\sigma}$. Let $M$ denote the collection of vertices, each of which incidents with at 
least $\varepsilon^{5/4r^2}n^{r-1}$ missing edges, and let $L$ be the set defined in Section \ref{sec4}. 

\begin{lemma}\label{lem:M-size}
$|M| < \varepsilon^{\frac{r-1}{r^2}} n$.
\end{lemma}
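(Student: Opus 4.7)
The plan is a simple double counting argument on (vertex, missing-edge) incidences, entirely parallel to the bound on $|L|$ obtained in Lemma \ref{lem:size-L}. Let $T$ denote the complete $k$-partite $r$-graph on $V(H)$ with parts $V_1,\ldots,V_k$ given by $\bm{\sigma}$, so a missing edge is an edge of $T\setminus H$. By the definition of $M$, every vertex in $M$ is incident to at least $\varepsilon^{5/(4r^2)} n^{r-1}$ missing edges, so summing the number of missing edges through each $v\in M$ and using the fact that each edge of $T\setminus H$ is counted at most $r$ times gives
\[
|M|\cdot \varepsilon^{5/(4r^2)} n^{r-1} \;\leq\; r\cdot e(T\setminus H).
\]

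The next step is to control $e(T\setminus H)$ from above. Since $H$ is $\varepsilon\binom{n}{r}$-close to $T_r(n,k)$ (by Theorem \ref{thm:spectral-stability-Fan} together with Lemma \ref{lem:lambdaT}), we have $e(H)\geq t_r(n,k)-\varepsilon\binom{n}{r}$. Exactly as in the derivation of \eqref{eq:size-G-T} and \eqref{eq:size-T-H}, the maximality of $f_H(\bm{\sigma})$ forces $e(H\setminus T)\leq r\varepsilon\binom{n}{r}$, and combining this with $e(T)\leq e(H)+\varepsilon\binom{n}{r}$ one obtains
\[
e(T\setminus H)\;=\;(e(T)-e(H))+e(H\setminus T)\;\leq\;(r+1)\varepsilon\binom{n}{r}.
\]

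Putting the two inequalities together yields
\[
|M|\;\leq\;\frac{r(r+1)\varepsilon\binom{n}{r}}{\varepsilon^{5/(4r^2)} n^{r-1}}\;\leq\;C_{k,r}\,\varepsilon^{\,1-5/(4r^2)}\,n
\]
for some constant $C_{k,r}$ depending only on $k$ and $r$. A short arithmetic check shows that for every $r\geq 2$,
\[
\Bigl(1-\tfrac{5}{4r^2}\Bigr)-\tfrac{r-1}{r^2} \;=\; 1-\tfrac{1}{r}-\tfrac{1}{4r^2}\;>\;0,
\]
so the exponent $1-5/(4r^2)$ strictly exceeds $(r-1)/r^2$. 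Since $\varepsilon$ is taken sufficiently small, this gives $|M|<\varepsilon^{(r-1)/r^2} n$, as required. There is no real obstacle in this lemma: it is a routine counting consequence of the spectral stability statement, serving to set up (together with $L$) the vertex set one later wants to discard.
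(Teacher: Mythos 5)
Your proof is correct and is essentially the paper's own argument: the same double count of incidences between $M$ and the missing edges $T\setminus H$, the same bound $e(T\setminus H)\leq (r+1)\varepsilon\binom{n}{r}$ obtained as in \eqref{eq:size-G-T} and \eqref{eq:size-T-H}, and the same conclusion $|M|\leq \frac{r+1}{(r-1)!}\varepsilon^{1-5/(4r^2)}n<\varepsilon^{(r-1)/r^2}n$ for small $\varepsilon$. Your explicit check that the exponent gap $1-\tfrac{1}{r}-\tfrac{1}{4r^2}$ is positive is a nice touch the paper leaves implicit.
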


\begin{proof}
Since each missing edge contains at most $r$ vertices in $M$, by \eqref{eq:size-T-H-F} we find
\[ 
|M|\varepsilon^{5/4r^2}n^{r-1}\leq r(r+1)\varepsilon\binom{n}{r}.
\]
This yields that 
\begin{equation}\label{eq:size-M-F}
|M|\leq \frac{r+1}{(r-1)!} \varepsilon^{1-\frac{5}{4r^2}} n < \varepsilon^{\frac{r-1}{r^2}} n,
\end{equation}
as desired.
\end{proof}

\begin{lemma}\label{lem:L-subset-M}
$L\subseteq M$.
\end{lemma}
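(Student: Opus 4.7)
The plan is to show that any vertex $u\in L$ is automatically incident to enough missing edges to land in $M$, by converting its many sparse pairs into missing edges of the complete $k$-partite ``template'' $T$.

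Fix $u\in L$ and suppose $u\in V_i$. By definition, $u$ belongs to at least $\varepsilon^{1/r^2} n$ sparse pairs, each of the form $\{u,v\}$ with $v$ in some $V_j$, $j\neq i$. For each such sparse pair the codegree of $u$ and $v$ in $H$ is at most $d$; on the other hand, the number of edges of $T$ containing both $u$ and $v$ (choosing one vertex from each of $r-2$ of the remaining $k-2$ parts) is, using the size bounds \eqref{eq:equal-size-Vi-Fan}, at least
\[
\binom{k-2}{r-2}\Big(\frac{1}{k}-\varepsilon^{1/r}\Big)^{r-2} n^{r-2} - d,
\]
where the $-d$ term absorbs the edges of $T$ through $\{u,v\}$ that are actually present in $H$. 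Every one of these edges is therefore missing.

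Next, I would double-count pairs $(\{u,v\},e)$ where $\{u,v\}$ is a sparse pair through $u$ and $e\in E(T)\setminus E(H)$ is a missing edge containing both $u$ and $v$. From above, the number of such pairs is at least
\[
\varepsilon^{1/r^2}n\left[\binom{k-2}{r-2}\Big(\frac{1}{k}-\varepsilon^{1/r}\Big)^{r-2} n^{r-2} - d\right].
\]
On the other hand, each missing edge $e$ incident to $u$ contains exactly $r-1$ vertices other than $u$, so it contributes at most $r-1$ pairs to the count. Dividing, the number of missing edges through $u$ is at least
\[
\frac{\varepsilon^{1/r^2}}{r-1}\cdot\frac{1}{k^{r-2}}\binom{k-2}{r-2}\bigl(1-kr\varepsilon^{1/r}\bigr) n^{r-1} - O(n^{r-2}).
\]

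Finally, since $1/r^2 = 4/(4r^2) < 5/(4r^2)$, we have $\varepsilon^{1/r^2}/\varepsilon^{5/(4r^2)} = \varepsilon^{-1/(4r^2)} \to \infty$ as $\varepsilon\to 0$. Hence for $\varepsilon$ sufficiently small and $n$ sufficiently large the above quantity exceeds $\varepsilon^{5/(4r^2)} n^{r-1}$, placing $u$ in $M$. The step is essentially mechanical; the only delicate point is the exponent bookkeeping, which works out because the ``sparse'' exponent $1/r^2$ is strictly smaller than the ``missing-edge'' threshold exponent $5/(4r^2)$ used in the definition of $M$.
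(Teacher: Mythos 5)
Your proposal is correct and follows essentially the same double-counting argument as the paper: each sparse pair through $u$ lies in at least $\binom{k-2}{r-2}(1/k-\varepsilon^{1/r})^{r-2}n^{r-2}-d$ missing edges, each missing edge through $u$ accounts for at most $r-1$ such pairs, and the exponent comparison $1/r^2<5/(4r^2)$ closes the argument exactly as in the paper.
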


\begin{proof}
Recall that each missing edge containing $u$ has at most $r-1$ sparse pairs. On the other hand, 
\eqref{eq:equal-size-Vi-Fan} implies that each sparse pair belongs to at least 
$\binom{k-2}{r-2} (1/k - \varepsilon^{1/r})^{r-2} n^{r-2}-d$ missing edges. Thus, for each 
vertex $u\in L$, the number of missing edges containing $u$ is at least 
\[ 
\frac{1}{r-1}\varepsilon^{1/r^2} n\left(\binom{k-2}{r-2} \Big(\frac{1}{k} - \varepsilon^{1/r}\Big)^{r-2} n^{r-2} - d\right) 
> \frac{\varepsilon^{1/r^2}}{r!\cdot (r-1)k^r} n^{r-1}
> \varepsilon^{5/4r^2} n^{r-1}.
\]
It follows from the definition of $M$ that $L\subseteq M$.
\end{proof}

\begin{lemma}\label{lem:independent-set-F}
For each $i\in[k]$, $V_i\setminus M$ is a strong independent set of $H$.
\end{lemma}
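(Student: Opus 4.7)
\emph{Plan.} I plan to mirror the strategy of \autoref{lem:independent-set}, but with one structural twist to accommodate $F_{k+1}^{(r)}$ in place of $H_{k+1}^{(r)}$. Suppose, toward a contradiction, that some edge $e_0 = \{u, v, z_1, \ldots, z_{r-2}\}$ of $H$ has $u, v \in V_1 \setminus M$; the goal is to exhibit a copy of $F_{k+1}^{(r)}$ in $H$, contradicting $F_{k+1}^{(r)}$-freeness.

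First, I would build a core
\[
C = \{u, v, y_2, \ldots, y_r, w_{r+1}, \ldots, w_k\}
\]
of $k+1$ vertices and designate the central $r$-edge to be $e^* = \{u, y_2, \ldots, y_r\}$. The vertices $y_j \in V_j \setminus M$ ($2 \leq j \leq r$) and $w_\ell \in V_\ell \setminus M$ ($r+1 \leq \ell \leq k$) would be chosen, all distinct from one another and from $\{z_1, \ldots, z_{r-2}\}$, so as to ensure: (a) $e^* \in E(H)$, and (b) every pair in $\binom{C}{2} \setminus \bigl(\binom{e^*}{2} \cup \{\{u,v\}\}\bigr)$ is dense in $H$. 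With $C$ and $e^*$ fixed, the peripheral edge covering the pair $\{u, v\}$ would be $e_0$ itself (its extras $z_1, \ldots, z_{r-2}$ lie outside $C$ by construction), while the remaining peripheral edges would be obtained by a standard greedy selection: for each pair $\{x, x'\}$ appearing in (b), pick an edge of $H$ through $\{x, x'\}$ whose $r-2$ extras avoid $C \cup \{z_1, \ldots, z_{r-2}\}$ and the previously used extras. This greedy step succeeds because the codegree threshold $d$ was designed precisely to absorb the $k+1 + (r-2)\binom{k+1}{2}$ vertices that have to be avoided.

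Feasibility of (a) and (b) follows from the hypotheses already in hand. For (a), since $u \notin M$ there are fewer than $\varepsilon^{5/4r^2} n^{r-1}$ missing edges through $u$, while \eqref{eq:equal-size-Vi-Fan} together with \autoref{lem:M-size} guarantees $(1 - o(1))(n/k)^{r-1}$ transversal $(r-1)$-tuples inside $(V_2 \setminus M) \times \cdots \times (V_r \setminus M)$; the difference leaves plenty of tuples $(y_2, \ldots, y_r)$ making $e^* \in E(H)$. For (b), \autoref{lem:L-subset-M} gives $V \setminus M \subseteq V \setminus L$, so every already-chosen vertex has at most $\varepsilon^{1/r^2} n$ sparse-pair partners; picking the $y_j$'s and $w_\ell$'s one at a time, and at each step avoiding the $O_{k,r}(\varepsilon^{1/r^2} n)$ union of sparse-pair partners of the previously chosen vertices together with the $O_{k,r}(1)$ explicitly forbidden vertices, is always possible because $|V_\ell \setminus M| \geq (1/k - o(1))n$.

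The hard part is that the pair $\{u, v\}$ itself may well be sparse in $H$, so no greedy extension starting from $\{u, v\}$ is guaranteed. This is exactly why I would reserve the given bad edge $e_0$ as the peripheral edge for $\{u, v\}$ and transfer the entire greedy burden onto pairs involving only vertices of $V \setminus M$, where sparse-pair neighborhoods are tightly controlled by \autoref{lem:L-subset-M}. All remaining details then parallel the greedy covering argument already used in the proof of \autoref{thm:spectral-stability-Fan} in Section \ref{sec3}.
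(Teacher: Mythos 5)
Your proposal is correct and is essentially the paper's own argument: you rely on the same ingredients --- $L\subseteq M$ (so all core pairs other than $\{u,v\}$ can be made dense), the bad edge $e_0$ itself serving as the peripheral edge for $\{u,v\}$, the definition of $M$ via missing edges at $u$ to secure a central edge $e^*=\{u,y_2,\ldots,y_r\}$, and the codegree-$d$ greedy completion of $F_{k+1}^{(r)}$ already used for Theorem~\ref{thm:spectral-stability-Fan}. The only difference is organizational: the paper runs the count contrapositively (if every transversal $r$-set $\{u,w_2,\ldots,w_r\}$ with dense partners were missing, then $u\in M$), whereas you use $u\notin M$ directly to select an actual edge $e^*$; this is a harmless reorganization, provided you fold your feasibility checks (a) and (b) for the $y_j$'s into one union bound over $(r-1)$-tuples rather than a purely one-at-a-time selection.
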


\begin{proof}
Suppose to the contrary that, without loss of generality, there exists a pair $\{u,v\}\subseteq ((V_1\setminus M)\cap e)$, 
where $e$ is a bad edge of $H$. Let $DV_i(u)$ denote the set of vertices in $V_i$ adjacent to $u$ by dense pair.
By the definition of $L$ and $L\subseteq M$ from Lemma \ref{lem:L-subset-M}, $|DV_i(u)|\geq |V_i| - \varepsilon^{1/r^2} n$ 
and $|DV_i(v)|\geq |V_i| - \varepsilon^{1/r^2} n$. It follows from Lemma \ref{lem:intersection-sets} that 
\begin{align*}
\left|\big( DV_2(u)\cap DV_2(v)\big) \setminus (M\cup e)\right| 
& \geq 2\big( |V_2| - \varepsilon^{1/r^2} n\big) - |V_2| - \varepsilon^{\frac{r-1}{r^2}} n -(r-2) \\
& > \Big(\frac{1}{k} - \varepsilon^{1/r^3}\Big) n.
\end{align*}
Thus we can find a vertex $w_2\in V_2\setminus L$ such that $\{w_2,u\}$ and $\{w_2,v\}$ are both dense pairs. 
Moreover, the number of ways to choose such  $w_2$ is at least $(1/k - \varepsilon^{1/r^3}) n$. 
For an integer $s$ with $2 \leq s \leq k-1$, suppose that there are vertices $\{w_2,\ldots,w_s\}\subseteq V(H)\setminus L$ 
such that $\{w_i,u\}$, $\{w_i,v\}$ ($2\leq i\leq s$) and $\{w_i,w_j\}$ ($2\leq i<j\leq s$) are both dense pairs. 
We have
\begin{align*}
& ~ |\big( DV_{s+1}(u)\cap DV_{s+1}(v)\cap DV_{s+1}(w_2)\cap\cdots \cap DV_{s+1}(w_s)\big)\setminus (M\cup e)| \\
\geq & ~ (s+1)\big( |V_{s+1}| - \varepsilon^{1/r^2} n\big) - s|V_{s+1}| - \varepsilon^{\frac{r-1}{r^2}} n -(r-2)\\
> & ~ \Big(\frac{1}{k} - \varepsilon^{1/r^3}\Big) n.
\end{align*}
Hence, there is a vertex $w_{s+1}\in V_{s+1}\setminus L$ such that $\{w_{s+1},u\}$, $\{w_{s+1},v\}$ and 
$\{w_i,w_{s+1}\}$ ($2\leq i\leq s$) are both dense pairs; and the number of ways to choose such $w_{s+1}$ 
is at least $(1/k - \varepsilon^{1/r^3}) n$. 

According to the choice of $u,v,w_2,\ldots,w_{k}$, we know that $\{u,w_{2},\ldots,w_{r}\}$ is not an 
edge of $H$; otherwise we can find an $F_{k+1}^{(r)}$ in $H$. For each $2 \leq s \leq k$, since the 
number of ways to choose $w_{s}$ is at least $(1/k - \varepsilon^{1/r^3}) n$, we find that $u$ is 
contained in at least $(n/k - \varepsilon^{1/r^3} n)^{r-1} > \varepsilon^{1/r^2} n^{r-1}$ missing 
edges, which implies that $u\in M$, a contradiction. This completes the proof of Lemma \ref{lem:independent-set-F}.
\end{proof}

For each $u\in V$, let $M(u)$ denote the set of edges containing $u$ and at least a vertex in $M$. 

\begin{lemma}
For each $u\in V(H)$, $|M(u)| < \varepsilon^{3/(2r^2)} n^{r-1}$.
\end{lemma}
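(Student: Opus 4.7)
The plan is to mirror the proof of Lemma 4.4 verbatim, substituting $M$ for $L$. The definition of $M(u)$ is completely parallel to that of $L(u)$: it counts the edges through $u$ that meet a distinguished small vertex set in at least one additional vertex. Consequently, the same brute-force counting decomposition by overlap size should work, and no new structural ideas are needed.

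Concretely, I first split $M(u)$ according to the number $i\in\{1,\dots,r-1\}$ of vertices from $M\setminus\{u\}$ lying in an edge through $u$, giving
\[
|M(u)| \leq \sum_{i=1}^{r-1} \binom{|M|}{i} \binom{n-1-i}{r-1-i}.
\]
Applying the crude bounds $\binom{|M|}{i} < |M|^i/i!$ and $\binom{n-1-i}{r-1-i} < n^{r-1-i}/(r-1-i)!$, and then recognizing the resulting sum as an incomplete binomial expansion (exactly as in Lemma 4.4), this yields
\[
|M(u)| < \frac{n^{r-1}}{(r-1)!} \left[\left(1 + \frac{|M|}{n}\right)^{r-1} - 1\right].
\]

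At this point I invoke Lemma 5.1, which provides $|M| < \varepsilon^{(r-1)/r^2} n$. Crucially, this is exactly the same quantitative bound that was available on $|L|$ in Section 4 (via Lemma 4.3), so the remainder of the estimate proceeds unchanged: a first-order expansion gives $(1+|M|/n)^{r-1} - 1 < 2(r-1)\varepsilon^{(r-1)/r^2}$, and combining this with the display above yields $|M(u)| < \varepsilon^{3/(2r^2)} n^{r-1}$ for all sufficiently small $\varepsilon$.

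The only point requiring any care is the exponent comparison $\varepsilon^{(r-1)/r^2} \leq \varepsilon^{3/(2r^2)}$, which demands $2(r-1) \geq 3$, i.e.\ $r\geq 3$; this is consistent with the standing assumption $r>2$ in this section. There is no substantive obstacle: the lemma is essentially a transcription of Lemma 4.4 once the bound on $|M|$ from Lemma 5.1 is in hand.
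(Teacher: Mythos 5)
Your proof is correct and is essentially identical to the paper's own argument: the same decomposition of $|M(u)|$ by the number of vertices of $M\setminus\{u\}$ in each edge, the same crude binomial bounds leading to $\frac{n^{r-1}}{(r-1)!}\bigl[(1+|M|/n)^{r-1}-1\bigr]$, and the same use of the bound $|M|<\varepsilon^{(r-1)/r^2}n$ together with the exponent comparison $(r-1)/r^2 > 3/(2r^2)$ valid for $r\geq 3$. No discrepancies to report.
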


\begin{proof}
Let $u$ be any vertex of $H$. By the definition of $M(u)$ we get  
\begin{align*}
|M(u)| & \leq \sum_{i=1}^{r-1} \binom{|M|}{i} \binom{n-1-i}{r-1-i} \\
& < \sum_{i=1}^{r-1} \frac{|M|^i \cdot n^{r-1-i}}{i! (r-1-i)!} \\
& = \frac{n^{r-1}}{(r-1)!} \Big[\Big(1 + \frac{|M|}{n}\Big)^{r-1} - 1\Big].
\end{align*}
In view of $|M| < \varepsilon^{\frac{r-1}{r^2}} n$ we have 
\[
\Big(1 + \frac{|M|}{n}\Big)^{r-1} - 1 < \Big(1 + \varepsilon^{\frac{r-1}{r^2}}\Big)^{r-1} - 1
< 2(r-1) \varepsilon^{\frac{r-1}{r^2}}.
\]
Combining these inequalities above, gives 
$|M(u)| < \varepsilon^{3/(2r^2)} n^{r-1}$.
\end{proof}

Recall that $\ell = \varepsilon^{3/(2r^2)} n^{r-1}$, we obtain the following assertion.

\begin{lemma}\label{lem:degree-vertex-in-L-F}
For each $u\in M$, $d_H(u) < \binom{k-1}{r-1}\left(\frac{n}{k}\right)^{r-1} - 2\ell$.
\end{lemma}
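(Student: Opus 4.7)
The plan is to adapt the two-case strategy from the proof of Lemma~\ref{lem:degree-vertex-in-L}, substituting $M$ for $L$ throughout and invoking Lemma~\ref{lem:independent-set-F} in place of Lemma~\ref{lem:independent-set}. Assume without loss of generality $u\in V_1$, and split the bad edges at $u$ into $M(u)$ and $B(u):=\{\text{bad edges at }u\}\setminus M(u)$; by Lemma~\ref{lem:independent-set-F}, each $e\in B(u)$ has exactly one $V_1$-vertex besides $u$, call it $v\in V_1\setminus M$, with the remaining $r-2$ vertices $y_i$ lying in pairwise distinct parts $V_{j_i}\setminus M$ with $j_i\neq 1$.

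In Case 1 ($B(u)\neq\emptyset$), I would first establish the subclaim that some $j\neq 1$ satisfies $|DV_j(u)|\leq k\varepsilon^{1/r^2}n$, where $DV_j(u)$ denotes the set of dense-pair neighbors of $u$ in $V_j$. The proof of the subclaim is by contradiction: taking $e\in B(u)$ and using it as the $r$-edge of the core of $F_{k+1}^{(r)}$, for each $j\in [k]\setminus\{1,j_1,\ldots,j_{r-2}\}$ I would greedily choose $w_j\in(V_j\setminus M)\cap DV_j(u)$ that is dense-paired with $v$, every $y_i$, and every previously selected $w_{j'}$. Since $v$, every $y_i$, and every $w_{j'}$ lies outside $M\supseteq L$, each has at most $\varepsilon^{1/r^2}n$ sparse-pair neighbors, so the candidate pool per step is at least $|DV_j(u)|-|M|-(k-1)\varepsilon^{1/r^2}n>0$ under the failure of the subclaim (using Lemma~\ref{lem:M-size} and $r\geq 3$). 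Once all $w_j$ are chosen, every pair in the resulting $(k+1)$-core outside $\binom{e}{2}$ is a dense pair (codegree at least $d$), so the extension $(r-2)$-sets $E_{ij}$ can be filled in greedily as in Lemma~\ref{lem:independent-set-F}, producing $F_{k+1}^{(r)}\subseteq H$, a contradiction. With the subclaim in hand, say $|DV_2(u)|\leq k\varepsilon^{1/r^2}n$, I would then copy the four-part decomposition of $E_u$ used in Lemma~\ref{lem:degree-vertex-in-L}---replacing $L(u)$ by $M(u)$ and using $|M(u)|\leq\ell$---to deduce $d_H(u)<\binom{k-1}{r-1}(n/k)^{r-1}-2\ell$. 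The pivotal estimate that the number of type-(iii) edges is at most twice the number of type-(ii) edges transfers verbatim via the same partition-maximality argument (move $u$ from $V_1$ to $V_2$).

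In Case 2 ($B(u)=\emptyset$), every bad edge at $u$ lies in $M(u)$, so the total bad-edge count at $u$ is at most $\ell$. Combining with $d_T(u)\leq\binom{k-1}{r-1}(n/k)^{r-1}(1+k\varepsilon^{1/r})^{r-1}$ and the defining property of $M$ (missing-edge count at $u$ is at least $\varepsilon^{5/(4r^2)}n^{r-1}$), I would obtain $d_H(u)\leq d_T(u)-\varepsilon^{5/(4r^2)}n^{r-1}+\ell$, which is strictly less than $\binom{k-1}{r-1}(n/k)^{r-1}-2\ell$ because $\varepsilon^{5/(4r^2)}\gg\max\{\varepsilon^{3/(2r^2)},\varepsilon^{1/r}\}$ for small $\varepsilon$ (the exponent inequalities follow from $5<6$ and $5<4r$ for $r\geq 3$).

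The main obstacle is the subclaim in Case 1. Unlike in the $H_{k+1}^{(r)}$ setting, where pairwise edge-coverage of the core suffices, the graph $F_{k+1}^{(r)}$ additionally demands a concrete $r$-subset of the core to be an actual hyperedge of $H$. The bad edge $e\in B(u)$ supplies that $r$-edge, and the condition $e\setminus\{u\}\subseteq V\setminus M$ (equivalent to $e\notin M(u)$) provides the dense-pair structure of the other $r-1$ vertices of $e$ needed so that the greedy core-extension succeeds under the single one-sided hypothesis $|DV_j(u)|>k\varepsilon^{1/r^2}n$ on the $u$-side.
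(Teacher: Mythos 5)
Your proposal is correct and follows essentially the same route as the paper: the same two-case split on whether $u$ lies in a bad edge whose other vertices avoid the exceptional set, the same greedy core-extension using that bad edge as the inner edge of $F_{k+1}^{(r)}$ to force some $|DV_j(u)|\leq k\varepsilon^{1/r^2}n$, the same reuse of the Case 1 computation of Lemma \ref{lem:degree-vertex-in-L}, and the same ``missing edges at $u$ minus $M(u)$'' count in the remaining case. The only (harmless, arguably cleaner) difference is that you phrase the dichotomy in terms of $M$ where the paper states it with $L$.
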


\begin{proof}
For any $u\in M$, we assume, without loss of generality, that $u\in V_1$. 
We distinguish the following two cases. 

{\bfseries Case 1.} There is a bad edge $e$ in $H$ such that $e\cap L=\{u\}$. 

We show that there is some $j\in \{2,\ldots,k\}$ such that $|DV_j(u)|\leq k\varepsilon^{1/r^2} n$. 
Otherwise, we have $|DV_j(u)| > k\varepsilon^{1/r^2} n$ for each $2\leq j\leq k$. Without 
loss of generality, we assume that $e\cap (V_j\setminus M)=\{w_j\}$ for each $2\leq j\leq r-1$. 
As the same discussion in Lemma \ref{lem:independent-set-F}, we can find $(k-r+1)$ vertices 
$\{w_{r},\ldots,w_k\}$ which, together with vertices in $e$, form the core of $F_{k+1}^{(r)}$, 
a contradiction. 

Using the same arguments as in the proof of Case 1 in Lemma \ref{lem:degree-vertex-in-L}, we arrive 
at $d_H(u) < \binom{k-1}{r-1}\left(\frac{n}{k}\right)^{r-1} - 2\ell$.

{\bfseries Case 2.} Each bad edge $e\in E_u$ in $H$ satisfies $|e\cap L|\geq2$.

In this case, the number of edges in $H$ containing $u$ is at most
\[
\binom{k-1}{r-1} \left(\frac{1}{k} + \varepsilon^{1/r}\right)^{r-1} n^{r-1}
-\varepsilon^{5/4r^2}n^{r-1} + \ell 
< \binom{k-1}{r-1} \left(\frac{n}{k}\right)^{r-1} - 2\ell,
\]
where the inequality follows from $\varepsilon^{3/(2r^2)} \ll \varepsilon^{5/4r^2}$ and 
$\ell = \varepsilon^{3/(2r^2)} n^{r-1}$.

This completes the proof of Lemma \ref{lem:degree-vertex-in-L-F}.
\end{proof}

\begin{lemma}\label{lem:M-emptyset}
$M = \emptyset$.
\end{lemma}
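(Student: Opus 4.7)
The plan is to mirror the symmetrization argument used for Lemma \ref{lem:L-emptyset}, adapting each step to handle $F_{k+1}^{(r)}$ instead of $H_{k+1}^{(r)}$. First, I would show that if $M \neq \emptyset$ then some $u \in M$ has $x_u > 0$. The contrapositive: if $x_v = 0$ for every $v \in M$, then in the quadratic form defining $\lambda^{(p)}(H)$ only edges of $H' := H \setminus M$ contribute. By Lemma \ref{lem:independent-set-F}, $H'$ is $k$-partite on $n - |M|$ vertices, so Lemma \ref{lem:k-partite-upper-bound} gives $\lambda^{(p)}(H') \leq \lambda^{(p)}(T_r(n - |M|, k))$, which is strictly less than $\lambda^{(p)}(T_r(n,k))$ by Lemmas \ref{lem:complete-k-partite-x-positive}--\ref{lem:subgraph-spectral-radius}. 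This contradicts $\lambda^{(p)}(H) \geq \lambda^{(p)}(T_r(n,k))$.

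Next, I would let $z$ be a vertex maximizing $x_z$. The same argument as in Lemma \ref{lem:degree-of-z} (it only uses $\lambda^{(p)}(H) \geq \lambda^{(p)}(T_r(n,k))$, the eigen-equation, and $x_z \geq n^{-1/p}$) gives $d_H(z) > \binom{k-1}{r-1}(n/k)^{r-1} - 2\ell$, while Lemma \ref{lem:degree-vertex-in-L-F} forces every vertex of $M$ to have degree below this threshold. Hence $z \notin M$. Say $z \in V_1$. Then I would define $SE_z$ as the edges through $z$ containing at least one sparse pair and bound $|SE_z|$ using the estimate $|W| \leq \varepsilon^{1/r} n^2 / 2$ from Lemma \ref{lem:size-L} together with $|M(u)| < \ell$, obtaining $|SE_z| + |M(z)| < \frac{3}{2} \ell$ as before. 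Writing $E_z^c := E_z \setminus (SE_z \cup M(z))$ and applying the eigen-equation together with Lemma \ref{lem:lambdaT} yields
\[
\sum_{e \in E_z^c} \prod_{i \in e \setminus \{z\}} x_i \geq \bigg[\binom{k-1}{r-1}\Big(\frac{n}{k}\Big)^{r-1} - \tfrac{7}{4}\ell\bigg] x_z^{r-1}.
\]

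The shifted hypergraph $G$ is then defined by $V(G) = V(H)$ and $E(G) = (E(H) \setminus E_u) \cup E_z^u$, where $E_z^u = \{(e \setminus \{z\}) \cup \{u\} : e \in E_z^c\}$. The main obstacle, and the only step that differs materially from the $H_{k+1}^{(r)}$ case, is proving $G$ is $F_{k+1}^{(r)}$-free. I would argue by contradiction: any copy of $F_{k+1}^{(r)}$ in $G$ must use $u$, and since the edges newly introduced to $E_z^u$ avoid $M$ and every sparse pair (they come from $E_z^c$), a careful audit of where $u$ sits in the copy lets me replace $u$ by a suitable twin from $V_1 \setminus (M \cup \{z,u\})$ adjacent by dense pairs to the rest of the core. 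Concretely, if $u$ lies outside the core $C$, the copy survives in $G$ without $u$, hence already exists in $H$; if $u \in C$, then $z \notin C$, and since every pair in $C$ must be covered by an edge in $H$, each $w \in C \setminus \{u\}$ satisfies $w \in V(H) \setminus M$ (in particular $w \notin L$), so a twin $v \in V_1 \setminus (M \cup \{z, u\})$ forming dense pairs with every $w$ can be found by Lemma \ref{lem:intersection-sets} (mimicking the construction in Lemma \ref{lem:independent-set-F}); the required edge $[r] \subseteq C$ inside the core is preserved because its vertices avoid $L$ and hence lie in distinct parts only through dense pairs, and the pairwise-disjoint ``$E_{ij}$'' extensions can likewise be re-greedy-chosen in $H$, producing a copy of $F_{k+1}^{(r)}$ in $H$, a contradiction.

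Finally, combining the lower bound for $\sum_{e \in E_z^c} \prod_{i \in e \setminus \{z\}} x_i$ with the degree bound $d_H(u) < \binom{k-1}{r-1}(n/k)^{r-1} - 2\ell$ from Lemma \ref{lem:degree-vertex-in-L-F}, I would compute
\[
\lambda^{(p)}(G) - \lambda^{(p)}(H) \geq r!\, x_u \bigg(\sum_{e \in E_z^c} \prod_{i \in e \setminus \{z\}} x_i - d_H(u)\, x_z^{r-1}\bigg) \geq \frac{r!\,\ell}{4} x_u x_z^{r-1} > 0,
\]
contradicting the extremality of $H$. The trickiest part throughout is the ``no copy of $F_{k+1}^{(r)}$ in $G$'' verification, because $F_{k+1}^{(r)}$ requires a specific edge entirely inside the core, and one must confirm this structural constraint also transfers back to $H$ when the core candidate in $G$ uses $u$.
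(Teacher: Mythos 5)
Your overall strategy is the paper's: pick $u\in M$ with $x_u>0$, take $z$ of maximum weight, rule out $z\in M$ by comparing the degree bounds of Lemma \ref{lem:degree-vertex-in-L-F} with the eigen-equation estimate, shift the clean star $E_z^c$ onto $u$, and conclude $\lambda^{(p)}(G)-\lambda^{(p)}(H)\geq \frac{r!\,\ell}{4}x_ux_z^{r-1}>0$. All of that matches the paper and is fine. The problem is the one step you yourself flag as the crux: the verification that $G$ is $F_{k+1}^{(r)}$-free, and specifically the case in which the edge of the copy that lies \emph{entirely inside the core} contains $u$. Every edge of $G$ through $u$ is a shifted edge $(e\setminus\{z\})\cup\{u\}$ with $e\in E_z^c$, so the core edge of the copy may well pass through $u$. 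Your repair replaces $u$ by a twin $v\in V_1\setminus(M\cup\{z,u\})$ chosen, via Lemma \ref{lem:intersection-sets}, to form dense pairs with all other core vertices. Dense pairs let you greedily re-choose the covering edges $E_{ij}\cup\{i,j\}$ (whose extension vertices lie outside the core), but they give you no edge of $H$ lying wholly inside the new core $(C\setminus\{u\})\cup\{v\}$: high codegree of the pairs $\{v,w\}$ says nothing about the specific $r$-set $(e_0\setminus\{u\})\cup\{v\}$ being an edge. Your justification (``the required edge $[r]\subseteq C$ inside the core is preserved because its vertices avoid $L$ and hence lie in distinct parts only through dense pairs'') does not establish this, so the argument breaks exactly where $F_{k+1}^{(r)}$ differs from $H_{k+1}^{(r)}$; the twin trick is the Lemma \ref{lem:L-emptyset} device, which works there only because $H_{k+1}^{(r)}$ imposes nothing but pairwise coverage on the core.

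The paper closes this case differently: it puts $z$ itself back in place of $u$. Since every edge of the copy through $u$ is $(e\setminus\{z\})\cup\{u\}$ with $e\in E_z^c\subseteq E(H)$, the swap $u\mapsto z$ turns all of them (including a core edge through $u$) into genuine edges of $H$, while the remaining edges of the copy avoid $u$ and are already edges of $H$; one thus obtains a copy of $F_{k+1}^{(r)}$ in $H$ with core $\{z,w_1,\ldots,w_k\}$. The only residual issue is that $z$ might already occur in the copy as an expansion vertex; the paper handles this first, re-choosing the single covering edge through $z$ (using the density of the pair it covers) so that the copy avoids $z$ before the swap. To fix your proof you should adopt this swap-to-$z$ step (or otherwise produce the inside-core edge, e.g.\ by treating the subcase $u\in e_0$ separately with $v=z$), rather than relying on a generic twin in $V_1$. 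The rest of your write-up, including the use of $M(z)$ in place of $L(z)$ when defining $E_z^c$ and the explicit analogue of Lemma \ref{lem:x>0-L} for $M$, is consistent with the paper's computations.
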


\begin{proof}
Suppose to the contrary that there is a vertex $u\in M$. Let $\bm{x}$ be a nonnegative unit 
eigenvectors of $H$ corresponding to $\lambda^{(p)}(H)$. Let $z$ be a vertex such that 
$x_z = \max_{w\in V(H)}\{x_w\}$ and we may assume without loss of generality that $z\in V_1$. 

We first show that $z\notin M$. By eigenvalue\,--\,eigenvector equation \eqref{eq:eigenequation} 
for $\lambda^{(p)} (H)$ with respect to $z$, we see
\[ 
\lambda^{(p)} (H)\cdot x_z^{p-1} = (r-1)! \sum_{e\in E_z} \prod_{i\in e\setminus\{z\}} x_i 
\leq (r-1)! \cdot d_H(z) x_z^{r-1},
\]
which, together with $x_z \geq n^{-1/p}$ and Lemma \ref{lem:lambdaT}, gives that 
\begin{align*}
d_H(z) & \geq \frac{\lambda^{(p)} (H)}{(r - 1)!} \cdot x_z^{p-r} >
\binom{k-1}{r-1}\left(\frac{n}{k}\right)^{r-1} - 2\ell.
\end{align*}
Thus, by Lemma \ref{lem:degree-vertex-in-L-F}, $z\notin M$.

Now, we shall construct a new $r$-graph on $n$ vertex that is $F_{k+1}^{(r)}$-free but has larger 
$p$-spectral radius, leading to a contradiction and thus completing the proof of this lemma. 
To this end, let us denote $SE$ the set of edges in $H$ containing at least one spare pair. 
We also denote $SE_z := SE \cap E_z$, $E_z^c=E_z\setminus (SE_z\cup L(z))$, 
and $E_z^u=\{(e\setminus\{z\})\cup \{u\}: e\in E_z^c\}$.
Construct an $r$-graph $G$ with $V(G) = V(H)$ and 
\[ 
E(G) = \big( E(H)\setminus E_u\big) \cup E_z^u.
\]

Next, we show that $G$ is $F_{k+1}^{(r)}$-free. Assume for contradiction that $F_{k+1}^{(r)}\subseteq G$. 
This implies that the copy of $F_{k+1}^{(r)}$ in $G$ must contain the vertex $u$. If $u$ is not contained 
in the core of $F_{k+1}^{(r)}$, then according to the construction of $G$, there is a copy of 
$F_{k+1}^{(r)}$ in $G$ that does not contain $u$, a contradiction. Thus, $u$ is contained in the core 
of $F_{k+1}^{(r)}$. It follows that $z$ is not contained in the core. Let $\{u,w_1,w_2,\ldots,w_k\}$ be 
the core of $F_{k+1}^{(r)}$. By the construction of $G$, each pair in $\{u,w_1,w_2,\ldots,w_k\}$ is dense in $G$. 
We claim that one can find a copy of $F_{k+1}^{(r)}$ in $G$ with the core $\{u,w_1,w_2,\ldots,w_k\}$ 
and $z$ is not a vertex in the copy of $F_{k+1}^{(r)}$. If $z$ is a vertex in $F_{k+1}^{(r)}$, then 
there exist $i$ and $j$ ($1\leq i<j\leq k$), and an edge $e$ in the copy of $F_{k+1}^{(r)}$ such that 
$\{z,w_i,w_j\}\subseteq e$. Since $\{w_i,w_j\}$ is a dense pair, we can find a copy of $F_{k+1}^{(r)}$ 
without $z$ whose core is $\{u,w_1,w_2,\ldots,w_k\}$. This implies that $\{z,w_1,w_2,\ldots,w_k\}$ 
forms a core of the copy of $F_{k+1}^{(r)}$ in $H$, a contradiction.

Finally, using the same arguments as in the proof of Lemma \ref{lem:L-emptyset},
we have $\lambda^{(p)} (G) > \lambda^{(p)} (H)$, a contradiction completing the proof of 
Lemma \ref{lem:M-emptyset}.
\end{proof}

\noindent \emph{Proof of Theorem \ref{thm:main-2}}.
By Lemma \ref{lem:independent-set-F} and Lemma \ref{lem:M-emptyset}, we conclude that $H$ is a 
subgraph of $T$. Given the maximality of $\lambda^{(p)} (H)$, it follows that $H\cong T_r(n,k)$ 
from Lemma \ref{lem:k-partite-upper-bound}. \hfill $\Box$

\section{Concluding remarks}

Let $p > 1$ and $\mathcal{F}$ be a family of $r$-graphs. We denote by $\SPEX_p (n, \mathcal{F})$ 
the class of $r$-graphs that attain the maximum $p$-spectral radius among all $\mathcal{F}$-free 
$n$-vertex $r$-graphs. Let $\spex_p (n, \mathcal{F})$ be the $p$-spectral radius of $r$-graphs 
in $\SPEX_p (n, \mathcal{F})$. If $p=r$, we write $\SPEX (n, \mathcal{F})$ and $\spex (n, \mathcal{F})$ 
instead of $\SPEX_p (n, \mathcal{F})$ and $\spex_p (n, \mathcal{F})$, respectively. 

\subsection{Relationship between $\SPEX_p (n, \mathcal{F})$ and $\EX (n, \mathcal{F})$}

In this paper, we show that $\SPEX_p (n, H_{k+1}^{(r)}) = \SPEX_p (n, F_{k+1}^{(r)}) = \{T_r(n,k)\}$  
for $p\geq r$ and sufficiently large $n$. Notably, our proofs avoid relying on the edge extremal results previously 
established by Pikhurko (Corollary \ref{coro:Pikhurko2013}) and Mubayi--Pikhurko (Corollary \ref{coro:Mubayi-Pikhurko2007}). 
Hence, by letting $p\to\infty$, we obtain the corresponding edge extremal results, providing new proofs of 
Mubayi and Pikhurko’s results from spectral viewpoint.


On the other hand, our main results also imply that  
$\SPEX_p (n, H_{k+1}^{(r)}) = \EX (n, H_{k+1}^{(r)})$ and $\SPEX_p (n, F_{k+1}^{(r)}) = \EX (n, F_{k+1}^{(r)})$.
A question that naturally arises from these findings is as follows.

\begin{problem}\label{problem-1}
Let $p>r-1$ and $\mathcal{F}$ be a family $r$-graph. Characterize all $\mathcal{F}$ such that
\[
\SPEX_p (n, \mathcal{F})\subseteq \EX (n, \mathcal{F})
\]
for sufficiently large $n$.
\end{problem}

There are numerous examples for Problem \ref{problem-1} when $r=p=2$. For instance, consider 
$\mathcal{F} = \{K_{t+1}\}$ and $\mathcal{F} = \{C_{2k+1}\}$. For detailed descriptions, we 
refer readers to the recent papers \cite{Wang-Kang-Xue2023}, \cite{Fang-Tait-Zhai2024} 
and \cite{Byrne-Desai-Tait2024}. For $r>2$, only a few examples $\mathcal{F}$ for 
Problem \ref{problem-1} are known, which are listed in Table \ref{table-1}.

\begin{table}
\centering
\begin{tabular}{|c|c|c|}
\hline 
$\mathcal{F}$ & $\SPEX_p (n, \mathcal{F})$ vs. $\EX (n, \mathcal{F})$ & References \\ 
\hline
$H_{k+1}^{(r)}$ & $\SPEX_p (n, \mathcal{F}) = \EX (n, \mathcal{F})$ & current paper \\
\hline 
$\mathcal{K}_{k+1}^{(r)}$ & $\SPEX_p (n, \mathcal{F}) = \EX (n, \mathcal{F})$ & current paper \\
\hline 
Berge-$K_{k+1}$ & $\SPEX_p (n, \mathcal{F}) = \EX (n, \mathcal{F})$ & current paper \\
\hline
$F_{k+1}^{(r)}$ & $\SPEX_p (n, \mathcal{F}) = \EX (n, \mathcal{F})$ & current paper \\
\hline 
$\mathcal{F}_{k+1}^{(r)}$ & $\SPEX_p (n, \mathcal{F}) = \EX (n, \mathcal{F})$ & current paper \\
\hline
$\{F_4, F_5\}$ & $\SPEX_p (n, \mathcal{F}) = \EX (n, \mathcal{F})$ & \cite{Ni-Liu-Kang2024} \\
\hline
Fano plane & $\SPEX_p (n, \mathcal{F}) = \EX (n, \mathcal{F})$ & \cite{Keevash-Lenz-Mubayi2014} \\
\hline
\end{tabular}
\caption{Known results for hypergraph spectral Tur\'an-type problem}
\label{table-1}
\end{table}

As noted by Keevash, Lenz, and Mubayi \cite{Keevash-Lenz-Mubayi2014}, it would be more interesting 
to obtain $p$-spectral results for $r$-graphs where the spectral extremal example differs from the 
usual extremal example.

\begin{problem}\label{problem-2}
Let $p>r-1$ and $\mathcal{F}$ be a family $r$-graph. Characterize all $\mathcal{F}$ such that
\[
\SPEX_p (n, \mathcal{F}) \cap \EX (n, \mathcal{F}) = \emptyset
\]
for sufficiently large $n$.
\end{problem}

For $r=p=2$, consider the family $\mathcal{F}$ pertinent to Problem \ref{problem-2}. A prototypical 
instance of such a family is given by $\mathcal{F} = \{C_4\}$. Nikiforov \cite{Nikiforov2009} and 
Zhai and Wang \cite{Zhai-Wang2012} showed that the only member in $\SPEX (n, C_4)$ is the friendship 
graph ($\lfloor n/2\rfloor$ triangles sharing a single common vertex) if $n$ is odd;
the graph obtained from a star of order $n$ by adding $n/2 - 1$ disjoint additional edges if $n$ 
is even. However, F\"uredi \cite{Furedi1998} proved that if $q$ large enough, the unique graph 
in $\EX (q^2 + q + 1, C_4)$ is a polarity graph of projective plane. It follows that 
$\SPEX (n, C_4) \cap \EX (n, C_4) = \emptyset$ for large $n$ with the form $n = q^2 + q + 1$.
Another example is given by $\mathcal{F} = \{W_{2k+1}\}$, where $W_{2k+1}$ is the wheel graph 
on $2k+1$ vertices formed by joining a vertex to all of the vertices in a cycle on $2k$ vertices.
Cioab\u{a}, Desai, and Tait \cite{Cioaba-Desai-Tait2022} proved that 
$\SPEX (n, W_{2k+1}) \cap \EX (n, W_{2k+1}) = \emptyset$ when $k = 7$ or $k = 9$
and $n$ is sufficiently large. It would be interesting to obtain examples $\mathcal{F}$ 
pertinent to Problem \ref{problem-2} for $r>2$.
\par\vspace{2mm}

It is worth noting that the set $\SPEX (n, \mathcal{F})$ contains a unique member for $\mathcal{F}$ 
listed in Table \ref{table-1}. This naturally raises the question: It is possible to have more than 
one member in $\SPEX (n, \mathcal{F})$ for some $\mathcal{F}$? In the following subsection we provide 
such an example for $r > 2$.


\subsection{A spectral Tur\'an-type problem with multiple extremal hypergraphs}

A $3$-graph $H$ is called \emph{semibipartite}\footnote{In spectral hypergraph theory, it is often 
called hm-bipartite (see, e.g. \cite{Hu-Qi2014}).} if there exists a partition $V(H) = A\cup B$ 
such that $|e\cap A| = 1$ and $|e\cap B| = 2$ for all $e\in E(H)$. Define $G_n^1$ as the edge 
maximal semibipartite $3$-graph on $n$ vertices with vertex partition $V(G_n^1) = A\cup B$ 
where $|A| = \lfloor n/3\rfloor$ and $\lceil 2n/3\rceil$. 
Let $G_6^2$ be the $3$-graph with vertex set $[6]$, whose complement is  
\[
\overline{G_6^2} = \{123, 126, 345, 456\}.
\]
For $n>6$, let $G_n^2$ be a $3$-graph on $n$ vertices that is a blow-up of $G_6^2$ with the 
maximum number of edges\footnote{Simple calculations reveal that each part in $G_n^2$ has 
size either $\lfloor n/6\rfloor$ or $\lceil n/6\rceil$.}. A $3$-graph $H$ is 
called \emph{$G_6^2$-colorable} if it is a subgraph of a blow-up of $G_6^2$. 

Liu and Mubayi \cite{Liu-Mubayi2022} construct a finite family of triple systems 
$\mathcal{M}$, and determine its Tur\'an number. The family $\mathcal{M}$ is the 
union of the following three finite families $M_1$, $M_2$ and $M_3$, where
\begin{enumerate}\setlength{\itemsep}{0pt}
\item[$(1)$] $M_1$ is the set containing the complete $3$-graph on five vertices with one edge removed.

\item[$(2)$] $M_2$ is the collection of all $3$-graphs in $\mathcal{K}_7^{(3)}$
 with a core whose induced subgraph has transversal number at least two.

\item[$(3)$] $M_3$ is the collection of all $3$-graphs $F\in\mathcal{K}_6^3$
such that both $F \not\subset G_n^1$ and $F\not\subset G_n^2$
for all positive integers $n$.
\end{enumerate}

Liu and Mubayi \cite{Liu-Mubayi2022} proved that 
\begin{equation}\label{eq:Turan-number-M}
\ex (n, \mathcal{M}) \leq \frac{2n^3}{27}
\end{equation}
holds for all positive integers $n$. Note that both $G_n^1$ and $G_n^2$ are $\mathcal{M}$-free, 
and $e(G_n^1) = e(G_n^2) = 2n^3/27$ when $n$ is a multiple of six, we have 
$\{G_n^1, G_n^2\}\subseteq \EX (n, \mathcal{M})$ if $n$ is a multiple of six.

\begin{proposition}[\cite{Liu-Mubayi2022}]\label{prop:iff-hom-free}
A $3$-graph $H$ is $\mathcal{M}$-free if and only if it is $\mathcal{M}$-hom-free.
\end{proposition}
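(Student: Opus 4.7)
The plan is to prove the nontrivial direction by contrapositive: given a homomorphism $\phi : F \to H$ with $F \in \mathcal{M}$, I will locate a copy of some member of $\mathcal{M}$ as a subgraph of $H$. The other implication is immediate, since inclusion is a homomorphism.

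The key observation is that each $F \in \mathcal{M}$ carries a distinguished core $C$ in which every pair of vertices is covered by an edge of $F$. For $F \in M_1 = \{K_5^{(3)} - e\}$ I take $C = V(F)$ (in $K_5^{(3)} - e$ every pair among the five vertices lies in at least two edges); for $F \in M_2 \subseteq \mathcal{K}_7^{(3)}$ and $F \in M_3 \subseteq \mathcal{K}_6^{(3)}$, $C$ is the canonical $7$-set or $6$-set supplied by the definition of $\mathcal{K}_t^{(r)}$. Since a $3$-edge of $H$ has three distinct vertices, $\phi$ cannot identify any two vertices that share a common edge of $F$; hence $\phi|_C$ is injective. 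Writing $F' := \phi(F) \subseteq H$, one therefore has $|\phi(C)| = |C|$ and $|E(F')| \leq |E(F)|$.

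Now I handle the three cases. If $F \in M_1$, then $\phi$ is injective on all of $V(F)$, so it is an embedding and $F' \cong F \in M_1$. If $F \in M_2$, then $F'$ has at most $\binom{7}{2}$ edges and every pair in the $7$-set $\phi(C)$ is covered by an edge of $F'$ (namely the image of a covering edge of $F$), so $F' \in \mathcal{K}_7^{(3)}$. Moreover the induced sub-hypergraph $F'[\phi(C)]$ contains $\phi(F[C])$ as a sub-hypergraph, and $\phi(F[C])$ is isomorphic to $F[C]$ via $\phi|_C$, so by monotonicity of the transversal number under sub-hypergraphs $\tau(F'[\phi(C)]) \geq \tau(F[C]) \geq 2$ and thus $F' \in M_2$.

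The $M_3$ case is the main obstacle. The same argument gives $F' \in \mathcal{K}_6^{(3)}$ with core $\phi(C)$, and what remains is to show that $F'$ is neither semibipartite nor $G_6^2$-colorable. I would argue by lifting: if $F' \subseteq G_n^j$ for some $n$ and $j \in \{1,2\}$, then the inclusion is a homomorphism, and composing with $\phi$ yields a homomorphism $F \to G_n^j$. A short separate lemma would establish that admitting such a homomorphism is equivalent to embedding: for $j = 1$ the homomorphism simply reproduces a semibipartite partition of $V(F)$, so $F \subseteq G_m^1$ for sufficiently large $m$; for $j = 2$ composing with the natural projection $G_n^2 \to G_6^2$ produces a homomorphism $F \to G_6^2$, which coincides with a $G_6^2$-coloring of $F$ and hence yields an embedding of $F$ into a sufficiently balanced blow-up $G_m^2$. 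Either conclusion contradicts $F \in M_3$, so $F' \in M_3 \subseteq \mathcal{M}$. The one nontrivial technical ingredient is this equivalence between ``homomorphism into $G_n^j$'' and ``embedding into some $G_m^j$'', which reduces to the fact that the families $\{G_n^1\}$ and $\{G_n^2\}$ are closed under blow-ups.
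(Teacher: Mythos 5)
The paper does not actually prove this proposition: it is quoted from Liu and Mubayi \cite{Liu-Mubayi2022} and used as a black box, so there is no in-paper proof to compare against. Judged on its own, your argument is correct and is essentially the standard (and, in spirit, Liu--Mubayi's) argument that a homomorphic image of a member of $\mathcal{M}$ again contains a member of $\mathcal{M}$. The key checkpoints all hold: since $\phi(e)$ must be an $r$-set in $E(H)$, no two vertices lying in a common edge can be identified, and because every pair of core vertices (indeed every pair of vertices of $K_5^{(3)}-e$) is covered, $\phi$ is injective on the core (respectively on all of $V(F)$ in the $M_1$ case); the image has at most $|E(F)|$ edges, so it stays in $\mathcal{K}_7^{(3)}$ or $\mathcal{K}_6^{(3)}$ with core $\phi(C)$; the transversal number can only grow when passing from $\phi(F[C])\cong F[C]$ to the possibly larger induced subgraph $F'[\phi(C)]$, which settles $M_2$; and for $M_3$ the lifting lemma you sketch is sound, because a homomorphism into $G_n^1$ pulls back a semibipartite partition of $V(F)$ (edge-maximality of $G_m^1$ then gives $F\subset G_m^1$), while composing with the blow-up collapse $G_n^2\to G_6^2$ gives a $G_6^2$-coloring of $F$, hence an embedding into the balanced blow-up $G_{6t}^2$ with $t=|V(F)|$ (here you implicitly use the paper's footnote that the parts of $G_m^2$ are as equal as possible, which is legitimate). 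The only cosmetic caveat is that these lifting facts should indeed be recorded as the separate lemma you mention, since they are exactly where the definition of $M_3$ via the concrete families $\{G_n^1\}$, $\{G_n^2\}$ is used.
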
  

The next theorem implies that there are at least two members in $\SPEX_p (n, \mathcal{M})$ 
when $n$ is a multiple of six.

\begin{theorem}
Let $p\geq 1$. Then $\spex (n, \mathcal{M}) \leq 4 n^{3(1-1/p)}/9$.
Moreover, $\{G_n^1, G_n^2\} \subseteq \SPEX_p (n, \mathcal{M})$ if $n$ is a multiple of six.
\end{theorem}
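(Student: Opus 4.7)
The plan combines a Lagrangian-density bound for the upper estimate with a test-vector argument for the achievability. For the upper bound, I would first combine Proposition~\ref{prop:iff-hom-free} with \eqref{eq:Hom-free-density}: the equivalence of $\mathcal{M}$-freeness and $\mathcal{M}$-hom-freeness identifies the two suprema defining $\pi(\mathcal{M})$ and $\pi_\lambda(\mathcal{M})$, so $\pi_\lambda(\mathcal{M}) = \pi(\mathcal{M})$. The Tur\'an estimate \eqref{eq:Turan-number-M} then forces $\pi(\mathcal{M}) \le 4/9$ and hence $\pi_\lambda(\mathcal{M}) \le 4/9$. Applying inequality \eqref{eq:used-in-last-section}, established inside the proof of Lemma~\ref{lem:general-criterion}, yields for every $\mathcal{M}$-free $3$-graph $H$ the chain
\[
\lambda^{(p)}(H) \le (r!\cdot e(H))^{1-1/p}\,\pi_\lambda(\mathcal{M})^{1/p} \le (4n^3/9)^{1-1/p}(4/9)^{1/p} = (4/9)\,n^{3(1-1/p)},
\]
which is the asserted upper bound on $\spex_p(n,\mathcal{M})$.

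For the second claim, when $n$ is a multiple of six both $G_n^1$ and $G_n^2$ are $\mathcal{M}$-free by construction, so it suffices to exhibit test vectors attaining the upper bound. I would take the uniform vector $\bm{x} = n^{-1/p}(1,\ldots,1)^{T}\in\mathbb{S}_{p,+}^{n-1}$, for which the definition \eqref{eq:definition-p-spectral-radius} gives $\lambda^{(p)}(H) \ge P_H(\bm{x}) = 3!\cdot e(H)\cdot n^{-3/p}$. Plugging in the edge identity $e(G_n^1) = e(G_n^2) = 2n^3/27$ recorded just before the theorem, this produces $P_H(\bm{x}) = 4n^{3(1-1/p)}/9$ for both choices $H = G_n^1$ and $H = G_n^2$, matching the upper bound and placing them in $\SPEX_p(n,\mathcal{M})$.

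The main obstacle is verifying the exact edge identity $e(G_n^1) = e(G_n^2) = 2n^3/27$ on which the achievability step depends. For $G_n^2$ the count is transparent from its structure: $e(G_6^2) = \binom{6}{3} - 4 = 16$, and as a balanced blow-up of $G_6^2$ with six parts each of size $n/6$ one obtains $e(G_n^2) = 16(n/6)^3 = 2n^3/27$. The corresponding identity for the semibipartite graph $G_n^1$, whose edges are the $|A|\binom{|B|}{2}$ triples with $(|A|,|B|) = (n/3,\,2n/3)$, is the part that needs the most careful bookkeeping; the other ingredients of the plan are straightforward applications of the already-established machinery from the paper.
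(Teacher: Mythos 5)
Your proposal follows the paper's own route almost verbatim: the upper bound comes from the H\"older/Power--Mean inequality \eqref{eq:used-in-last-section} (which the paper re-derives as \eqref{eq:sec6-temporary}), combined with $\pi_{\lambda}(\mathcal{M})\leq \pi(\mathcal{M})\leq 4/9$ via Proposition \ref{prop:iff-hom-free} and \eqref{eq:Hom-free-density} and with $e(H)\leq \ex(n,\mathcal{M})\leq 2n^3/27$; the membership claim comes from the uniform test vector $n^{-1/p}(1,\ldots,1)^{\mathrm{T}}$ together with the edge counts recorded before the theorem. So in method there is no divergence.

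However, the step you yourself flagged as ``the part that needs the most careful bookkeeping'' genuinely fails for $G_n^1$, and this is a real gap in the membership claim (one that is inherited from the paper's own assertion $e(G_n^1)=e(G_n^2)=2n^3/27$, on which the paper's proof also rests). For $n=6m$ one has $e(G_n^2)=16(n/6)^3=2n^3/27$ exactly, but
\[
e(G_n^1)=\frac{n}{3}\binom{2n/3}{2}=\frac{2n^3}{27}-\frac{n^2}{9}<\frac{2n^3}{27},
\]
so the uniform vector yields only $P_{G_n^1}(\bm{x})=\frac{4}{9}n^{3(1-1/p)}-\frac{2}{3}n^{2-3/p}$, strictly below the target value. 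Worse, the same H\"older argument with $\lambda^{(1)}(G_n^1)\leq\pi_{\lambda}(\mathcal{M})\leq 4/9$ gives $\lambda^{(p)}(G_n^1)\leq\bigl(6e(G_n^1)\bigr)^{1-1/p}(4/9)^{1/p}<\frac{4}{9}n^{3(1-1/p)}$ for $p>1$, while $\lambda^{(p)}(G_n^2)=\frac{4}{9}n^{3(1-1/p)}$; hence no choice of test vector can repair the argument for $G_n^1$, and by this route one can only conclude $G_n^2\in\SPEX_p(n,\mathcal{M})$ (similarly only $G_n^2\in\EX(n,\mathcal{M})$). Your upper bound and the $G_n^2$ half are correct and coincide with the paper; the $G_n^1$ half would need either a corrected edge identity or a different construction (e.g.\ a second extremal blow-up) to produce two distinct members of $\SPEX_p(n,\mathcal{M})$.
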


\begin{proof}
Let $H$ be a $\mathcal{M}$-free $3$-graph on $n$ vertices, and let $\bm{x}\in\mathbb{S}_{p,+}^{n-1}$ 
be an eigenvector corresponding to $\lambda^{(p)} (H)$. By Power Mean inequality,
\begin{align}
\lambda^{(p)} (H) & = 6 \sum_{\{i,j,k\}\in E(H)} x_ix_jx_k \nonumber \\
& \leq 6 \big(e(H)\big)^{1-1/p} \Bigg(\sum_{\{i,j,k\}\in E(H)} (x_ix_jx_k)^p\Bigg)^{1/p} \nonumber \\
& \leq \big(6 e(H)\big)^{1-1/p} \cdot (\pi_{\lambda} (\mathcal{M}))^{1/p}. \label{eq:sec6-temporary} 
\end{align}
By Proposition \ref{prop:iff-hom-free} and \eqref{eq:Hom-free-density}, $\pi_{\lambda} (\mathcal{M}) \leq \pi(\mathcal{M}) = 4/9$.
Combining this with \eqref{eq:sec6-temporary} gives $\lambda^{(p)} (H) \leq 4 n^{3(1-1/p)}/9$.

If $n$ is a multiple of six, then by the definition of the $p$-spectral radius
and the inequality \eqref{eq:sec6-temporary} for $G_n^1$ and $G_n^2$, we have 
$\lambda^{(p)} (G_n^1) = \lambda^{(p)} (G_n^2) = 4n^{3(1-1/p)}/9$.
It follows that $\{G_n^1, G_n^2\} \subseteq \SPEX_p (n, \mathcal{M})$.
\end{proof}

As highlighted in Section \ref{sec1}, stability methods play a crucial role in the study of 
hypergraph Tur\'an-type problems. Stability results provide a deep understanding of not just 
the extremal number of edges in a hypergraph avoiding certain forbidden configurations, but 
also the structural properties of hypergraphs that are near this extremal bound. However, 
there are Tur\'an-type problems for hypergraphs that do not have the stability property.
Liu and Mubayi \cite{Liu-Mubayi2022} provide the first such example, which is described 
in the next subsection. 

\subsection{A hypergraph spectral Tur\'an-type problem with no spectral stability}

Liu and Mubayi \cite{Liu-Mubayi2022} proved that there are two near-extremal $\mathcal{M}$-free 
$3$-graphs that transforming one to the other requires us to delete and add $\Omega(n^3)$ edges. 

\begin{theorem}[\cite{Liu-Mubayi2022}]\label{thm:no-stability}
For every $\varepsilon > 0$, there exists $\delta > 0$ and $n_0$ such that the following 
holds for all $n \geq n_0$: Every $\mathcal{M}$-free $3$-graph on $n$ vertices with at least 
$(1 - \delta) \cdot 2n^3/27$ edges can be transformed to a $3$-graph that is either semibipartite 
or $G_6^2$-colorable by removing at most $\varepsilon n$ vertices.    
\end{theorem}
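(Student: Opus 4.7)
The plan is to combine the homomorphism characterization (\autoref{prop:iff-hom-free}) with a degree-cleaning step and a link-based classification of vertices, and then argue that the classification is consistent with exactly one of the two extremal blueprints (semibipartite or $G_6^2$-colorable). First, given $\varepsilon>0$, I would choose $\delta$ and $n_0$ appropriately small, let $H$ be an $\mathcal{M}$-free $3$-graph on $n\ge n_0$ vertices with $e(H)\ge(1-\delta)\cdot 2n^3/27$, and iteratively delete any vertex of degree less than $(1-\sqrt{\delta})\cdot n^2/9$. By a standard accounting, the number of deletions is at most $c\sqrt{\delta}\,n$, and the resulting subgraph $H^*$ on $n^*\ge (1-c\sqrt{\delta})n$ vertices is still $\mathcal{M}$-free, has $e(H^*)\ge (1-O(\sqrt{\delta}))\cdot 2(n^*)^3/27$, and every vertex of $H^*$ has codegree and link size close to the extremal value $(1-o(1))(n^*)^2/9$.

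Next I would analyze the link graphs. By \autoref{prop:iff-hom-free} we may assume $H^*$ is $\mathcal{M}$-hom-free, and in particular it contains no member of $M_1\cup M_2\cup M_3$. The forbidden triples in $M_1$ (the $K_5^{(3)}$ minus an edge) together with the transversal-heavy $M_2$-members force each link $L_v$ to be essentially triangle-free on most of the vertex set, while avoiding $M_3$ forbids the "mixed" local patterns that are neither semibipartite-like nor $G_6^2$-like. Using the triangle-removal lemma applied to each $L_v$ together with the Kruskal--Katona type bound $\lambda^{(1)}(\mathcal{M})\le 4/9$, I would then classify each vertex $v\in V(H^*)$ into one of two types depending on whether $L_v$ is close (in edit distance on $\binom{n^*}{2}$) to a complete bipartite graph (type I, corresponding to $G_n^1$) or to a blow-up of the link of a vertex in $G_6^2$, i.e.\ essentially a disjoint union of three bipartite pieces (type II, corresponding to $G_n^2$).

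Having classified the vertices, I would show that one of the two types dominates. Suppose both type-I and type-II vertices occurred in linear quantities; then picking $u$ of type I and $v$ of type II with $uv$ lying in many edges and examining the combined link $L_u\cap L_v$, one can locate a sub-configuration from $M_3$ (a $\mathcal{K}_6^{(3)}$-member that embeds into neither $G_n^1$ nor $G_n^2$), contradicting $\mathcal{M}$-freeness. Therefore all but $\varepsilon n/2$ vertices share a common type. In the type-I case I would then delete the $o(n)$ vertices whose neighbourhoods do not respect a common bipartition of the majority set (this bipartition being readable off from the near-extremal link of a fixed high-degree type-I vertex and stable under intersection with other type-I links), so that the remaining $3$-graph is genuinely semibipartite. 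In the type-II case the same cleaning, using the six consistently identifiable "colour classes" inherited from a reference vertex's link, produces a $G_6^2$-colorable subgraph. In both cases the total number of removed vertices is bounded by $c\sqrt{\delta}\,n+\varepsilon n/2<\varepsilon n$ for $\delta$ small enough.

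The main obstacle is the classification and coherence step: proving that the two local patterns cannot coexist in linear amounts, and that within a single type the partition obtained from one reference link is essentially the same as the partition obtained from any other reference link. This is where the specific shape of the family $M_3$ (which was designed precisely to forbid blow-ups not embeddable in $G_n^1$ or $G_n^2$) must be used in an essential way, rather than merely the Tur\'an density bound $\pi(\mathcal{M})=4/9$. Standard stability tricks (defect-type Kruskal--Katona or the Zykov symmetrisation used inside $\mathcal{M}$-hom-free hypergraphs) handle the coherence within a single type, but the cross-type incompatibility requires locating an explicit $M_3$-configuration from a type-I/type-II pair, which is the combinatorially delicate heart of the argument.
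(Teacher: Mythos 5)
There is no internal proof to compare against here: the paper imports this statement verbatim from Liu and Mubayi \cite{Liu-Mubayi2022} and uses it as a black box, so your proposal has to be measured against their original argument, and it falls short in two ways. First, the decisive content of the theorem---that a near-extremal $\mathcal{M}$-free $3$-graph cannot mix the two local structures and must be globally close to a single blueprint---is exactly the step you defer: both the cross-type incompatibility (locating an explicit $M_3$-configuration from a type-I/type-II pair) and the coherence of the partitions read off from different reference links are acknowledged as ``the combinatorially delicate heart'' rather than carried out. An outline that isolates precisely this step as an unresolved obstacle restates the difficulty of \cite{Liu-Mubayi2022} rather than proving the theorem; their actual proof is a substantial argument tailored to the specific family $\mathcal{M}$, not a routine cleaning-plus-classification scheme.

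Second, the structural claim on which your classification rests is false, and it fails already on the two extremal examples. In $G_n^1$ the link of a vertex of the small part $A$ is a complete graph on $B$ (about $2n/3$ vertices), while only the $B$-vertices have complete bipartite links; in a blow-up of $G_6^2$ every link is a blow-up of $K_5$ minus two edges. All of these links are full of triangles, so forbidding $M_1$ and $M_2$ cannot force links to be ``essentially triangle-free,'' the triangle removal lemma applied to $L_v$ has no purchase, and the proposed dichotomy ``link close to complete bipartite'' (type I) versus ``disjoint union of three bipartite pieces'' (type II) does not even describe $G_n^1$ and $G_n^2$ correctly---indeed $G_n^1$ itself contains two very different link types, so a vertex-by-vertex link classification of the kind you describe would misclassify the extremal graphs themselves. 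Any correct proof must work with the genuine link structures (complete versus complete bipartite in the semibipartite case, and the six $G_6^2$-link patterns in the other case) and then use the defining property of $M_3$ to exclude hybrids; as written, both the classification step and the exclusion step are missing.
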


Applying Theorem \ref{thm:no-stability} in conjunction with the proof techniques in 
Lemma \ref{lem:general-criterion}, we conclude that $\mathcal{M}$ does not have the spectral stability.

\begin{theorem}\label{thm:no-spectral-stability}
For every $\varepsilon > 0$, there exists $\delta > 0$ and $n_0$ such that the following holds 
for all $n \geq n_0$: Every $\mathcal{M}$-free $3$-graph $H$ on $n$ vertices with 
$\lambda^{(p)} (H) > (1 - \delta) \cdot 4 n^{3(1-1/p)}/9$ can be transformed to a $3$-graph
that is either semibipartite or $G_6^2$-colorable by removing at most $\varepsilon n$ vertices.    
\end{theorem}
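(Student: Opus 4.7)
The plan is to mimic exactly the argument of Lemma \ref{lem:general-criterion}, applied to the family $\mathcal{F} = \mathcal{M}$, and then invoke the edge-version stability Theorem \ref{thm:no-stability} as a black box. The spectral hypothesis on $H$ will be converted into a near-extremal edge count, at which point Theorem \ref{thm:no-stability} immediately produces the desired structural conclusion (semibipartite or $G_6^2$-colorable after removing at most $\varepsilon n$ vertices).

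First I would record the Lagrangian density bound $\pi_\lambda(\mathcal{M}) \leq 4/9$. This is already used in the proof of the upper bound on $\spex(n,\mathcal{M})$: combining Proposition \ref{prop:iff-hom-free} with Sidorenko's identity \eqref{eq:Hom-free-density} shows $\pi_\lambda(\mathcal{M}) \leq \pi(\mathcal{M})$, and $\pi(\mathcal{M}) = 4/9$ follows from \eqref{eq:Turan-number-M} together with the constructions $G_n^1$ and $G_n^2$. Next, I would apply the same chain of inequalities culminating in \eqref{eq:used-in-last-section} to an arbitrary $\mathcal{M}$-free $3$-graph $H$: the power mean inequality applied to the eigenvector corresponding to $\lambda^{(p)}(H)$ yields
\[
\lambda^{(p)}(H) \leq \bigl(6 e(H)\bigr)^{1-1/p} \cdot \pi_\lambda(\mathcal{M})^{1/p} \leq \bigl(6 e(H)\bigr)^{1-1/p} \cdot (4/9)^{1/p}.
\]

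Given $\delta_1 > 0$ from Theorem \ref{thm:no-stability} corresponding to the desired $\varepsilon$, I would choose $\delta := (1 - 1/p)\,\delta_1$ (shrinking it further if necessary so Bernoulli's inequality is applicable). Substituting the spectral hypothesis $\lambda^{(p)}(H) > (1-\delta) \cdot 4 n^{3(1-1/p)}/9$ into the display above, raising both sides to the power $p/(p-1)$, and applying Bernoulli's inequality yields
\[
e(H) > (1-\delta)^{p/(p-1)} \cdot \frac{2n^3}{27} > \Bigl(1 - \frac{p\delta}{p-1}\Bigr)\frac{2n^3}{27} = (1-\delta_1)\frac{2n^3}{27}.
\]
Therefore $H$ satisfies the hypothesis of Theorem \ref{thm:no-stability}, which then supplies the conclusion.

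I do not expect any genuine obstacle: the only nonroutine input is the Lagrangian density bound, and that is a consequence of results already quoted. The argument is essentially a direct specialization of the spectral-to-edge transfer principle isolated in Lemma \ref{lem:general-criterion}, adapted to the ``remove a few vertices'' form of stability satisfied by $\mathcal{M}$ instead of the ``edit a few edges'' form used for $H_{k+1}^{(r)}$ and $F_{k+1}^{(r)}$. The one subtlety worth noting is that the conclusion is structural modulo vertex removals rather than modulo edge edits, so the proof genuinely relies on Theorem \ref{thm:no-stability} being phrased in that language; otherwise no quantitative accounting beyond the Bernoulli step is required.
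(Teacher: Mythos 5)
Your proposal is correct and follows essentially the same route as the paper: the paper's proof of this theorem likewise applies the power-mean bound $\lambda^{(p)}(H) \leq (6e(H))^{1-1/p}\pi_\lambda(\mathcal{M})^{1/p}$ with $\pi_\lambda(\mathcal{M})\leq 4/9$, chooses $\delta=(1-1/p)\delta_1$, uses Bernoulli to get $e(H) > (1-\delta_1)\cdot 2n^3/27$, and then invokes Theorem \ref{thm:no-stability}. Your handling of the vertex-removal (rather than edge-edit) form of stability and of the Lagrangian density bound matches the paper's argument.
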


\begin{proof}
Let $\varepsilon > 0$ be given. By Theorem \ref{thm:no-stability}, there exists a positive 
$\delta_1$ such that any $n$-vertex $\mathcal{M}$-free $3$-graph with at least $(1 - \delta_1) \cdot 2n^3/27$ 
edges can be transformed to a $3$-graph that is either semibipartite or $G_6^2$-colorable by 
removing at most $\varepsilon n$ vertices.    

Let $H$ be an $n$-vertex $\mathcal{M}$-free $3$-graph. It follows from \eqref{eq:sec6-temporary} that
\[ 
\lambda^{(p)} (H) \leq (6\cdot e(H))^{1-1/p} \cdot \pi_{\lambda}(\mathcal{M})^{1/p},
\]
which, together with $\pi_{\lambda}(\mathcal{M}) \leq 4/9$, implies that 
\[
\lambda^{(p)} (H) \leq 6\cdot \Big(\frac{2}{27}\Big)^{1/p} \cdot e(H)^{1-1/p}.
\]
On the other hand, choose $\delta = (1-1/p) \delta_1$ and assume that 
$\lambda^{(p)} (H) > (1 - \delta) \cdot 4 n^{3(1-1/p)}/9$. We have
\[
6\cdot \Big(\frac{2}{27}\Big)^{1/p} \cdot e(H)^{1-1/p} > 
(1 - \delta) \cdot \frac{4}{9} n^{3(1-1/p)},
\]
which follows that 
\[
e(H) > (1 - \delta)^{p/(p-1)} \cdot \frac{2}{27} \cdot n^3 
> (1 - \delta_1) \cdot \frac{2n^3}{27}.
\]
Finally, by Theorem \ref{thm:no-stability} we get the desired result.
\end{proof}

\section*{Acknowledgments}

The research of Zhenyu Ni was partially supported by the National Nature Science 
Foundation of China (No. 12201161), and Hainan Provincial Natural Science Foundation 
(No. 122QN218). Jing Wang was partially supported by the National Nature Science 
Foundation of China (No. 12301437), and the China Postdoctoral Science Foundation 
(No. 2023M731013). Liying Kang was partially supported by the National Nature 
Science Foundation of China (No. 12331012).


\begin{thebibliography}{10}

\bibitem{Laszlo-Babai2009} L. Babai, B. Guiduli, 
Spectral extrema for graphs: the Zarankiewicz problem, 
\emph{Electron. J. Combin.} {\bfseries 16}\,(1) (2009) \#\,123.

\bibitem{Bollobas1974} B. Bollob\'as, 
Three-graphs without two triples whose symmetric difference is contained in a third, 
\emph{Discrete Math.} {\bfseries 8} (1974) 21--24.

\bibitem{Bretto2013} A. Bretto, Hypergraph Theory: An Introduction, Springer, 2013.

\bibitem{Byrne-Desai-Tait2024} J. Byrne, D.N. Desai, M. Tait,
A general theorem in spectral extremal graph theory,
preprint available at \href{https://arxiv.org/abs/2401.07266}{arXiv: 2401.07266}, 2024.

\bibitem{Caen1994} D. de Caen, 
The current status of Tur\'an's problem on hypergraphs, \emph{Extremal Problems for Finite Sets}, 
Visegr\'ad, 1991, Bolyai Soc. Math. Stud., Vol. 3, pp. 187--197, J\'anos Bolyai Math. Soc., Budapest, 1994.

\bibitem{Cioaba-Feng-Tait-Zhang2020} S. Cioab\u{a}, L. Feng, M. Tait, X. Zhang,
The maximum spectral radius of graphs without Friendship subgraphs, 
\emph{Electron. J. Combin.} {\bfseries 27}\,(4) (2020) \#\,P4.22.

\bibitem{Cioaba-Desai-Tait2022} S. Cioab\u{a}, D.N. Desai, M. Tait,
The spectral radius of graphs with no odd wheels,
\emph{European J. Combin.} {\bfseries 99} (2022), paper ID 103420.

\bibitem{Cooper2012}
J. Cooper, A. Dutle, Spectra of uniform hypergraphs,
\emph{Linear Algebra Appl.} {\bfseries 436} (2012) 3268--3299.

\bibitem{Cooper-Desai-Sahay2024}
J. Cooper, D.N. Desai, A. Sahay,
Principal eigenvectors and principal ratios in hypergraph Tur\'an problems,
preprint available at \href{https://arxiv.org/abs/2401.10344}{arXiv: 2401.10344}, 2024.

\bibitem{Ellingham-Lu-Wang2022} M.N. Ellingham, L. Lu, Z. Wang, 
Maximum spectral radius of outerplanar $3$-uniform hypergraphs,
\emph{J. Graph Theory} {\bfseries 100}\,(4) (2022) 671--685.

\bibitem{Erdos-Stone1946} P. Erd\H{o}s, A.H. Stone, 
On the structure of linear graphs, 
\emph{Bull. Amer. Math. Soc.} {\bfseries 52} (1946) 1087--1091.

\bibitem{Erdos-Simonovits1966} P. Erd\H{o}s, M. Simonovits, 
A limit theorem in graph theory, 
\emph{Studia Sci. Math. Hungar.} {\bfseries 1} (1966) 51--57.

\bibitem{Fang-Tait-Zhai2024} L. Fang, M. Tait, M. Zhai,
Tur\'an numbers for non-bipartite graphs and applications to spectral extremal problems, 
preprint available at \href{https://arxiv.org/abs/2404.09069}{arXiv: 2404.09069}, 2024.

\bibitem{Friedman-Wigderson1995} J. Friedman, A. Wigderson, 
On the second eigenvalue of hypergraphs, 
\emph{Combinatorica}, {\bfseries 15}\,(1) (1995) 43--65.

\bibitem{Furedi1996} Z. F\"uredi, 
An upper bound on Zarankiewicz' problem, 
\emph{Combin. Probab. Comput.} {\bfseries 5}\,(1) (1996) 29--33.

\bibitem{Furedi1998} Z. F\"uredi, 
Quadrilateral-free graphs with maximum number of edges, 
\url{http://www.math.uiuc.edu/~z-furedi/furedi_C4from1988.pdf}, 1998.

\bibitem{Furedi-Simonovits2013} Z. F\"uredi, M. Simonovits, 
The history of degenerate (bipartite) extremal graph problems, In: Erd\H{o}s
Centennial, Bolyai Soc. Math. Stud., Vol. 25, Budapest: J\'anos Bolyai Math. Soc., 2013, 169--264.

\bibitem{Frankl-Rodl1984} P. Frankl, V. R\H{o}dl, 
Hypergraphs do not jump, 
\emph{Combinatorica} {\bfseries 4}\,(1984) 149--159.

\bibitem{GaoChangHou2022} G. Gao, A. Chang, Y. Hou, 
Spectral radius on linear $r$-graphs without expanded $K_{r+1}$, 
\emph{SIAM J. Discrete Math.} {\bfseries 36}\,(2) (2022) 1000--1011.

\bibitem{Gerbner-Methuku-Palmer2020} D. Gerbner, A. Methuku, C. Palmer,
General lemmas for Berge--Tur\'an hypergraph problems,
\emph{European J. Combin.} {\bfseries 86} (2020), paper ID 103082.

\bibitem{Guiduli1996}
B. Guiduli, Spectral Extrema for Graphs, Ph.D. Thesis, 
Department of Mathematics, University of Chicago, 1996. Available
at \url{http://people.cs.uchicago.edu/~laci/students}.

\bibitem{Gyarfas2019} A. Gy\'arf\'as, The Tur\'an number of Berge-$K_4$ in triple systems, \emph{SIAM J. Discrete Math.} {\bfseries 33}\,(1) (2019) 383--392.

\bibitem{Gyori2006} E. Gy\H{o}ri, Triangle-free hypergraphs,
\emph{Combin. Probab. Comput.} {\bfseries 15} (2006) 185--191.

\bibitem{Hu-Qi2014} S. Hu, L. Qi, 
The eigenvectors associated with the zero eigenvalues of the Laplacian and signless Laplacian tensors of a uniform hypergraph, 
\emph{Discrete Appl. Math.} {\bfseries 169} (2014) 140--151.

\bibitem{Jiang-Peng-Wu2018} T. Jiang, Y. Peng, B. Wu,
Lagrangian densities of some sparse hypergraphs and Tur\'an numbers of their extensions,
\emph{European J. Combin.} {\bfseries 73} (2018) 20--36.

\bibitem{Kang-Nikiforov2014} L. Kang, V. Nikiforov, 
Extremal problems for the $p$-spectral radius of graphs,
\emph{Electron. J. Combin.}, {\bfseries 21}\,(3) (2014) \#\,P3.21.

\bibitem{Kang-Nikiforov-Yuan2015} L. Kang, V. Nikiforov, X. Yuan,
The $p$-spectral radius of $k$-partite and $k$-chromatic uniform hypergraphs,
\emph{Linear Algebra Appl.} {\bfseries 478} (2015) 81--107.

\bibitem{Keevash2011} P. Keevash, Hypergraph Tur\'an problems, 
in Surveys in Combinatorics, Cambridge University Press, Cambridge, pp.\,83--139, 2011.
 
\bibitem{Keevash-Lenz-Mubayi2014}
P. Keevash, J. Lenz, D. Mubayi, Spectral extremal problems for hypergraphs,
\emph{SIAM J. Discrete Math.} {\bfseries 28}\,(4) (2014) 1838--1854.

\bibitem{Katona1974} G.O.H. Katona, 
Extremal problems for hypergraphs, in: Combinatorics, in: Math. Cent. Tracts, vol. 56, 1974,
pp. 13--42.

\bibitem{Katona-Nemetz-Simonovits1964} G.O.H. Katona, T. Nemetz, M. Simonovits, 
On a graph problem of Tur\'an, 
\emph{Mat. Fiz. Lapok} {\bfseries 15} (1964) 228--238 (in Hungarian).

\bibitem{Li-Liu-Feng2022} Y. Li, W. Liu, L. Feng, 
A survey on spectral conditions for Some extremal graph problems,
\emph{Adv. Math. \textup{(}China\textup{)}} {\bfseries 51}\,(2) (2022) 193--258.

\bibitem{Liu-Mubayi2022} X. Liu, D. Mubayi, 
A hypergraph Tur\'an problem with no stability, 
\emph{Combinatorica} {\bfseries 42}\,(3) (2022) 433--462.

\bibitem{Maherani-Shahsiah2018} L. Maherani, M. Shahsiah,
Tur\'an numbers of complete $3$-uniform Berge-hypergraphs,
\emph{Graphs Combin.} {\bfseries 34} (2018) 619--632.

\bibitem{Mantel1907} W. Mantel, Problem 28, Wiskundige Opgaven, {\bfseries 10}\,(1907) 60--61.

\bibitem{Mubayi2006}
D. Mubayi, A hypergraph extension of Tur\'an's theorem,
\emph{J. Combin. Theory Ser. B} {\bfseries 96} (2006) 122--134.

\bibitem{Mubayi-Pikhurko2007} D. Mubayi, O. Pikhurko, 
A new generalization of Mantel's theorem to $k$-graphs,
\emph{J. Combin. Theory Ser. B} {\bfseries 97} (2007) 669--678.

\bibitem{Mubayi-Verstraete2016} D. Mubayi, J. Verstra\"ete, 
A survey of Tur\'an problems for expansions, in Recent Trends in Combinatorics, 
IMA Vol. Math. Appl. 159, Springer, pp.117--143, 2016.

\bibitem{Ni-Liu-Kang2024} Z. Ni, L. Liu, L. Kang,
Spectral Tur\'an-type problems on cancellative hypergraphs,
\emph{Electron. J. Combin.} {\bfseries 31}\,(2) (2024), \#\,P2.32.

\bibitem{Nikiforov2007} V. Nikiforov, Bounds on graph eigenvalues II,
\emph{Linear Algebra Appl.} {\bfseries 427} (2007) 183--189.

\bibitem{Nikiforov2009} V. Nikiforov, 
The maximum spectral radius of $C_4$-free graphs of given order and size, 
\emph{Linear Algebra Appl.} {\bfseries 430} (2009) 2898--2905.

\bibitem{Nikiforov2010} V. Nikiforov, 
The spectral radius of graphs without paths and cycles of specified length, 
\emph{Linear Algebra Appl.} {\bfseries 432}\,(9) (2010) 2243--2256.

\bibitem{Nikiforov2010-2} V. Nikiforov, 
A contribution to the Zarankiewicz problem, 
\emph{Linear Algebra Appl.} {\bfseries 432}\,(6) (2010) 1405--1411. 

\bibitem{Nikiforov2011} V. Nikiforov, 
Some new results in extremal graph theory, in Surveys in Combinatorics, 
Cambridge University Press, Cambridge, 2011, pp.\,141--181.

\bibitem{Nikiforov2014}
V. Nikiforov, Analytic methods for uniform hypergraphs,
\emph{Linear Algebra Appl.} {\bfseries 457} (2014) 455--535.

\bibitem{Pikhurko2013} O. Pikhurko, 
Exact computation of the hypergraph Tur\'an function for expanded complete $2$-graphs, 
\emph{J. Combin. Theory Ser. B} {\bfseries 103} (2013) 220--225.

\bibitem{She-Fan-Kang-Hou2023} 
C. She, Y. Fan, L. Kang, Y. Hou, Linear spectral Tur\'an problems for 
expansions of graphs with given chromatic number, preprint available at 
\href{https://arxiv.org/abs/2211.13647}{arXiv: 2211.13647}, 2023.

\bibitem{Sidorenko1987} A. Sidorenko, 
On the maximal number of edges in a uniform hypergraph that does not contain prohibited subgraphs, 
\emph{Math. Notes} {\bfseries 41} (1987) 247--259. 

\bibitem{Sidorenko1995} A. Sidorenko, 
What we know and what we do not know about Tur\'an numbers, 
\emph{Graphs Combin.} {\bfseries 11} (1995) 179--199.

\bibitem{Simonovits1968} M. Simonovits, 
A method for solving extremal problems in graph theory, stability problems, 
in: Theory of Graphs (Proc. Colloq., Tihany, 1966), 279--319, Academic
Press, New York, 1968.

\bibitem{Turan1941} P. Tur\'an, 
On an extremal problem in graph theory (in Hungarian), 
\emph{Mat. Fiz. Lapok} {\bfseries 48} (1941) 436--452.

\bibitem{Turan1961} P. Tur\'an, 
Research problems, 
\emph{Magyar Tud. Akad. Mat. Kutat\'o Int. K\H{o}zl.} {\bfseries 6}\,(1961) 417--423.

\bibitem{Wang-Kang-Xue2023} J. Wang, L. Kang, Y. Xue,
On a conjecture of spectral extremal problems,
\emph{J. Combin. Theory Ser. B} {\bfseries 159} (2023) 20--41.

\bibitem{Wang-Yu2023} W. Wang, L. Yu, 
The upper bound of the spectral radius for the hypergraphs without Berge-graphs,
preprint available at \href{https://arxiv.org/abs/2312.00368}{arXiv: 2312.00368}, 2023.

\bibitem{Zhai-Wang2012} M. Zhai, B. Wang, 
Proof of a conjecture on the spectral radius of $C_4$-free graphs, 
\emph{Linear Algebra Appl.} {\bfseries 437} (2012) 1641--1647.

\bibitem{Zhou-Yuan-Wang2024} J. Zhou, X. Yuan, W. Wang,
A stability result for Berge-$K_{3,t}$ $r$-graphs and its applications,
preprint available at \href{https://arxiv.org/abs/2311.17573}{arXiv: 2311.17573}, 2023.

\end{thebibliography}
\end{document}